\def\ga{\mathfrak{a}}
\def\ge{\mathfrak{e}}
\def\gf{\mathfrak{f}}
\def\gg{\mathfrak{g}}
\def\gh{\mathfrak{h}}
\def\gk{\mathfrak{k}}
\def\gl{\mathfrak{l}}
\def\gm{\mathfrak{m}}
\def\gn{\mathfrak{n}}
\def\go{\mathfrak{o}}
\def\gp{\mathfrak{p}}
\def\gs{\mathfrak{s}}
\def\gt{\mathfrak{t}}
\def\gu{\mathfrak{u}}
\def\gv{\mathfrak{v}}
\def\gz{\mathfrak{z}}
\def\C{\mathbb{C}}
\def\H{\mathbb{H}}
\def\O{\mathbb{O}}
\def\Q{\mathbb{Q}}
\def\R{\mathbb{R}}
\def\Z{\mathbb{Z}}
\def\cC{\mathcal{C}}
\def\cH{\mathcal{H}}
\def\cO{\mathcal{O}}
\def\Im{{\rm Im}\,}
\def\Span{{\rm Span}\,}
\def\Ad{{\rm Ad}\,}
\def\ad{{\rm ad}\,}
\def\Pf{{\rm Pf}\,}
\def\rank{{\rm rank}\,}
\def\Ind{{\rm Ind\,}}
\def\tr{{\rm trace\,}}
\newtheorem{theorem}[equation]{Theorem}
\newtheorem{lemma}[equation]{Lemma}
\newtheorem{proposition}[equation]{Proposition}
\newtheorem{definition}[equation]{Definition}
\def\sideremark#1{\ifvmode\leavevmode\fi\vadjust{\vbox to0pt{\vss
 \hbox to 0pt{\hskip\hsize\hskip1em
\vbox{\hsize2cm\tiny\raggedright\pretolerance10000 
 \noindent #1\hfill}\hss}\vbox to8pt{\vfil}\vss}}} 
\title{Plancherel Formulae associated to Filtrations of Nilpotent Lie Groups}
\author{Joseph A. Wolf}
\begin{document}

\maketitle

\abstract{We study the conditions for a nilpotent Lie group to be foliated
into subgroups that have square integrable (relative discrete series)
unitary representations, that fit together to form a filtration by normal
subgroups.  Then we use that filtration
to construct a class of ``stepwise square integrable'' representations on
which Plancherel measure is concentrated.  Further,
we work out the character formulae for those stepwise square integrable
representations, and we give an explicit Plancherel formula.  Next, we
use some structure theory to check that all these constructions and results
apply to nilradicals of minimal parabolic subgroups of real reductive Lie
groups.  Finally, we develop multiplicity formulae for compact quotients
$N/\Gamma$ where $\Gamma$ respects the filtration.}

\section{Introduction}
There is a well developed theory of square integrable representations
of nilpotent Lie groups \cite{MW1973}.  It is, of course, based on
the general representation theory \cite{K1962} for nilpotent Lie
groups.  A connected simply connected Lie group $N$
with center $Z$ is called {\em square integrable} if it has unitary
representations $\pi$ whose coefficients $f_{u,v}(x) = 
\langle u, \pi(x)v\rangle$ satisfy $|f_{u,v}| \in L^2(N/Z)$.  If $N$ has one 
such square integrable representation then there is a certain polynomial
function $\Pf(\lambda)$ on the linear dual space $\gz^*$ of the Lie algebra of
$Z$ that is key to harmonic analysis on $N$.  Here $\Pf(\lambda)$ is the
Pfaffian of the antisymmetric bilinear form on $\gn / \gz$ given by
$b_\lambda(x,y) = \lambda([x,y])$.  The square integrable
representations of $N$ are certain easily--constructed representations
$\pi_\lambda$ where $\lambda \in \gz^*$ with $\Pf(\lambda) \ne 0$,
Plancherel  almost irreducible unitary representations of $N$ are square
integrable, and up to an explicit constant 
$|\Pf(\lambda)|$ is the Plancherel density of the unitary
dual $\widehat{N}$ at $\pi_\lambda$.  
\medskip

This theory has proved to have serious analytic consequences \cite{W2007}.  
\medskip

In this paper we present an extension of that theory.  Under certain
conditions, the nilpotent Lie group $N$ has a particular decomposition into 
subgroups that have square integrable representations, and the Plancherel 
formula then is synthesized explicitly in terms of the Plancherel formulae
of those subgroups.  Many of our calculations are on the Lie algebra level,
reflecting the setting of the square integrability conditions mentioned above.
\medskip

The guiding example for these decompositions is that of the upper
triangular real matrices.  We go through it separately in Section 
\ref{up-triang-decomp} for the decompositions, Section \ref{up-triang-l2}
for the square integrability, and Section \ref{up-triang-planch}
for the Plancherel formula.  We separate out the upper triangular
groups for two reasons.  First, they give a clear illustration of the 
technical conditions that we need  more generally to decompose the group
and to reconstitute the Plancherel formula.  Second, and equally important, 
they appear in many situations and are of independent interest.  
The precise conditions are given in (\ref{setup}).
\smallskip

The decompositions for upper triangular matrices 
were suggested by the work of Mar\'\i a L. Barberis and Isabel G. Dotti
on abelian complex structures; see \cite{BD2001}, \cite{BD2004} and 
\cite{Ka}.
I thank them for discussions of their work in progress \cite{BD2011}
on those structures.
\medskip

The general formulation and the resulting Plancherel formulae are the 
content of Section \ref{general_theory}.  There we extend
the material of Sections \ref{up-triang-decomp}, \ref{up-triang-l2} 
and \ref{up-triang-planch} to a much more general setting.  
\medskip

In Section \ref{iwasawa} we verify the conditions of Section 
\ref{general_theory} for the unipotent radicals of parabolic subgroups of
real semisimple Lie groups.  This has many potential applications in
differential geometry, in hypoelliptic differential equations, and in
harmonic analysis.  For example, in the case of cuspidal parabolics, it
has the potential of simplifying some of the integrations
in Harish--Chandra's theory of the constant term.  These unipotent
radical examples also appear in many other geometric and analytic settings.
\medskip

Finally, in Section \ref{quotients} we consider the case where  our connected
simply connected nilpotent Lie group $N$ has a discrete co-compact
subgroup $\Gamma$ that fits into the pattern of Section \ref{general_theory}.
We show that the compact nilmanifold $N/\Gamma$ has a
corresponding foliation and derive analytic results analogous to those
of Theorem \ref{plancherel-general}.
These results include multiplicity formulae for stepwise square integrable
representations as summands of the regular representation 
$\Ind_\Gamma^N(1_\Gamma)$
of $N$ on $L^2(N/\Gamma)$. They apply in particular to the nilradicals
of minimal parabolic subgroups, as studied in Section \ref{iwasawa}.
\medskip

Section \ref{appendix} is an appendix demonstrating the root structure of
the key technical tool, Lemma \ref{layers-nilpotent}, behind
Theorem \ref{iwasawa-layers}.  I expect that it will only appear in the
arXiv version of this paper.

\section{Decomposition of Upper Triangular Matrices}
\label{up-triang-decomp}
\setcounter{equation}{0}

Let $\gn$ denote the real Lie algebra of 
$\ell \times \ell$ matrices with zeroes
on and below the diagonal, and let $N$ the corresponding 
unipotent Lie group of $\ell \times \ell$ matrices with zeroes below the 
diagonal and ones on the diagonal.
\medskip

As usual $e_{i,j} \in \gn$ denotes the matrix with $1$ in row $i$ column $j$ 
and zeroes elsewhere, so $\gn$ is the span of 
$\{e_{i,j} \mid 1 \leqq i < j \leqq \ell\}$.
Here $[e_{i,j}, e_{m,n}]$ is $e_{i,n}$ if $i < j = m < n$, $-e_{m,j}$ if
$m < n = i < j$, $0$ in all other cases.  Thus, for 
$1 \leqq r \leqq [\tfrac{\ell}{2}]$, we define
\begin{equation}\label{2.1}
\begin{aligned}
\gm_r :=& \Span\{e_{r,s} \mid r+1 \leqq s \leqq \ell - r\} 
	\cup \Span\{e_{q,\ell - r + 1} \mid r+1 \leqq q \leqq \ell - r \}
	\cup e_{r,\ell - r + 1}\R \text{ and } \\
\gn_r :=& \gm_1+ \gm_{2} + \dots + \gm_r
       = \Span \{e_{i,j} \mid i \leqq r \leqq \ell - r + 1 < j\}.
\end{aligned}
\end{equation}
Then $\gm_r$ is a subalgebra of $\gn$ that is isomorphic to the Heisenberg 
algebra $\gh_{\ell - 2r}$ of dimension $2(\ell - 2r) + 1$; it has center 
$\gz_r := e_{r,\ell - r + 1}\R$.  Note that
\begin{equation}\label{mm}
[\gm_i , \gm_j] \subset \gn_{min(i,j)}\,, \text{ so each }
\gn_r \text{ is an ideal in } \gn\,.
\end{equation}
In particular we have semidirect sum decompositions
\begin{equation}\label{2.3}
\gn_r = \gn_{r-1} \subsetplus \gm_r
\end{equation}
and a filtration
\begin{equation}\label{2.4}
\gn_1 \subset \gn_2 \subset \dots \subset \gn_{[\ell / 2]} = \gn
\end{equation}
by ideals.
\medskip

Fix the positive definite inner product on $\gn$ in which the $e_{i,j}$ are
orthonormal.  Define $J_r : \gn \to \gn$ by 
$\langle J_r(x), y \rangle = \langle e_{r,\ell - r + 1}, [x,y]\rangle$.  Its
image is the non-central part 
\begin{equation}
\Span\{e_{r,s} \mid r+1 \leqq s \leqq \ell - r\}
        \cup \Span\{e_{q,\ell - r + 1} \mid r+1 \leqq q \leqq \ell - r \}
        = \gm_r \cap \gv_r
\end{equation} 
of $\gm_r$ and its kernel is the central part $\gz_r = e_{r,\ell - r + 1}\R$.
That is the connection with abelian complex structures mentioned in the
Introduction.
\medskip

Group structure here follows algebra structure immediately, as all the 
groups are
unipotent and thus equal to the exponential image of their Lie algebras.
Thus we have closed connected subgroups $M_r \cong H_{\ell - 2r}$ and
normal closed connected subgroups
$N_r = M_1 M_2 \dots M_r$ in $N$, and semidirect product decompositions
$N_r = N_{r-1} \rtimes M_r$.

\section{Square Integrability for Upper Triangular Matrices}
\label{up-triang-l2}
\setcounter{equation}{0}
Now we cascade down antidiagonally from the upper right hand
corner. For convenience let 
$m := [\tfrac{\ell}{2}]$.  If $r < m$ them $M_r$ is the Heisenberg
group of dimension $2(\ell - 2r) + 1$.
If $\ell$ is odd then $M_m$ is the $3$--dimensional Heisenberg
group, and if $\ell$ is even then $M_m \cong \R$ is the $1$--dimensional vector
group.  The point is that Plancherel-almost-every irreducible unitary 
representation of $M_r$ is a representation $\pi_{\lambda_r}$
specified by a nonzero linear functional $\lambda_r$ on the
center $\gz_r$ of $\gm_r$\,,
and that representation has matrix coefficients in $L^2(M_r/Z_r)$.
Write $\cH_{\pi_{\lambda_r}}$ for the representation space, or just
$\cH_r$ if there is no chance of confusion.
\medskip

Let $\lambda = \lambda_1 + \dots + \lambda_m$ where $0 \ne \lambda_r \in \gz_r$
for $1 \leqq r \leqq m$.  We are going to put together the square
integrable representations $\pi_{\lambda_r} \in \widehat{M_r}$ to form a
representation $\pi_r \in \widehat{N}$.  This will be a recursion on $r$
and we will need the
$$
S_r = Z_1 Z_2 \dots Z_r = S_{r-1} \times Z_r
$$
for that recursive construction.
\medskip
\begin{lemma}\label{norm}
$M_r$ centralizes $S_{r-1}$.
\end{lemma}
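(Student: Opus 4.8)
The plan is to reduce the claim to a purely Lie-algebra computation, since all the groups in sight are exponentials of their Lie algebras. Because $S_{r-1} = Z_1 Z_2 \cdots Z_{r-1}$ and each $Z_i = \exp(\gz_i)$ with $\gz_i = e_{i,\ell-i+1}\R$, the subgroup $S_{r-1}$ is $\exp(\gs_{r-1})$ where $\gs_{r-1} = \gz_1 + \cdots + \gz_{r-1}$. Likewise $M_r = \exp(\gm_r)$. By the Baker--Campbell--Hausdorff formula in a nilpotent group, $\exp(x)$ commutes with $\exp(z)$ as soon as $[x,z] = 0$ (this already forces all higher BCH brackets to vanish). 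So it suffices to show $[\gm_r, \gz_i] = 0$ for every $i < r$; equivalently, $[\gm_r, \gs_{r-1}] = 0$.

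First I would recall the generators of $\gm_r$ from (\ref{2.1}): they are the $e_{r,s}$ with $r+1 \leqq s \leqq \ell-r$, the $e_{q,\ell-r+1}$ with $r+1 \leqq q \leqq \ell-r$, and $e_{r,\ell-r+1}$ itself. Fix $i < r$, so the relevant central generator is $e_{i,\ell-i+1}$. Now I would run through the three types of generator of $\gm_r$ and apply the bracket rule for matrix units quoted just before (\ref{2.1}): $[e_{a,b},e_{c,d}]$ is $e_{a,d}$ if $b=c$ (and the index pattern is increasing), $-e_{c,b}$ if $d=a$, and $0$ otherwise. For $[e_{r,s}, e_{i,\ell-i+1}]$ to be nonzero we would need either $s = i$ or $\ell-i+1 = r$. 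But $s \geqq r+1 > r > i$, so $s \neq i$; and $\ell - i + 1 > \ell - r + 1 > r$ since $i < r$, so $\ell-i+1 \neq r$. Hence this bracket vanishes. The same index bookkeeping handles the other two generator types: for $[e_{q,\ell-r+1}, e_{i,\ell-i+1}]$ one checks $\ell-r+1 \neq i$ (the left index is $\geqq r+1 > i$... wait, $\ell-r+1$ is the column, which is $> r > i$) and $\ell-i+1 \neq q$ (since $q \leqq \ell-r < \ell - i < \ell-i+1$); for $[e_{r,\ell-r+1}, e_{i,\ell-i+1}]$ one checks $\ell-r+1 \neq i$ and $\ell-i+1 \neq r$, both clear from $i<r$. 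In every case the bracket is $0$, so $[\gm_r,\gz_i]=0$ for all $i<r$, hence $[\gm_r,\gs_{r-1}]=0$.

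Alternatively, and perhaps more cleanly, one can avoid the case analysis by invoking (\ref{mm}): $[\gm_r,\gm_i] \subset \gn_{\min(i,r)} = \gn_i$ for $i<r$, and $\gn_i = \Span\{e_{a,b} \mid a \leqq i \leqq \ell-i+1 < b\}$ has trivial intersection with $\gz_i = e_{i,\ell-i+1}\R$ only if... actually this does not immediately finish it, so the direct matrix-unit computation above is the safer route; the only real care needed is keeping the inequalities $i<r$, $s\geqq r+1$, $q\leqq \ell-r$ straight. Finally, translating back: $[\gm_r, \gs_{r-1}] = 0$ gives, via BCH, that $\exp(\gm_r) = M_r$ centralizes $\exp(\gs_{r-1}) = S_{r-1}$, which is the assertion. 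The main obstacle is nothing conceptual — it is simply making sure the index ranges in (\ref{2.1}) are handled without an off-by-one slip, particularly the boundary behavior when $\ell$ is even and $M_m \cong \R$, where $\gm_m$ has no non-central part but the argument above still applies verbatim since there is simply one fewer generator type to check.
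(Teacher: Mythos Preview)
Your proof is correct and follows essentially the same approach as the paper's: reduce to the Lie algebra level and verify that every matrix unit $e_{u,v}\in\gm_r$ commutes with each $e_{j,\ell+1-j}$ for $j<r$ by index bookkeeping. The paper's version is simply more terse, handling all three generator types at once with the single observation that any such $e_{u,v}$ satisfies $v>j$ and $u<\ell+1-j$.
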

\begin{proof} The Lie algebra $\gs_{r-1}$ is spanned by the $e_{j,\ell + 1 -j}$
for $j = 1, \dots , r-1$.  Let $e_{u,v} \in \gm_r$.  Then
$[e_{u,v},e_{j,\ell + 1 -j}] = 0$ because $v > j$ and $u < \ell + 1 -j$.
\end{proof}

Express $N_2$ as the semidirect product $N_1\rtimes M_2$.  
Plancherel-almost-every irreducible unitary representation of $N_1 = M_1$
is a representation $\pi_{\lambda_1}$ specified by a nonzero linear functional 
$\lambda_1 \in \gz_1^*$.  View $\lambda_1$ as an element of $\gn_1^*$
that vanishes on the non-central matrices $e_{i,j}$ in $\gn_1$.
Choose an invariant polarization $\gp_1' \subset \gn_2$ for the linear
functional $\lambda_1' \in \gn_2^*$ that agrees with $\lambda_1$ on $\gn_1$
and vanishes on $\gm_2$.  Lemma \ref{norm} implies 
$\ad^*(\gm_2)(\lambda_1')|_{\gz_1 + \gm_2} = 0$,
so  $\gp_1' = \gp_1 + \gm_2$ where $\gp_1$ is an 
invariant polarization for the linear functional $\lambda_1 \in \gn_1^*$.
The associated representations
are $\pi_{\lambda_1'} \in \widehat{N_2}$ and $\pi_{\lambda_1} \in
\widehat{N_1}$.  Note that $N_2/P_1' = N_1/P_1$\,,
so the representation spaces $\cH_{\pi_{\lambda_1'}} = L^2(N_2/P_1')
= L^2(N_1/P_1) = \cH_{\pi_{\lambda_1}}$.
In other words, $\pi_{\lambda_1'}$ extends $\pi_{\lambda_1}$ to a
unitary representation of $N_2$ on the same Hilbert space
$\cH_{\pi_{\lambda_1}}$, and $d\pi_{\lambda_1'}(\gz_2) = 0$.
Now the Mackey Little Group method gives us

\begin{lemma}\label{ext-1-2}
The irreducible unitary representations of $N_2$, whose restrictions to
$N_1$ are multiples of $\pi_{\lambda_1}$, are the 
$\pi_{\lambda_1'} \widehat\otimes\gamma$ where $\gamma \in \widehat{M_2}
= \widehat{N_2/N_1}$\,.
\end{lemma}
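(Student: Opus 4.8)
The plan is to apply the Mackey machine for the semidirect product $N_2 = N_1 \rtimes M_2$, using the preparatory facts already established in the preceding paragraph. First I would observe that $N_1 = M_1$ is abelian-by-nothing in the relevant sense: concretely, $\pi_{\lambda_1}$ is an irreducible representation of $N_1$, and the group $M_2 = N_2/N_1$ acts on $\widehat{N_1}$ by conjugation. The key structural input is Lemma \ref{norm} with $r = 2$, which says $M_2$ centralizes $S_1 = Z_1$; passing to coadjoint orbits this gives $\ad^*(\gm_2)(\lambda_1')|_{\gz_1 + \gm_2} = 0$, so $\lambda_1 \in \gz_1^*$ is fixed by the $M_2$-action on $\widehat{N_1}$. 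Hence the little group (stabilizer) of $\pi_{\lambda_1}$ in $M_2$ is all of $M_2$.

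Next I would invoke the extension $\pi_{\lambda_1'} \in \widehat{N_2}$ constructed just above the lemma statement: it acts on the same Hilbert space $\cH_{\pi_{\lambda_1}} = L^2(N_2/P_1') = L^2(N_1/P_1)$, restricts to $N_1$ as $\pi_{\lambda_1}$, and satisfies $d\pi_{\lambda_1'}(\gz_2) = 0$. Because the little group is the full complement $M_2$ and $\pi_{\lambda_1}$ extends to $N_2$, the Mackey normal-subgroup analysis applies in its cleanest form: every irreducible unitary representation of $N_2$ whose restriction to $N_1$ is a multiple of $\pi_{\lambda_1}$ is obtained by tensoring the fixed extension $\pi_{\lambda_1'}$ with an irreducible representation $\gamma$ of $M_2 = N_2/N_1$, pulled back along the quotient map $N_2 \to N_2/N_1$. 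Conversely, each such $\pi_{\lambda_1'} \widehat\otimes \gamma$ is irreducible (since $\pi_{\lambda_1'}|_{N_1}$ is already irreducible and $\gamma$ is irreducible on the quotient) and restricts to $N_1$ as $(\dim \gamma)$ copies of $\pi_{\lambda_1}$ — a multiple as claimed. The identification $\widehat{M_2} = \widehat{N_2/N_1}$ is immediate from the semidirect decomposition $N_2 = N_1 \rtimes M_2$.

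The only point requiring care — the main (mild) obstacle — is verifying that the little group really is all of $M_2$ and that this forces the stated description rather than a twisted/projective one. This is exactly what the preceding paragraph arranges: Lemma \ref{norm} guarantees $\lambda_1$ is $M_2$-fixed, and the explicit construction of $\pi_{\lambda_1'}$ provides a genuine (non-projective) extension to $N_2$, so the Mackey obstruction class in $H^2(M_2, \T)$ vanishes. With those two ingredients in hand the lemma is a direct citation of the Mackey Little Group theorem, and no further computation is needed.
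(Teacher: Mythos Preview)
Your proposal is correct and takes essentially the same approach as the paper: the paper simply writes ``Now the Mackey Little Group method gives us'' Lemma~\ref{ext-1-2}, relying on the preceding paragraph for exactly the two ingredients you isolate (Lemma~\ref{norm} to fix $\pi_{\lambda_1}$ under $M_2$, and the explicit extension $\pi_{\lambda_1'}$ to trivialize the Mackey obstruction). You have unpacked that one-line citation accurately; there is nothing to add.
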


Given nonzero $\lambda_1 \in \gz_1^*$ and $\lambda_2 \in \gz_2^*$ we have 
representations $\pi_{\lambda_1} \in \widehat{M_1}$ and 
$\pi_{\lambda_2} \in \widehat{M_2}$ with coefficients in $L^2(M_1/Z_1)$
and $L^2(M_2/Z_2)$ respectively.  Using the notation of Lemma \ref{ext-1-2} 
we define
\begin{equation}
\pi_{\lambda_1 + \lambda_2} \in \widehat{N_2} \text{ by }
\pi_{\lambda_1 + \lambda_2} 
= \pi'_{\lambda_1} \widehat\otimes \pi_{\lambda_2}\,.
\end{equation}
We now use the square integrability of $\pi_{\lambda_1}$ and
$\pi_{\lambda_2}$ for some square integrability of 
$\pi_{\lambda_1 + \lambda_2}$\,.

\begin{proposition}\label{L2.1}
The coefficients $f_{z,w}(xy)= \langle z, 
\pi_{\lambda_1 + \lambda_2}(xy)w\rangle$ of $\pi_{\lambda_1 + \lambda_2}$ 
are in $L^2(N_2/S_2)$, in fact satisfy 
$
||f_{z,w}||^2_{L^2(N_r/S_r)} = \tfrac{||z||^2 ||w||^2}{\deg(\pi_{\lambda_1})
        \dots \deg(\pi_{\lambda_r})}\,.
$
\end{proposition}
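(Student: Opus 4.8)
The plan is to compute the coefficient $f_{z,w}$ explicitly in terms of the tensor-product structure $\pi_{\lambda_1+\lambda_2} = \pi'_{\lambda_1}\widehat\otimes\pi_{\lambda_2}$, decompose the integral over $N_2/S_2$ into an iterated integral over $M_2/Z_2$ and $N_1/S_1 = M_1/Z_1$, and apply the known square-integrability norms of $\pi_{\lambda_1}$ and $\pi_{\lambda_2}$ on each factor. First I would use the semidirect product $N_2 = N_1\rtimes M_2$ to write a general group element as $xy$ with $x\in N_1$, $y\in M_2$, and use Lemma \ref{norm} (so that $d\pi_{\lambda_1'}(\gz_2)=0$ and $\pi'_{\lambda_1}$ is trivial on the center directions of $M_2$) together with the definition of the tensor product to expand $\langle z,\pi_{\lambda_1+\lambda_2}(xy)w\rangle$ on the Hilbert space $\cH_{\pi_{\lambda_1}}\widehat\otimes\cH_{\pi_{\lambda_2}}$.

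The key computational step is to take $z = z_1\otimes z_2$ and $w = w_1\otimes w_2$ elementary tensors (the general case follows by bilinearity and a density/orthogonality argument, since the cross terms integrate to zero over the appropriate quotient). For such tensors the coefficient factors, roughly, as a product of a coefficient of $\pi'_{\lambda_1}$ evaluated along $N_1$ and twisted by the $M_2$-action, times a coefficient of $\pi_{\lambda_2}$ along $M_2$. Then $\int_{N_2/S_2}|f_{z,w}(xy)|^2\,d(xy)$ becomes $\int_{M_2/Z_2}\!\!\int_{N_1/S_1}|\cdots|^2\,dx\,dy$. The inner integral over $N_1/S_1 = M_1/Z_1$ is exactly the square-integrability statement for $\pi_{\lambda_1}$ on $M_1$ (valid by the classical Moore--Wolf theory, since $\pi_{\lambda_1'}$ restricted to $N_1$ is $\pi_{\lambda_1}$ acting on the same space) and produces $\|z_1\|^2\|w_1\|^2/\deg(\pi_{\lambda_1})$ — crucially this is independent of $y$, because the $M_2$-twist acts by a unitary operator on $\cH_{\pi_{\lambda_1}}$ and hence does not change the $L^2(N_1/S_1)$-norm. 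The remaining integral over $M_2/Z_2$ is then the square-integrability of $\pi_{\lambda_2}$ on $M_2$, giving $\|z_2\|^2\|w_2\|^2/\deg(\pi_{\lambda_2})$. Multiplying, and using $\|z\|^2 = \|z_1\|^2\|z_2\|^2$, $\|w\|^2 = \|w_1\|^2\|w_2\|^2$, yields the claimed formula with $r=2$.

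I expect the main obstacle to be the bookkeeping that makes the $M_2$-twist harmless: one must check carefully that when the group element is $xy$ with $y\in M_2$, the operator $\pi_{\lambda_1+\lambda_2}(xy)$ really does split as $\big(\pi'_{\lambda_1}(x)\,U(y)\big)\otimes\pi_{\lambda_2}(y)$ for a suitable unitary $U(y)$ coming from the $M_2$-action on $\cH_{\pi_{\lambda_1}}$, and that integrating out $S_2 = S_1\times Z_2$ correctly peels off the centers in the right order. Once that is in place, Fubini and the two classical norm identities finish it. Finally, for the stated generality (arbitrary $r$, i.e. $N_r$ in place of $N_2$), the identical argument applies recursively using $N_r = N_{r-1}\rtimes M_r$ and $S_r = S_{r-1}\times Z_r$, with the inductive hypothesis supplying the $N_{r-1}/S_{r-1}$-norm $\|z\|^2\|w\|^2\big/\!\prod_{i<r}\deg(\pi_{\lambda_i})$ in place of the base computation; I would either phrase the proof as such an induction or simply remark that the $r=2$ computation is the model for the general recursion carried out in Section \ref{general_theory}.
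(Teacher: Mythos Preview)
Your proposal is correct and follows essentially the same route as the paper: reduce to elementary tensors, use the semidirect product $N_2=N_1\rtimes M_2$ and the measure decomposition $N_2/S_2 \cong (M_1/Z_1)\times(M_2/Z_2)$ to write an iterated integral, observe that the $M_2$-twist $v\mapsto \pi'_{\lambda_1}(y)v$ is unitary so the inner $M_1/Z_1$-integral is the Schur norm $\|z_1\|^2\|w_1\|^2/\deg(\pi_{\lambda_1})$ independent of $y$, and then finish with the square integrability of $\pi_{\lambda_2}$ on $M_2/Z_2$. The paper writes this out with $v_y := \pi'_{\lambda_1}(y)v$ in place of your $U(y)$, but the substance is identical, including the passage from decomposable to general vectors and the remark that the general $r$ is handled by the same recursion.
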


\begin{proof}
We write $\cH_r$ for the representation space of $\pi_{\lambda_r}$.
$\cH_1$ also is the representation space for $\pi_{\lambda_1'}$\,, so
$\pi_{\lambda_1 + \lambda_2}$ has representation space
$\cH_1 \widehat\otimes\cH_2$.  Choose nonzero vectors $u, v \in \cH_1$
and $u', v' \in \cH_2$.  We need only prove that the function
$f(x,y) = \langle u,\pi'_{\lambda_1}(xy)v\rangle
\langle u', \pi_{\lambda_2}(y)v'\rangle$, $x \in M_1$ and $y \in M_2$\,, 
satisfies 
$||f||^2_{L^2(N_2/S_2)} = 
	\left ( \frac{||u||^2 ||v||^2}{\deg(\pi_{\lambda_1})} \right )
	\left ( \frac{||u'||^2 ||v'||^2}{\deg(\pi_{\lambda_2})} \right )$,
so that the coefficients
$xy \mapsto \langle u\otimes u', \pi_{\lambda_1 + \lambda_2}(xy)(v\otimes v')
\rangle$ of decomposable vectors are in $L^2(N_2/S_2)$.   For that,
let $\{z_i\}$ and $\{w_j\}$ be complete orthonormal sets in
$\cH_1 \widehat\otimes\cH_2$. Suppose that both $\sum |a_{i,j}|^2$ and
$\sum |b_{i,j}|^2$ are finite, so $z = \sum a_{i,j}z_i\otimes w_j$
and $w = \sum b_{i,j}z_i\otimes w_j$ are general elements of
$\cH_1 \widehat\otimes\cH_2$.  Then the coefficient
$\langle x, \pi_{\lambda_1 + \lambda_2}(xy)w\rangle
= \sum a_{i,j}\overline{b_{i',j'}} \langle z_i, 
	\pi_{\lambda_1 + \lambda_2}(xy)w_j\rangle$
has square $L^2(N_2/S_2)$--norm
$\frac{1}{\deg(\pi_{\lambda_1})}\frac{1}{\deg(\pi_{\lambda_2})}
\sum |a_{i,j}|^2 \sum |b_{i',j'}|^2 =
\frac{||z||^2||w||^2}{\deg(\pi_{\lambda_1})\deg(\pi_{\lambda_2})}
< \infty$.
\medskip

In order to integrate $|f|^2$ over $N_2 = M_1M_2$ modulo $S_2 = Z_1Z_2$
we use the fact that the action of $M_2$ on $\gm_1$ is unipotent, so there
is a measure preserving decomposition
\begin{equation}\label{E1}
N_2/S_2 = \left ( M_1/Z_1 \right )
	\times \left ( N_2/Z_2 \right ).
\end{equation}
Using the extension
of Schur Orthogonality to representations with coefficients that are
square integrable modulo the center of the group, and writing $v_y$ for 
$\pi_{\lambda'_1}(y)v$, we compute
$$
\begin{aligned}
||f||^2_{L^2(N_2/S_2)} &= 
	\int_{N_2/S_2} 
	  |\langle u,\pi'_{\lambda_1}(xy)v\rangle|^2
	  |\langle u', \pi_{\lambda_2}(y)v'\rangle|^2 
	  d(xyZ_1 Z_2) \\
&= \int_{M_2/Z_2} |\langle u', \pi_{\lambda_2}(y)v'\rangle|^2
   \left ( \int_{M_1/Z_1}
	|\langle u,\pi_{\lambda'_1}(xy)v\rangle|^2
	d(xZ_1) \right ) d(yZ_2) \\
&= \int_{M_2/Z_2} |\langle u', \pi_{\lambda_2}(y)v'\rangle|^2
   \left ( \int_{M_1/Z_1}
        |\langle u, \pi_{\lambda'_1}(x)v_y\rangle|^2
	d(xZ_1) \right ) d(yZ_2) \\
&= \int_{M_2/Z_2} |\langle u', \pi_{\lambda_2}(y)v'\rangle|^2
   \left ( \int_{M_1/Z_1}
	|\langle u, \pi_{\lambda_1}(x)v_y\rangle|^2
        d(xZ_1) \right ) d(yZ_2) \\
&= \int_{M_2/Z_2} |\langle u', \pi_{\lambda_2}(y)v'\rangle|^2
	\tfrac{||u||^2 ||v_y||^2}{\deg(\pi_{\lambda_1})} d(yZ_2)\\
&= \left ( \frac{||u||^2 ||v_y||^2}{\deg(\pi_{\lambda_1})} \right )
	\left ( \frac{||u'||^2 ||v'||^2}{\deg(\pi_{\lambda_2})} \right ) =
  \left ( \frac{||u||^2 ||v||^2}{\deg(\pi_{\lambda_1})} \right )
	\left ( \frac{||u'||^2 ||v'||^2}{\deg(\pi_{\lambda_2})} \right )\\
&= \frac{||u\otimes u'||^2 ||v \otimes v'||^2}{\deg(\pi_{\lambda_1})
	\deg(\pi_{\lambda_2})} < \infty.
\end{aligned}
$$
That completes the proof of Proposition \ref{L2.1}.
\end{proof}

Proposition \ref{L2.1} starts our recursive construction.  More
generally, $N_{r+1}$ is the semidirect product $N_r \rtimes M_{r+1}$.
We fix nonzero $\lambda_i \in \gz_i^*$ for $1 \leqq i \leqq r+1$,
and we start with the representation $\pi_{\lambda_1 + \dots + \lambda_r}$
constructed step by step from the square integrable representations
$\pi_{\lambda_i} \in \widehat{M_i}$ for $1 \leqq i \leqq r$.  The
representation space $\cH_{\pi_{\lambda_1 + \dots + \lambda_r}} =
\cH_{\pi_{\lambda_1}} \widehat\otimes \dots \cH_{\pi_{\lambda_r}}$.  The
coefficients of $\pi_{\lambda_1 + \dots + \lambda_r}$ have absolute
value in $L^2(N_r/S_r)$.  In fact they satisfy
$$
||f_{z,w}||^2_{L^2(N_r/S_r)} = \tfrac{||z||^2 ||w||^2}{\deg(\pi_{\lambda_1})
        \dots \deg(\pi_{\lambda_r})}\,.
$$
Then $\pi_{\lambda_1 + \dots + \lambda_r}$ extends to a representation
$\pi'{\lambda_1 + \dots + \lambda_r}$ of $L_{r+1}$ on the same Hilbert
space $\cH_{\pi_{\lambda_1 + \dots + \lambda_r}}$, and it satisfies
$d\pi'_{\lambda_1 + \dots + \lambda_r}(\gz_{r+1}) = 0$.  Thus, as in
Lemma \ref{ext-1-2},

\begin{lemma}\label{ext-r}
The irreducible unitary representations of $N_{r+1}$, whose restrictions to
$N_r$ are multiples of $\pi_{\lambda_1 + \dots + \lambda_r}$, are the
$\pi'_{\lambda_1 + \dots + \lambda_r} \widehat\otimes\gamma$ where 
$\gamma \in \widehat{M_{r+1}} = \widehat{N_{r+1}/N_r}$\,.
\end{lemma}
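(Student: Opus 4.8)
The plan is to apply the Mackey Little Group method to the semidirect product $N_{r+1} = N_r \rtimes M_{r+1}$, exactly as was done for $N_2 = N_1 \rtimes M_2$ in Lemma \ref{ext-1-2}, with the inductive hypothesis supplying the analogue of the base case. First I would record what has already been set up: the representation $\pi_{\lambda_1 + \dots + \lambda_r} \in \widehat{N_r}$ has been extended to a representation $\pi'_{\lambda_1 + \dots + \lambda_r}$ of $N_{r+1}$ on the same Hilbert space $\cH_{\pi_{\lambda_1 + \dots + \lambda_r}}$, with $d\pi'_{\lambda_1 + \dots + \lambda_r}(\gz_{r+1}) = 0$. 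In particular the $M_{r+1}$-orbit (under the coadjoint-type action on $\widehat{N_r}$) of the class $[\pi_{\lambda_1 + \dots + \lambda_r}]$ is a single point, i.e.\ this class is fixed by the action of $M_{r+1} = N_{r+1}/N_r$, and its stabilizer (the "little group") is all of $M_{r+1}$.

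The key steps, in order: (1) Given an irreducible unitary representation $\sigma$ of $N_{r+1}$ whose restriction to $N_r$ is a multiple of $\pi_{\lambda_1+\dots+\lambda_r}$, Mackey's theory (for the semidirect product with $N_r$ normal) says $\sigma$ is determined by a projective representation of the little group on the multiplicity space; since the little group is $M_{r+1}$ and the extension $\pi'_{\lambda_1+\dots+\lambda_r}$ exists, the relevant cocycle is trivial, so the multiplicity-space representation is an honest representation $\gamma$ of $M_{r+1}$, and irreducibility of $\sigma$ forces $\gamma \in \widehat{M_{r+1}}$. (2) Conversely, for each $\gamma \in \widehat{M_{r+1}}$, form $\pi'_{\lambda_1+\dots+\lambda_r} \widehat\otimes \gamma$ (with $\gamma$ pulled back via $N_{r+1} \to N_{r+1}/N_r = M_{r+1}$); this is a unitary representation of $N_{r+1}$ on $\cH_{\pi_{\lambda_1+\dots+\lambda_r}} \widehat\otimes \cH_\gamma$ whose restriction to $N_r$ is $(\dim \gamma)$-fold copy of $\pi_{\lambda_1+\dots+\lambda_r}$, and Mackey's irreducibility criterion gives that it is irreducible. (3) Distinct $\gamma$ give inequivalent $\sigma$, so this is the complete, non-redundant list. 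The only genuine input beyond Mackey's machinery is the existence of the extension $\pi'_{\lambda_1+\dots+\lambda_r}$, which is exactly what was arranged just above the statement, together with the observation $d\pi'_{\lambda_1+\dots+\lambda_r}(\gz_{r+1})=0$ guaranteeing the tensor factor $\gamma$ can be any representation of $M_{r+1}$ rather than only those with a prescribed central character.

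The main obstacle is really just bookkeeping rather than a deep difficulty: one must be careful that the "little group" for the class $[\pi_{\lambda_1+\dots+\lambda_r}]$ under the $M_{r+1}$-action on $\widehat{N_r}$ is genuinely all of $M_{r+1}$ and not merely a subgroup. This follows because $M_{r+1}$ acts on $N_r$ by unipotent automorphisms fixing $\gs_r$ pointwise (the same kind of computation as in Lemma \ref{norm}, with the roles of the layers shifted), so it fixes the central data $\lambda_1 + \dots + \lambda_r$ and hence the equivalence class of $\pi_{\lambda_1+\dots+\lambda_r}$; the existence of the extension $\pi'_{\lambda_1+\dots+\lambda_r}$ then confirms the cocycle obstruction vanishes. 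Once that point is in place the argument is a verbatim repetition of the $r=1$ case, so I would phrase the proof as "this is identical to the proof of Lemma \ref{ext-1-2}, with $N_r$, $M_{r+1}$, $\pi_{\lambda_1+\dots+\lambda_r}$, $\pi'_{\lambda_1+\dots+\lambda_r}$ in place of $N_1$, $M_2$, $\pi_{\lambda_1}$, $\pi_{\lambda_1'}$."
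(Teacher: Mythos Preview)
Your proposal is correct and matches the paper's approach exactly: the paper simply writes ``Thus, as in Lemma~\ref{ext-1-2}'' before stating Lemma~\ref{ext-r}, relying on the Mackey Little Group method together with the already-constructed extension $\pi'_{\lambda_1+\dots+\lambda_r}$, which is precisely the argument you have spelled out in detail. Your final sentence---that the proof is identical to that of Lemma~\ref{ext-1-2} with the obvious substitutions---is literally how the paper handles it.
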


Recall $0 \ne \lambda_{r+1} \in \gz_{r+1}^*$ and the square integrable
representation $\pi_{\lambda_{r+1}}$ of $M_{r+1} = L_{r+1}/L_r$\,.
Computing exactly as in Proposition \ref{L2.1}, we define
$\pi_{\lambda_1 + \dots + \lambda_{r+1}} = \pi'_{\lambda_1 + \dots + \lambda_r}
\widehat\otimes \pi_{\lambda_{r+1}}$ and conclude that

\begin{proposition}\label{L2.2}
The coefficients $f_{z,w}(x_1 \dots x_{r+1})= \langle z, 
\pi_{\lambda_1 + \dots + \lambda_{r+1}}(x_1x_2 \cdots x_{r+1})w\rangle$ of 
$\pi_{\lambda_1 + \dots + \lambda_{r+1}}$
are in $L^2(N_{r+1}/S_{r+1})$, in fact satisfy 
$
||f_{z,w}||^2_{L^2(N_{r+1}/S_{r+1})} = \tfrac{||z||^2 ||w||^2}
{\deg(\pi_{\lambda_1}) \dots \deg(\pi_{\lambda_{r+1}})}\,.  $
\end{proposition}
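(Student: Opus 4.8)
The plan is to argue by induction on $r$, the step $r\to r+1$ being a verbatim copy of the proof of Proposition \ref{L2.1} with the inductive hypothesis in the role there played by the square integrability of $\pi_{\lambda_1}$; the case $r=1$ of that step is Proposition \ref{L2.1} itself. So I assume the asserted norm identity for $\pi_{\lambda_1+\dots+\lambda_r}$ on $N_r/S_r$, for every choice of nonzero $\lambda_i\in\gz_i^*$. By Lemma \ref{ext-r}, $\pi_{\lambda_1+\dots+\lambda_{r+1}}=\pi'_{\lambda_1+\dots+\lambda_r}\,\widehat\otimes\,\pi_{\lambda_{r+1}}$ acts on $\cH_{\pi_{\lambda_1+\dots+\lambda_r}}\,\widehat\otimes\,\cH_{r+1}$, where $\pi'_{\lambda_1+\dots+\lambda_r}$ restricts to $\pi_{\lambda_1+\dots+\lambda_r}$ on $N_r$ and satisfies $d\pi'_{\lambda_1+\dots+\lambda_r}(\gz_{r+1})=0$, and $\pi_{\lambda_{r+1}}$ is inflated to $N_{r+1}$ through $N_{r+1}/N_r=M_{r+1}$. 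As a preliminary I would note that $\pi_{\lambda_1+\dots+\lambda_{r+1}}$ acts on $S_{r+1}=S_rZ_{r+1}$ by scalars of modulus one: on $S_r$ because $\pi'_{\lambda_1+\dots+\lambda_r}|_{S_r}$ is scalar (inductive hypothesis) while $\pi_{\lambda_{r+1}}$ is trivial on $S_r\subset N_r$, and on $Z_{r+1}$ because $\pi'_{\lambda_1+\dots+\lambda_r}$ is trivial there and $\pi_{\lambda_{r+1}}$ has scalar central character; hence the coefficient moduli descend to $N_{r+1}/S_{r+1}$ and the statement makes sense. As in Proposition \ref{L2.1} it then suffices to compute the $L^2(N_{r+1}/S_{r+1})$-norm of
\[
f(x,y)=\langle u,\pi'_{\lambda_1+\dots+\lambda_r}(xy)v\rangle\,\langle u',\pi_{\lambda_{r+1}}(y)v'\rangle,\qquad x\in N_r,\ y\in M_{r+1},
\]
for $u,v\in\cH_{\pi_{\lambda_1+\dots+\lambda_r}}$, $u',v'\in\cH_{r+1}$ (this is the coefficient of $\pi_{\lambda_1+\dots+\lambda_{r+1}}$ for the decomposable vectors $u\otimes u'$, $v\otimes v'$); arbitrary $z,w$ in the tensor product follow by expanding in an orthonormal product basis and combining bilinearity with the extension of Schur orthogonality on $N_{r+1}/S_{r+1}$, exactly as before.

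The structural input for the computation of $\|f\|^2$ is the measure-preserving identification
\[
N_{r+1}/S_{r+1}=\bigl(N_r/S_r\bigr)\times\bigl(M_{r+1}/Z_{r+1}\bigr),
\]
the analogue of (\ref{E1}). Since $N_{r+1}=N_r\rtimes M_{r+1}$ with $M_{r+1}$ acting on $N_r$ by unipotent automorphisms, Haar measure on $N_{r+1}$ is the product of those on $N_r$ and $M_{r+1}$; and since $M_{r+1}$ centralizes $S_r$ (Lemma \ref{norm}, with $r+1$ in place of $r$) while $S_{r+1}=S_r\cdot Z_{r+1}$ with $S_r\subset N_r$ and $Z_{r+1}\subset M_{r+1}$, one gets $xy\,s_1s_2=(xs_1)(ys_2)$ for $s_1\in S_r$, $s_2\in Z_{r+1}$, so the $S_{r+1}$-cosets in the product manifold $N_r\times M_{r+1}$ are products of an $S_r$-coset with a $Z_{r+1}$-coset, and the displayed decomposition is measure preserving.

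Now I would integrate $|f|^2$ by Fubini along that decomposition. Put $v_y=\pi'_{\lambda_1+\dots+\lambda_r}(y)v$; using $\pi'_{\lambda_1+\dots+\lambda_r}(xy)=\pi'_{\lambda_1+\dots+\lambda_r}(x)\pi'_{\lambda_1+\dots+\lambda_r}(y)$ and $\pi'_{\lambda_1+\dots+\lambda_r}|_{N_r}=\pi_{\lambda_1+\dots+\lambda_r}$, the inner integral is
\[
\int_{N_r/S_r}|\langle u,\pi_{\lambda_1+\dots+\lambda_r}(x)v_y\rangle|^2\,d(xS_r)=\frac{\|u\|^2\|v_y\|^2}{\deg(\pi_{\lambda_1})\cdots\deg(\pi_{\lambda_r})}=\frac{\|u\|^2\|v\|^2}{\deg(\pi_{\lambda_1})\cdots\deg(\pi_{\lambda_r})}
\]
by the inductive hypothesis together with $\|v_y\|=\|v\|$; the point is that this does not depend on $y$. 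Pulling the constant out and invoking Schur orthogonality for the square integrable representation $\pi_{\lambda_{r+1}}$ of $M_{r+1}$ modulo $Z_{r+1}$ turns the remaining $y$-integral into $\|u'\|^2\|v'\|^2/\deg(\pi_{\lambda_{r+1}})$, and the product of the two factors is $\|u\otimes u'\|^2\|v\otimes v'\|^2/(\deg(\pi_{\lambda_1})\cdots\deg(\pi_{\lambda_{r+1}}))$; in particular $f\in L^2(N_{r+1}/S_{r+1})$.

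There is no real obstacle beyond bookkeeping: everything reproduces the cascading integration of Proposition \ref{L2.1}, and the inductive hypothesis has been arranged precisely so that the inner integral is one and the same constant for every $y$, which lets the two square-integrability estimates separate cleanly. The one point genuinely needing care is the measure-preserving product decomposition of $N_{r+1}/S_{r+1}$ — it rests on the unipotence of the $M_{r+1}$-action and on Lemma \ref{norm}, and tacitly on $N_{r+1}$ and $S_{r+1}$ being unimodular, so that $N_{r+1}/S_{r+1}$ carries an invariant measure although $S_{r+1}$ need not be normal in $N_{r+1}$.
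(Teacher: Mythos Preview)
Your proposal is correct and is precisely the argument the paper intends: the paper's ``proof'' of Proposition \ref{L2.2} is the single sentence ``Computing exactly as in Proposition \ref{L2.1},'' and you have written out that computation in full, with the inductive hypothesis replacing the square integrability of $\pi_{\lambda_1}$, the decomposition $N_{r+1}/S_{r+1}=(N_r/S_r)\times(M_{r+1}/Z_{r+1})$ replacing (\ref{E1}), and Lemma \ref{norm} used in the same way.
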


Since the $M_r$ are Heisenberg groups, except that $M_m$\,, the
last one, is $1$--dimensional abelian in case the size $\ell$ of
the matrices is even, we have $\deg \pi_{\lambda_r} = |\lambda_r|^{d_r}$
where $\dim M_r = 2d_r + 1$ and $d_r = \ell -2r$.
Proposition \ref{L2.2} is the recursion step for our construction, and 
the end case $r+1 = m$ is

\begin{theorem}\label{L2}
Let $0 \ne \lambda_r \in \gz_r^*$ for $1 \leqq r \leqq m$ and set
$\lambda = \lambda_1 + \dots + \lambda_m$. 
Denote $\deg(\pi_\lambda) = \deg(\pi_{\lambda_1}) \dots \deg(\pi_{\lambda_m})$.
Then the coefficients
$f_{z,w}(x) = \langle z, \pi_\lambda(z)w\rangle$ of the irreducible
unitary representation $\pi_\lambda$ on $N$ are in $L^2(N/S)$ and
satisfy
$||f_{z,w}||^2_{L^2(N/S)} = \tfrac{||z||^2 ||w||^2}{\deg(\pi_\lambda)}
= ||z||^2 ||w||^2 /\prod |\lambda_r|^{\ell - 2r}$.  
\end{theorem}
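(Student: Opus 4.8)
The plan is to obtain this as the terminal case of the recursion already set up in Propositions \ref{L2.1} and \ref{L2.2}. First I would observe that the filtration $\gn_1 \subset \gn_2 \subset \dots \subset \gn_m = \gn$ from (\ref{2.4}), together with the semidirect decompositions $N_{r+1} = N_r \rtimes M_{r+1}$, lets us construct $\pi_\lambda := \pi_{\lambda_1 + \dots + \lambda_m}$ inductively: at each stage we have the representation $\pi_{\lambda_1 + \dots + \lambda_r}$ of $N_r$, we extend it (as in Lemma \ref{ext-r}, using Lemma \ref{norm} so that $d\pi'_{\lambda_1 + \dots + \lambda_r}(\gz_{r+1}) = 0$) to a representation $\pi'_{\lambda_1 + \dots + \lambda_r}$ of $N_{r+1}$ on the same Hilbert space, and we form $\pi_{\lambda_1 + \dots + \lambda_{r+1}} = \pi'_{\lambda_1 + \dots + \lambda_r} \,\widehat\otimes\, \pi_{\lambda_{r+1}}$. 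Applying this $m-1$ times starting from Proposition \ref{L2.1} and iterating Proposition \ref{L2.2}, the case $r+1 = m$ delivers the coefficient identity $\|f_{z,w}\|^2_{L^2(N/S)} = \|z\|^2 \|w\|^2 / \prod_{r=1}^m \deg(\pi_{\lambda_r})$ with $S = S_m = Z_1 Z_2 \cdots Z_m$.

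Next I would supply the degree computation that converts the product of degrees into $\prod_r |\lambda_r|^{\ell - 2r}$. By (\ref{2.1}), $\gm_r$ is the Heisenberg algebra $\gh_{\ell - 2r}$, so for $r < m$ the group $M_r$ is the Heisenberg group of dimension $2(\ell - 2r) + 1$; its square integrable representation $\pi_{\lambda_r}$ has formal degree $\deg \pi_{\lambda_r} = |\lambda_r|^{\ell - 2r}$ (the standard Heisenberg Plancherel density, i.e. $|\Pf(\lambda_r)|$ for the form $b_{\lambda_r}$ on $\gm_r / \gz_r$). When $\ell$ is odd, $M_m$ is the $3$-dimensional Heisenberg group and $\ell - 2m = 1$, so the same formula holds; when $\ell$ is even, $M_m \cong \R$ is $1$-dimensional abelian, $\ell - 2m = 0$, and $\deg \pi_{\lambda_m} = 1 = |\lambda_m|^0$, so the formula still holds. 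Multiplying over $r = 1, \dots, m$ gives $\deg(\pi_\lambda) = \prod_r |\lambda_r|^{\ell - 2r}$, which is exactly the claimed denominator.

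It remains to note irreducibility of $\pi_\lambda$: at each step, $\pi'_{\lambda_1 + \dots + \lambda_r} \widehat\otimes \pi_{\lambda_{r+1}}$ is irreducible by the Mackey little-group analysis of Lemma \ref{ext-r} (the restriction to $N_r$ is a multiple of the irreducible $\pi_{\lambda_1 + \dots + \lambda_r}$, and $\pi_{\lambda_{r+1}}$ is irreducible on $M_{r+1} = N_{r+1}/N_r$), so $\pi_\lambda \in \widehat N$.

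The only genuinely non-routine point is checking that the measure-preserving product decomposition (\ref{E1}) used in Proposition \ref{L2.1} really does propagate through every stage of the recursion — that is, that at step $r+1$ one has a measure-preserving identification $N_{r+1}/S_{r+1} \cong (N_r/S_r) \times (M_{r+1}/Z_{r+1})$. This follows because the action of $M_{r+1}$ on $\gn_r$ is unipotent (it lies in the nilpotent group $N$), so the $M_{r+1}$-orbit structure on $N_r$ is governed by a polynomial diffeomorphism with Jacobian $1$; hence Lebesgue measure factors, and the inner integral over $N_r/S_r$ can be evaluated by the inductive hypothesis (with $v$ replaced by $\pi'_{\lambda_1 + \dots + \lambda_r}(y)v$, whose norm is unchanged by unitarity) before integrating the $\pi_{\lambda_{r+1}}$-coefficient over $M_{r+1}/Z_{r+1}$ via Schur orthogonality. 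I expect this bookkeeping — rather than any deep new idea — to be the main thing to get right; the rest is the iteration of Proposition \ref{L2.2} and the elementary Heisenberg degree formula.
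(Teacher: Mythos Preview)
Your proposal is correct and follows exactly the paper's own approach: the theorem is stated as the terminal case $r+1=m$ of the recursion in Proposition \ref{L2.2}, together with the Heisenberg formal-degree formula $\deg\pi_{\lambda_r}=|\lambda_r|^{\ell-2r}$ spelled out just before the theorem. Your additional remarks on irreducibility and on the measure-preserving product decomposition are already absorbed into the proofs of Lemma \ref{ext-r} and Propositions \ref{L2.1}--\ref{L2.2}, so nothing new is needed at this final step.
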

\begin{definition}\label{stepwise1}
{\rm The representations $\pi_\lambda$\,, constructed as just above,
are the {\it stepwise square integrable} representations of $N$ relative
to the decompositions (\ref{2.1}), (\ref{2.3}) and (\ref{2.4}).}
\hfill $\diamondsuit$
\end{definition}

\section{Plancherel Formula for Upper Triangular Matrices}
\label{up-triang-planch}
\setcounter{equation}{0}

The Plancherel measure for the group $M_r$ is 
$2^{d_r} d_r! |\lambda_r|^{d_r}d\lambda_r$
where $\dim M_r = 2d_r + 1$ and $d\lambda_r$ is Lebesgue measure on $\gz_r^*$.
If $f \in L^1(M_r)$
we have $\dot \pi_{\lambda_r}(f) = \int_{M_r} f(x_r)\pi_{\lambda_r}(x_r)dx_r$.
One version of the Plancherel formula for $M_r$ is
\begin{equation}\label{planch-hs}
||f||^2_{L^2(M_r)} = 2^{d_r} d_r! \int_{\gz_r^*} 
	||\dot \pi_{\lambda_r}(f)||_{HS}^2 |\lambda_r|^{d_r}d\lambda_r
	\hskip .5cm (d_r = \ell -2r)
\end{equation}
where $f \in L^1(M_r)\cap L^2(M_r)$ and $||\cdot ||_{HS}$ is Hilbert--Schmidt
norm, and another is
\begin{equation}\label{planch-trace}
f(x) =c_r\int_{\gz_r^*}\Theta_{\pi_{\lambda_r}}(r_xf)|\lambda_r|^{d_r}d\lambda_r
\end{equation}
where 
$\Theta_{\pi_{\lambda_r}}$ is the distribution character of $\pi_{\lambda_r}$,
given by $\Theta_{\pi_{\lambda_r}}(h) = \tr \pi_{\lambda_r}(h)$, 
$f \in C^\infty_c(M_r)$, $c_r = 2^{d_r} d_r!$, and 
$r_x$ is right translation of functions, $(r_xf)(g) =  f(gx)$.  As we will see
in a moment from the formula, the distribution $\Theta_{\pi_{\lambda_r}}$
is tempered, i.e. extends by continuity to the Schwartz space 
$\cC(M_r)$. 
\medskip 

To make this
explicit one needs the character formula for $\pi_{\lambda_r}$, i.e. the
formula for the tempered distribution $\Theta_{\pi_{\lambda_r}}$.  That is
given as follows.  Define $h_1 \in C^\infty_c(\gm_r)$ by $h_1(\xi) =
h(\exp(\xi))$.  The geometric tangent space of $\Ad^*(M_r)\lambda_r$ is 
the coadjoint orbit $\Ad^*(M_r)\lambda_r$ itself, the
affine hyperplane $\lambda_r + \gz_r^\perp$ in $\gm_r^*\cong (\gm_r/\gz_r)^*$.
We use Lebesgue measure $d\nu_r$ on $(\gm_r/\gz_r)^*$ normalized so that
Fourier transform is an isometry from $L^2(\gm_r/\gz_r)$ onto
$L^2(\gm_r/\gz_r)^*$, and we translate $d\nu_r$ to a measure 
$d\nu_{\lambda_r}$ on the orbit.  Then, from \cite{P1967} and
\cite{MW1973},
\begin{equation}\label{cf1}
\Theta_{\pi_{\lambda_r}}(h) = 
  c_r^{-1} |\lambda_r|^{-d_r} 
     \int_{\Ad^*(M_r)\lambda_r}\widehat{h_1}(\xi)d\nu_{\lambda_r}(\xi)
	\hskip .5cm (d_r = \ell -2r)
\end{equation}
where $\widehat{h_1}$ is the Fourier transform of $h_1$.
For all this see \cite[Theorem 6 and its proof]{MW1973}.
\medskip

In order to extend these results from one group $M_r$ to the upper
triangular group $N$ we first need 

\begin{proposition}\label{supp-planch}
Plancherel measure on $\widehat{N}$ is concentrated on 
$\{\pi_\lambda \mid \lambda = \lambda_1 + \dots + \lambda_m\,, \,\,
 0 \ne \lambda_r \in \gz_r^*\,\,\, \forall r\}$.
\end{proposition}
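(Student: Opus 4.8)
The plan is to show that the complement of the given set of representations carries no Plancherel mass, by an induction on the filtration $\gn_1 \subset \gn_2 \subset \dots \subset \gn_m = \gn$ using the ``Plancherel by stages'' principle for the sequence of normal subgroups $N_1 \triangleleft N_2 \triangleleft \dots \triangleleft N_m = N$. Recall that for a semidirect product $N_{r+1} = N_r \rtimes M_{r+1}$, Plancherel measure on $\widehat{N_{r+1}}$ disintegrates over Plancherel measure on $\widehat{N_r}$, and over each orbit $\widehat{N_r}/N_{r+1}$ one gets the Mackey little-group data. The base case $r=1$ is classical: $N_1 = M_1$ is Heisenberg (or $\R$ when $\ell$ is even), and its Plancherel measure is supported on the $\pi_{\lambda_1}$ with $0 \neq \lambda_1 \in \gz_1^*$, together with a measure-zero set of one-dimensional characters. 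So the induction hypothesis is: Plancherel measure on $\widehat{N_r}$ is concentrated on $\{\pi_{\lambda_1 + \dots + \lambda_r} \mid 0 \neq \lambda_i \in \gz_i^* \ \forall i \leq r\}$, where these $\pi_{\lambda_1+\dots+\lambda_r}$ are exactly the stepwise square integrable representations of $N_r$ built in Sections~\ref{up-triang-l2} (Propositions~\ref{L2.1}, \ref{L2.2}).

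For the inductive step, I would argue that (i) the set of $N_{r+1}$-orbits in $\widehat{N_r}$ that meet the ``good'' set $\{\pi_{\lambda_1+\dots+\lambda_r} : 0 \neq \lambda_i\}$ is itself conull (this follows from the induction hypothesis, since Plancherel measure on $\widehat{N_{r+1}}$ projects to Plancherel measure on $\widehat{N_r}$, which is concentrated there), and (ii) for such a $\pi_{\lambda_1+\dots+\lambda_r}$, the little-group analysis of Lemma~\ref{ext-r} shows that the irreducibles of $N_{r+1}$ lying over it are exactly the $\pi'_{\lambda_1+\dots+\lambda_r}\widehat\otimes\gamma$ with $\gamma \in \widehat{M_{r+1}}$. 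Now the key point is that $M_{r+1}$ is Heisenberg (or $\R$), so \emph{its} Plancherel measure is concentrated on the infinite-dimensional $\pi_{\lambda_{r+1}}$ with $0 \neq \lambda_{r+1}\in\gz_{r+1}^*$. The one-dimensional characters of $M_{r+1}$ form a set of Plancherel measure zero in $\widehat{M_{r+1}}$, hence (by the disintegration) the representations $\pi'_{\lambda_1+\dots+\lambda_r}\widehat\otimes\gamma$ with $\gamma$ one-dimensional form a Plancherel-null subset of $\widehat{N_{r+1}}$. Since $\pi'_{\lambda_1+\dots+\lambda_r}\widehat\otimes\pi_{\lambda_{r+1}} = \pi_{\lambda_1+\dots+\lambda_{r+1}}$ by definition (Proposition~\ref{L2.2}), this establishes the claim at level $r+1$.

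The main obstacle is making the ``Plancherel measure disintegrates along $N_{r+1} = N_r \rtimes M_{r+1}$'' step rigorous: one must know that the Mackey machine applies cleanly here (the relevant stabilizers, $2$-cocycles, and the measurability of the orbit space), and that Plancherel measure really is the push-forward/fibering one expects rather than merely being absolutely continuous with respect to it. For nilpotent $N$ this is standard — the orbit method of Kirillov and the Pukanszky/Kleppner--Lipsman theory of group extensions with the relevant regularity all go through, and $\widehat{N_{r+1}}$ is a standard Borel space with the expected fibering over $\widehat{N_r}$. I would cite \cite{K1962} (and the extension theory, e.g. Mackey's normal-subgroup analysis) rather than reprove it. One technical care is needed with the action of $M_{r+1}$ on $\widehat{N_r}$: by Lemma~\ref{ext-r} the relevant representations $\pi_{\lambda_1+\dots+\lambda_r}$ are actually \emph{fixed} by $M_{r+1}$ (the extension $\pi'$ exists on the same Hilbert space with $d\pi'(\gz_{r+1})=0$), so the little group is all of $M_{r+1}$, the cocycle is trivial, and the fiber over $\pi_{\lambda_1+\dots+\lambda_r}$ is simply $\widehat{M_{r+1}}$ with its own Plancherel measure — which is exactly what the argument above uses.
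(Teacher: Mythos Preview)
Your argument is correct, but the paper takes a shorter and more direct route. Instead of inducting up the filtration $N_1\subset N_2\subset\dots\subset N_m$ and invoking the Mackey/Kleppner--Lipsman disintegration at each stage, the paper works in a single step through the abelian subgroup $S=Z_1\times\dots\times Z_m$: by induction by stages the left regular representation is
\[
\Ind_{\{1\}}^N(1)=\Ind_S^N\bigl(\Ind_{\{1\}}^S(1)\bigr)=\int_{\gs^*}\Ind_S^N\bigl(e^{2\pi\sqrt{-1}\zeta}\bigr)\,d\zeta,
\]
and the complement of $\gt^*$ in $\gs^*$ is the zero set of the polynomial $P(\lambda)=\prod_r\lambda_r^{d_r}$, hence has Lebesgue measure zero and can be discarded from the direct integral. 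This avoids all the extension-theory bookkeeping you carefully set up.

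What your approach buys is that it makes explicit, at every level, \emph{which} irreducibles carry the mass: you identify them as the stepwise square integrable $\pi_{\lambda_1+\dots+\lambda_r}$ via Lemma~\ref{ext-r}, whereas the paper's one-line argument leaves the identification of $\Ind_S^N(e^{2\pi\sqrt{-1}\lambda})$ with a multiple of $\pi_\lambda$ (for $\lambda\in\gt^*$) implicit in the surrounding theory. Your route is also the natural one if one later wants the explicit Plancherel density, since the fiberwise factors $|\lambda_r|^{d_r}$ appear step by step; the paper recovers that information separately.
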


\begin{proof}
If $\zeta \in \gs^*$ then
$e^{2\pi\sqrt{-1}\zeta}: \xi \mapsto e^{2\pi\sqrt{-1}\zeta(\log \xi)}$ on $S$
is a unitary character on $S$.
Denote the induced representation
$\widetilde{\zeta} = \Ind_S^N(e^{2\pi\sqrt{-1}\zeta})$.
Induction by stages says that the left regular representation of $N$
is $\Ind_{\{1\}}^N(1) = \int_{\gs^*} \widetilde{\zeta} d\zeta$
where $d\zeta$ is Lebesgue measure on $\gs^*$.
Let 
\begin{equation}\label{regset}
\gt^* = \{\lambda  = \lambda_1 + \dots + \lambda_m\,, \,\,
 0 \ne \lambda_r \in \gz_r^*\,\,\, \forall r\}
\text{ and } 
P(\lambda) = \lambda_1^{d_1}\lambda_2^{d_2}\dots\lambda_m^{d_m}\,.
\end{equation}  
Since $P$ is not identically zero
we can ignore its zero set in the direct integral, so
left regular representation of $N$
is $\Ind_{\{1\}}^N(1) = \int_{\gt^*} \widetilde{\zeta} d\zeta$.
\end{proof}
Next, we break up the bilinear form $b_\lambda$\,.
\begin{lemma}\label{split-b}
Decompose each $\gm_r = \gz_r + \gv_r$ where $\gv_r$ is the span of the
$e_{i,j}$ in $\gm_r$ but not in $\gz_r$\,, and similarly $\gn = \gs + \gv$.  
If $\lambda \in \gt^*$ then the antisymmetric bilinear form $b_\lambda$ on 
$\gv$ is the direct sum  $b_{\lambda_1} \oplus \dots \oplus b_{\lambda_m}$
of nondegenerate bilinear forms on the $\gv_r$.  Equivalently,
if $r \ne t$ then $[\gm_r,\gm_t] \subset \gv$.
\end{lemma}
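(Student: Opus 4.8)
The plan is to show that the bilinear form $b_\lambda$ on $\gv = \gv_1 \oplus \dots \oplus \gv_m$ is block-diagonal with respect to this decomposition, and that each diagonal block is nondegenerate. The ``equivalently'' clause tells us what the block-diagonality really amounts to: $b_\lambda(\gv_r, \gv_t) = 0$ for $r \ne t$. First I would observe that $b_\lambda(x,y) = \lambda([x,y]) = \sum_s \lambda_s([x,y])$, and $\lambda_s$ is supported on $\gz_s = e_{s,\ell-s+1}\R$, so $b_\lambda(x,y)$ only sees the $\gz_s$-components of the brackets. Hence it suffices to show that for $r \ne t$, $[\gm_r, \gm_t]$ has zero component in every $\gz_s$; in fact I claim $[\gm_r,\gm_t] \subset \gv$, i.e. the bracket lands entirely in the non-central part. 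This is the content of the ``equivalently.''

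For the off-diagonal vanishing, I would argue directly from (\ref{mm}): $[\gm_r,\gm_t] \subset \gn_{\min(r,t)}$. Say $r < t$; then $[\gm_r,\gm_t] \subset \gn_r = \Span\{e_{i,j} \mid i \leqq r \leqq \ell-r+1 < j\}$ by (\ref{2.1}). It remains to check that this bracket avoids all the center lines $\gz_s = e_{s,\ell-s+1}\R$. The cleanest route is a direct matrix computation: a typical basis element of $\gm_r$ is $e_{r,s'}$ or $e_{q,\ell-r+1}$ (with $r+1 \leqq s', q \leqq \ell-r$) or $e_{r,\ell-r+1}$, and similarly for $\gm_t$ with $t$ in place of $r$. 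Using the bracket rules $[e_{i,j},e_{m,n}] = e_{i,n}$ if $j=m$, $= -e_{m,j}$ if $n=i$, and $0$ otherwise, one checks case by case that whenever the bracket of a basis vector of $\gm_r$ with one of $\gm_t$ is nonzero, the resulting $e_{i,n}$ (or $-e_{m,j}$) has its row index strictly less than its column index minus the ``antidiagonal'' relation needed to be central — concretely, it is never of the form $e_{s,\ell-s+1}$. The key point is that $\gm_t$'s indices are pinched toward the band $r < t \leqq \cdot \leqq \ell-t+1$, so any surviving bracket with $\gm_r$ produces an entry whose column index exceeds $\ell - r$ while its row index is $\leqq r$; such an entry $e_{i,j}$ with $i \leqq r$ and $j > \ell-r+1$ cannot satisfy $i + j = \ell+1$, hence lies in $\gv$, not in any $\gz_s$. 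This also subsumes the $r=t$ diagonal case restricted to brackets landing in $\gz_r$ versus elsewhere.

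Having established $b_\lambda = b_{\lambda_1} \oplus \dots \oplus b_{\lambda_m}$, nondegeneracy of each block is immediate from the earlier setup: $\gm_r$ is the Heisenberg algebra $\gh_{\ell-2r}$ with center $\gz_r$, and $0 \ne \lambda_r \in \gz_r^*$, so $b_{\lambda_r}(x,y) = \lambda_r([x,y])$ is exactly the (nondegenerate) symplectic form on $\gm_r/\gz_r = \gv_r$ that makes $\pi_{\lambda_r}$ square integrable — this is precisely the map $J_r$ restricted to $\gv_r$ being invertible, as noted after (\ref{2.1}). Alternatively one can cite that $\pi_{\lambda_r}$ is a square integrable representation of $M_r$, which by the Moore–Wolf criterion is equivalent to $\Pf(\lambda_r) \ne 0$, i.e. $b_{\lambda_r}$ nondegenerate on $\gv_r$.

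The main obstacle is the bookkeeping in the case analysis of the second step: one must be careful that $\gm_r$ contains the mixed-index generators $e_{q,\ell-r+1}$ (second row index family), not just the $e_{r,s'}$, and verify that \emph{no} pairing of the three families of $\gm_r$ with the three families of $\gm_t$ produces a central $e_{s,\ell-s+1}$. I expect the slickest packaging is to avoid enumerating all nine subcases and instead note: any nonzero $[e_{i,j},e_{m,n}]$ equals $\pm e_{p,q}$ where $\{p,q\} \subseteq \{i,j,m,n\}$ with $p$ one of the ``small'' indices $\{i,m\}$ and $q$ one of the ``large'' indices $\{j,n\}$; for it to be central we'd need $p + q = \ell+1$, but the index constraints defining $\gm_r$ and $\gm_t$ with $r \neq t$ force every small index to be $\leqq \max(r,t)$ and (after the bracket identification $j=m$ or $n=i$) the surviving large index to exceed $\ell - \min(r,t)$, whence $p + q > \ell - \min(r,t) + 1 \geq \ell+1$ only in the degenerate equality case $r=t$, which is excluded — giving a one-line contradiction. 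Pinning down that inequality precisely is where the care is needed.
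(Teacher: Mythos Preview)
Your overall strategy is right: the equivalence reduces the lemma to the single combinatorial claim $[\gm_r,\gm_t]\subset\gv$ for $r\neq t$, and the nondegeneracy of each block is indeed immediate from the Heisenberg structure of $\gm_r$ once $\lambda_r\neq 0$. The problem is that your index bounds in both the second and the last paragraphs are simply false, so the argument as written does not go through.

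Take $\ell=5$, $r=1$, $t=2$. Then $e_{1,2}\in\gm_1$ (first family) and $e_{2,3}\in\gm_2$; their bracket is $e_{1,3}$, whose column index $3$ does \emph{not} exceed $\ell-r=4$. Or take $e_{4,5}\in\gm_1$ (second family) and $e_{2,4}\in\gm_2$; the bracket is $-e_{2,5}$, whose row index $2$ is \emph{not} $\leqq r=1$. So neither half of ``column index exceeds $\ell-r$ while row index is $\leqq r$'' holds. Likewise, your ``slick packaging'' asserts that every row index is $\leqq\max(r,t)$ and every surviving column index exceeds $\ell-\min(r,t)$; both fail in these same examples. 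The honest case analysis really does require tracking which of the three families of $\gm_r$ and of $\gm_t$ each factor lies in; when you do that carefully (say $r<t$, bracket $=e_{i,b}$ from $j=a$), the antidiagonal condition $i+b=\ell+1$ forces $i\in\{t,\dots,\ell-t\}$, hence $i\neq r$, hence $e_{i,j}$ is in the second family with $j=\ell-r+1$, and then $a=j=\ell-r+1$ is too large to be a row index in $\gm_t$. The other case $b=i$ is symmetric. That is the computation you need; your shortcut does not capture it.

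The paper sidesteps the case analysis altogether by introducing the antidiagonal reflection $\rho$ (the antitranspose $e_{i,j}\mapsto e_{\ell+1-j,\ell+1-i}$) and observing that $\rho(\gm_s)=\gm_s$ for every $s$. The point is that a product of matrix units landing on the antidiagonal forces a $\rho$-relationship between the two factors that cannot be satisfied across distinct layers $\gm_r$ and $\gm_t$. This is a more conceptual route than raw index chasing and is worth understanding, since it explains \emph{why} the layers $\gm_r$ are the right objects: they are exactly the $\rho$-invariant slices.
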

\begin{proof} The equivalence is clear from the definition of $b_\lambda$.
Let $\rho$ denote reflection on the antidiagonal.  Suppose that
$e_{i,j} \in \gm_r$ and $e_{a,b} \in \gm_t$ with $r \ne t$.  Then
$e_{i,j}e_{a,b}$ is on the antidiagonal if and only if $\rho(e_{i,j}) =
e_{a,b}$, and this happens if and only if $e_{a,b}e_{i,j}$ is on the
antidiagonal.  As $\rho(\gm_r) = \gm_r$ and $\rho(\gm_t) = \gm_t$,
it follows that $[\gm_r,\gm_t] \subset \gv$.
\end{proof}

Given $\lambda \in \gt^*$, the coadjoint orbit $\cO(\lambda) := \Ad^*(N)\lambda$
is just $\Ad(M_1)\lambda_1 \times \dots \times \Ad(M_m)\lambda_m$, and we
have the measure $d\nu_\lambda = d\nu_{\lambda_1} \times \dots \times
d\nu_{\lambda_m}$ on it.  Denote $c = c_1c_2\dots c_m = 2^{d_1 + \dots + d_m}
d_1! d_2! \dots d_m!$ where we recall $d_r = \tfrac{1}{2}(\dim\gm_r -1)
= \ell - 2r$.  Let $f \in C_c^\infty(N)$ (or more generally
$f \in \cC(N)$) and $\widehat{f_1}$ the classical
Fourier transform of the lift $f_1(\xi) = f(\exp(\xi))$ of $f$ to $\gn$.
We use Lebesgue measure $d\nu$ on $(\gm/\gs)^*$ normalized so that
Fourier transform is an isometry of $L^2(\gm/\gs)$ onto $L^2(\gm/\gs)^*$.
\medskip

Now, exactly as in (\ref{planch-trace}) and (\ref{cf1}) we combine
the result \cite[Theorem, p. 17]{P1967} of Puk\' anszky with the
method of \cite[proof of Theorem 6]{MW1973} to obtain

\begin{theorem}\label{uppertriang}
Let $N$ be the group of real strictly triangular $\ell \times \ell$ matrices,
$\gm_r$ and $\gn_r$ the algebras of Section \ref{up-triang-decomp},
and $M_r$ and $N_r$ the corresponding analytic subgroups of $N$.  Let
$\lambda = \lambda_1 + \dots + \lambda_m \in \gt^*$,
and $P(\lambda) = 
\lambda_1^{\ell - 2}\lambda_2^{\ell - 4} \dots \lambda_m^{\ell - 2m}$, 
as in {\rm (\ref{regset})}.  
Then $\pi_\lambda \in \widehat{N}$ has
distribution character
$$
\Theta_{\pi_\lambda}(f) = \tr \dot \pi_\lambda(f) =
\tfrac{1}{c}\tfrac{1}{|P(\lambda)|}\int_{\cO(\lambda)}
		\widehat{f_1}(\xi)d\nu(\xi)
$$
and $N$ has Plancherel formula
$$
f(x) = c\int_{\gt^*} \Theta_{\pi_\lambda}(r_xf)|P(\lambda)|d\lambda\,.
$$
\end{theorem}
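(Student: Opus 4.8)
The plan is to establish the distribution-character formula for $\pi_\lambda$ first, and then to read off the Plancherel formula from it by Fourier inversion on $\gn^*$.

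For the character formula, the key point I would use is that $\pi_\lambda$, as built by the recursion of Sections \ref{up-triang-decomp}--\ref{up-triang-l2}, is the representation associated by the Kirillov correspondence to the coadjoint orbit $\cO(\lambda) = \Ad^*(M_1)\lambda_1 \times \cdots \times \Ad^*(M_m)\lambda_m$. Each stage of the recursion is the Kirillov/Mackey construction (induction from an invariant polarization, as in the passage before Lemma \ref{ext-1-2}), and Lemma \ref{split-b}---which gives $[\gm_r,\gm_t]\subset\gv$ for $r\neq t$ and hence $b_\lambda = b_{\lambda_1}\oplus\cdots\oplus b_{\lambda_m}$---ensures that the polarizations at successive stages are compatible and that the orbit is this product. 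I would then invoke Puk\'anszky's character theorem \cite{P1967}: for a connected simply connected nilpotent Lie group the distribution character of the representation attached to a coadjoint orbit is the Fourier transform of the canonical invariant measure on that orbit. The canonical measure of a product orbit is the product of the canonical measures of the factors; on each factor $\Ad^*(M_r)\lambda_r = \lambda_r + \gz_r^\perp$, formula (\ref{cf1}) (the Heisenberg case, from \cite{P1967} and \cite{MW1973}) identifies that measure with $c_r^{-1}|\lambda_r|^{-d_r}\,d\nu_{\lambda_r}$. Multiplying over $r$ and using $c = c_1\cdots c_m$, $|P(\lambda)| = \prod_r|\lambda_r|^{d_r}$, and $d\nu = d\nu_{\lambda_1}\times\cdots\times d\nu_{\lambda_m}$, Puk\'anszky's formula becomes
$$
\Theta_{\pi_\lambda}(f) = \tfrac{1}{c}\,\tfrac{1}{|P(\lambda)|}\int_{\cO(\lambda)}\widehat{f_1}(\xi)\,d\nu(\xi),
$$
which is the asserted character formula; since $\cO(\lambda)$ is a product of affine subspaces, the orbital measure is tempered, so $\Theta_{\pi_\lambda}$ extends by continuity to $\cC(N)$.

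For the Plancherel formula, I would show that the tempered distribution $T := c\int_{\gt^*}|P(\lambda)|\,\Theta_{\pi_\lambda}\,d\lambda$ on $N$ equals the Dirac mass $\delta_e$ at the identity; granting that, $f(x) = \delta_e(r_xf) = (r_xf)(e) = f(x)$ is exactly the claimed identity. Substituting the character formula, the factors $c$ and $|P(\lambda)|$ cancel, leaving
$$
T(f) = \int_{\gt^*}\left(\int_{\cO(\lambda)}\widehat{f_1}(\xi)\,d\nu(\xi)\right)d\lambda .
$$
By Lemma \ref{split-b} each orbit $\cO(\lambda)$ is the affine subspace $\lambda + \gv^*$ of $\gn^* = \gs^*\oplus\gv^*$, and as $\lambda = \lambda_1 + \dots + \lambda_m$ runs over $\gt^*$ these subspaces foliate $\gn^*$ up to the Lebesgue-null set $\bigcup_r\{\lambda_r = 0\}$ (the dual-space version of Proposition \ref{supp-planch}). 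With the chosen normalizations the product measure $d\nu(\xi)\,d\lambda$ is the Lebesgue measure on $\gn^*$ dual to the one used to form $\widehat{f_1}$, so $T(f) = \int_{\gn^*}\widehat{f_1}(\xi)\,d\xi = f_1(0) = f(e)$ by Fourier inversion. Hence $T = \delta_e$, which completes the proof.

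I expect the main obstacle to be the character-formula step, specifically verifying that the $\pi_\lambda$ produced by the stepwise recursion really is the Kirillov representation of the product orbit $\cO(\lambda)$---this means tracking the invariant polarizations through all $m$ stages and applying Lemma \ref{split-b} at each step---together with the normalization bookkeeping needed to see that Puk\'anszky's constant factors exactly as $c\,|P(\lambda)|$; the precise normalizations of the $d\nu_{\lambda_r}$ in (\ref{cf1}), of $d\nu$ and $d\lambda$, and of the Fourier transform on $\gn$ all have to be reconciled. The measure-foliation statement used in the Plancherel step is Lemma \ref{split-b} transported to $\gn^*$, and is routine once it is set up with care.
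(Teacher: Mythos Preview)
Your proposal is correct and is essentially the same approach the paper takes: the paper's proof is the single sentence preceding the theorem, which says to combine Puk\'anszky's character theorem \cite[Theorem, p.~17]{P1967} with the method of \cite[proof of Theorem~6]{MW1973}, and you have simply spelled out what that entails---identifying $\pi_\lambda$ with the Kirillov representation of the product orbit via Lemma~\ref{split-b}, reading off the character as in (\ref{cf1}), and then recovering $\delta_e$ by Fourier inversion over the affine foliation $\gn^* = \bigcup_{\lambda\in\gt^*}(\lambda+\gv^*)$. The normalization bookkeeping you flag as the main obstacle is exactly the content of the paragraph before the theorem (definition of $d\nu_\lambda$ and $c$), so there is nothing further to add.
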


\section{General Theory}\label{general_theory}
\setcounter{equation}{0}

\noindent
Here's what we need to extend our considerations beyond the group of upper
triangular matrices.  The connected simply connected nilpotent Lie group
should decompose as 
\begin{equation}\label{setup}
\begin{aligned}
N = &M_1M_2\dots M_{m-1}M_m \text{ where }\\
 &\text{(a) each factor $M_r$ has unitary representations with coefficients in 
$L^2(M_r/Z_r)$,} \\
 &\text{(b) each } N_r := M_1M_2\dots M_r \text{ is a normal subgroup of } N
   \text{ with } N_r = N_{r-1}\rtimes M_r \text{ semidirect,}\\
 &\text{(c) decompose }\gm_r = \gz_r + \gv_r \text{ and } \gn = \gs + \gv 
        \text{ as vector direct sums where } \\
 &\phantom{XXXX}\gs = \oplus\, \gz_r \text{ and } \gv = \oplus\, \gv_r;
    \text{ then } [\gm_r,\gz_s] = 0 \text{ and } [\gm_r,\gm_s] \subset \gv
	\text{ for } r > s\,.
\end{aligned}
\end{equation}

In order to follow the arguments leading to Theorem \ref{uppertriang},
we denote
\begin{equation}\label{c-d}
\begin{aligned}
&\text{(a) }d_r = \tfrac{1}{2}\dim(\gm_r/\gz_r) \text{ so }
	\tfrac{1}{2} \dim(\gn/\gs) = d_1 + \dots + d_m\,,
	\text{ and } c = 2^{d_1 + \dots + d_m} d_1! d_2! \dots d_m!\\
&\text{(b) }b_{\lambda_r}: (x,y) \mapsto \lambda([x,y]) 
	\text{ viewed as a bilinear form on } \gm_r/\gz_r \\
&\text{(c) }S = Z_1Z_2\dots Z_m = Z_1 \times \dots \times Z_m \text{ where } Z_r
	\text{ is the center of } M_r \\
&\text{(d) }P: \text{ polynomial } P(\lambda) = \Pf(b_{\lambda_1})
	\Pf(b_{\lambda_2})\dots \Pf(b_{\lambda_m}) \text{ on } \gs^* \\
&\text{(e) }\gt^* = \{\lambda \in \gs^* \mid P(\lambda) \ne 0\} \\
&\text{(f) } \pi_\lambda \in \widehat{N} \text{ where } \lambda \in \gt^*: 
    \text{ irreducible unitary rep. of } N = M_1M_2\dots M_m
	\text{ as in Section \ref{up-triang-planch} }
\end{aligned}
\end{equation}
Proposition \ref{supp-planch} extends immediately to this setting: Plancherel
measure is concentrated on $\{\pi_\lambda \mid \lambda \in \gt^*\}$.
It is slightly more delicate to extend Lemma \ref{split-b}, but
(\ref{setup})(c) does the job.

\begin{theorem}\label{plancherel-general}
Let $N$ be a connected simply connected nilpotent Lie group that
satisfies {\rm (\ref{setup})}.  Then Plancherel measure for $N$ is
concentrated on $\{\pi_\lambda \mid \lambda \in \gt^*\}$.
If $\lambda \in \gt^*$, and if $u$ and $v$ belong to the
representation space $\cH_{\pi_\lambda}$ of $\pi_\lambda$,  then
the coefficient $f_{u,v}(x) = \langle u, \pi_\nu(x)v\rangle$
satisfies
\begin{equation}
||f_{u,v}||^2_{L^2(N / S)} = \frac{||u||^2||v||^2}{|P(\lambda)|}\,.
\end{equation}
The distribution character $\Theta_{\pi_\lambda}$ of $\pi_{\lambda}$ satisfies
\begin{equation}
\Theta_{\pi_\lambda}(f) = c^{-1}|P(\lambda)|^{-1}\int_{\cO(\lambda)}
	\widehat{f_1}(\xi)d\nu_\lambda(\xi) \text{ for } f \in \cC(N)
\end{equation}
where $\cC(N)$ is the Schwartz space, $f_1$ is the lift 
$f_1(\xi) = f(\exp(\xi))$, $\widehat{f_1}$ is its classical Fourier transform,
$\cO(\lambda)$ is the coadjoint orbit $\Ad^*(N)\lambda = \gv^* + \lambda$,
and $d\nu_\lambda$ is the translate of normalized Lebesgue measure from
$\gv^*$ to $\Ad^*(N)\lambda$.  The Plancherel formula on $N$ is
\begin{equation}
f(x) = c\int_{\gt^*} \Theta_{\pi_\lambda}(r_xf) |P(\lambda)|d\lambda
	\text{ for } f \in \cC(N).
\end{equation}
\end{theorem}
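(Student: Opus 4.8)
The plan is to mirror, in the abstract setting of \textrm{(\ref{setup})}, the recursive construction and the three computations (square integrability of coefficients, character formula, Plancherel inversion) that were carried out for upper triangular matrices in Sections \ref{up-triang-l2} and \ref{up-triang-planch}. The point is that none of those arguments used anything about $\gn$ beyond the properties now abstracted into \textrm{(\ref{setup})}(a)--(c): the factors $M_r$ have square integrable representations $\pi_{\lambda_r}$ specified by $0\ne\lambda_r\in\gz_r^*$; the $N_r$ are normal with $N_r=N_{r-1}\rtimes M_r$; and the commutation relations $[\gm_r,\gz_s]=0$, $[\gm_r,\gm_s]\subset\gv$ for $r>s$ play exactly the role that Lemma \ref{norm} and Lemma \ref{split-b} played in the matrix case.

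First I would set up the recursion. Given $\lambda=\lambda_1+\dots+\lambda_m\in\gt^*$, suppose $\pi_{\lambda_1+\dots+\lambda_r}\in\widehat{N_r}$ has been built on $\cH_{\pi_{\lambda_1}}\widehat\otimes\dots\widehat\otimes\cH_{\pi_{\lambda_r}}$ with coefficients satisfying $\|f_{z,w}\|^2_{L^2(N_r/S_r)}=\|z\|^2\|w\|^2/(\deg\pi_{\lambda_1}\cdots\deg\pi_{\lambda_r})$, where $S_r=Z_1\cdots Z_r$ and $\deg\pi_{\lambda_r}=|\Pf(b_{\lambda_r})|$. The relation $[\gm_{r+1},\gz_s]=0$ for $s\le r$ gives $\ad^*(\gm_{r+1})(\lambda')|_{\gs_r+\gm_{r+1}}=0$ for the extension $\lambda'$ of $\lambda_1+\dots+\lambda_r$ that vanishes on $\gm_{r+1}$, so a polarization for $\lambda_1+\dots+\lambda_r$ in $\gn_r$ extends by adding $\gm_{r+1}$ to a polarization in $\gn_{r+1}$; hence $\pi_{\lambda_1+\dots+\lambda_r}$ extends to $\pi'_{\lambda_1+\dots+\lambda_r}$ on the same Hilbert space with $d\pi'(\gz_{r+1})=0$. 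The Mackey little group method (as in Lemmas \ref{ext-1-2} and \ref{ext-r}) then identifies the irreducible representations of $N_{r+1}$ lying over $\pi_{\lambda_1+\dots+\lambda_r}$, and I set $\pi_{\lambda_1+\dots+\lambda_{r+1}}=\pi'_{\lambda_1+\dots+\lambda_r}\widehat\otimes\pi_{\lambda_{r+1}}$. The norm identity propagates by the same Fubini-plus-Schur-orthogonality computation as in Proposition \ref{L2.1}, using that the action of $M_{r+1}$ on $\gn_r$ is unipotent to get a measure-preserving splitting $N_{r+1}/S_{r+1}=(N_r/S_r)\times(M_{r+1}/Z_{r+1})$. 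Taking $r+1=m$ gives the coefficient formula $\|f_{u,v}\|^2_{L^2(N/S)}=\|u\|^2\|v\|^2/|P(\lambda)|$ with $P(\lambda)=\prod_r\Pf(b_{\lambda_r})$, and $\pi_\lambda$ is irreducible by the little-group analysis.

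Next I would establish the character formula. For each $M_r$ the character of $\pi_{\lambda_r}$ is given by Puk\'anszky's orbital integral \textrm{(\ref{cf1})}, as recorded from \cite{P1967} and \cite{MW1973}. By \textrm{(\ref{setup})}(c) the coadjoint orbit $\cO(\lambda)=\Ad^*(N)\lambda$ splits as $\prod_r\Ad^*(M_r)\lambda_r=\lambda+\gv^*$ with product measure $d\nu_\lambda=\prod_r d\nu_{\lambda_r}$ (this is the abstract analogue of Lemma \ref{split-b}: the bilinear form $b_\lambda$ on $\gv$ is the orthogonal direct sum of the $b_{\lambda_r}$ on $\gv_r$). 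Since $\pi_\lambda$ is the (tensor-product) representation assembled from the $\pi_{\lambda_r}$ and $d\pi_\lambda$ annihilates no $\gz_r$-direction, the character of $\pi_\lambda$ factors through $\gv^*$ and is computed orbit-factor by orbit-factor exactly as in \cite[proof of Theorem 6]{MW1973}; this yields $\Theta_{\pi_\lambda}(f)=c^{-1}|P(\lambda)|^{-1}\int_{\cO(\lambda)}\widehat{f_1}(\xi)\,d\nu_\lambda(\xi)$ for $f\in\cC(N)$, with $c=\prod_r 2^{d_r}d_r!$, and shows $\Theta_{\pi_\lambda}$ is tempered. Finally the Plancherel formula follows by induction by stages: the regular representation of $N$ decomposes as $\int_{\gs^*}\Ind_S^N(e^{2\pi i\zeta})\,d\zeta$ (Proposition \ref{supp-planch}), each $\Ind_S^N(e^{2\pi i\zeta})$ restricted over the $\gt^*$-locus is a direct integral of the $\pi_\lambda$'s, and the normalization constant $c|P(\lambda)|$ is forced by matching the coefficient-norm identity above against Schur orthogonality modulo $S$, giving $f(x)=c\int_{\gt^*}\Theta_{\pi_\lambda}(r_xf)|P(\lambda)|\,d\lambda$.

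The main obstacle I anticipate is not any single computation but verifying that the polarization and little-group bookkeeping genuinely goes through under the weaker hypothesis \textrm{(\ref{setup})} rather than the explicit matrix relations: one must check that $\gp'_r=\gp_r+\gm_{r+1}$ really is a (real, invariant) polarization for $\lambda'$ in $\gn_{r+1}$ and that $N_{r+1}/P'_r=N_r/P_r$ as measure spaces, so that the extension $\pi'$ lives on the unchanged Hilbert space. This is where \textrm{(\ref{setup})}(c)'s condition $[\gm_r,\gz_s]=0$ (not merely $[\gm_r,\gm_s]\subset\gv$) is essential, and it is worth isolating as a lemma. Once that is in place the three computations are faithful transcriptions of Sections \ref{up-triang-l2}--\ref{up-triang-planch} with $|\lambda_r|^{d_r}$ replaced by $|\Pf(b_{\lambda_r})|$, and the measure-preserving product decomposition of $N/S$ is immediate from unipotence of each $M_r$-action on $N_{r-1}$.
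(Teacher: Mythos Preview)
Your proposal is correct and follows essentially the same approach as the paper. The paper does not give a standalone proof of Theorem \ref{plancherel-general}; it simply remarks that Proposition \ref{supp-planch} extends immediately, that condition (\ref{setup})(c) handles the analogue of Lemma \ref{split-b}, and that otherwise one follows the arguments leading to Theorem \ref{uppertriang} with $|\Pf(b_{\lambda_r})|$ in place of $|\lambda_r|^{d_r}$ --- exactly the transcription you outline, including the role of $[\gm_{r+1},\gz_s]=0$ in extending the polarization (your anticipated ``main obstacle'') and of $[\gm_r,\gm_s]\subset\gv$ in splitting $b_\lambda$.
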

\begin{definition}\label{stepwise2}
{\rm The representations $\pi_\lambda$ of (\ref{c-d}(f)) are the
{\it stepwise square integrable} representations of $N$ relative to
(\ref{setup}).}\hfill $\diamondsuit$
\end{definition} 

\section{Iwasawa Decompositions}
\label{iwasawa}
\setcounter{equation}{0}
Let $G$ be a real reductive Lie group.  We now carry out the program of 
Section \ref{general_theory} for the groups $N$ of Iwasawa
decompositions $G = KAN$.  Let $m = \rank_\R G = \dim_\R A$ and
notice that we've done the case $G = SL(m + 1;\R)$.
The idea is to use the Kostant cascade construction of strongly orthogonal
roots: $\beta_1$ is the maximal root, $\beta_{r+1}$ is a maximum among
the positive roots orthogonal to $\{\beta_1, \dots , \beta_r\}$, etc.  
\medskip

We fix an Iwasawa decomposition $G = KAN$.  As usual, write $\gk$ for the Lie 
algebra of $K$, $\ga$ for the Lie algebra of $A$, and $\gn$ for the
Lie algebra of $N$.  Complete $\ga$ to a Cartan subalgebra $\gh$ of $\gg$.
Then $\gh = \gt + \ga$ with $\gt = \gh \cap \gk$.  Now we have root systems
\begin{itemize}
\item $\Delta(\gg_\C,\gh_\C)$: roots of $\gg_\C$ relative to $\gh_\C$ 
(ordinary roots),

\item $\Delta(\gg,\ga)$: roots of $\gg$ relative to $\ga$ (restricted roots), 

\item $\Delta_0(\gg,\ga) = \{\alpha \in \Delta(\gg,\ga) \mid 
	2\alpha \notin \Delta(\gg,\ga)\}$ (nonmultipliable restricted roots).
\end{itemize}
\noindent Here $\Delta(\gg,\ga) = \{\gamma|_\ga \mid \gamma \in \Delta(\gg_\C,\gh_\C) 
\text{ and } \gamma|_\ga \ne 0\}$.  Further, 
$\Delta(\gg,\ga)$ and $\Delta_0(\gg,\ga)$ are root 
systems in the usual sense.  Any positive root system 
$\Delta^+(\gg_\C,\gh_\C) \subset \Delta(\gg_\C,\gh_\C)$ defines positive 
systems
\begin{itemize}
\item $\Delta^+(\gg,\ga) = \{\gamma|_\ga \mid \gamma \in 
\Delta^+(\gg_\C,\gh_\C) 
\text{ and } \gamma|_\ga \ne 0\}$ and $\Delta_0^+(\gg,\ga) =
\Delta_0(\gg,\ga) \cap \Delta^+(\gg,\ga)$.
\end{itemize}
\noindent We can (and do) choose $\Delta^+(\gg,\gh)$ so that 
\begin{itemize}
\item$\gn$ is the sum of the positive restricted root spaces and
\item if $\gamma \in \Delta(\gg_\C,\gh_\C)$ and $\gamma|_\ga \in
\Delta^+(\gg,\ga)$ then $\gamma \in \Delta^+(\gg_\C,\gh_\C)$.
\end{itemize}
\medskip

Two roots are called {\em strongly orthogonal} if their sum and their
difference are not roots.  Then they are orthogonal.  We define
\begin{equation}
\begin{aligned}
&\beta_1 \in \Delta^+(\gg,\ga) \text{ is a maximal positive restricted root
and }\\
& \beta_{r+1} \in \Delta^+(\gg,\ga) \text{ is a maximum among the roots of }
\Delta^+(\gg,\ga) \text{ orthogonal to all } \beta_i \text{ with } i \leqq r
\end{aligned}
\end{equation}
Then the $\beta_r$ are mutually strongly orthogonal.  This is Kostant's
cascade construction.  Note that each $\beta_r \in \Delta_0^+(\gg,\ga)$.
Also note that $\beta_1$ is unique if and only if $\Delta(\gg,\ga)$ is
irreducible.
\medskip

For $1\leqq r \leqq m$ define 
\begin{equation}\label{layers}
\begin{aligned}
&\Delta^+_1 = \{\alpha \in \Delta^+(\gg,\ga) \mid \beta_1 - \alpha \in \Delta^+(\gg,\ga)\} 
\text{ and }\\
&\Delta^+_{r+1} = \{\alpha \in \Delta^+(\gg,\ga) \setminus (\Delta^+_1 \cup \dots \cup \Delta^+_r)
	\mid \beta_{r+1} - \alpha \in \Delta^+(\gg,\ga)\}.
\end{aligned}
\end{equation} 
\begin{lemma} \label{fill-out}
If $\alpha \in \Delta^+(\gg,\ga)$ then either 
$\alpha \in \{\beta_1, \dots , \beta_m\}$
or $\alpha$ belongs to exactly one of the sets $\Delta^+_r$\,.
\end{lemma}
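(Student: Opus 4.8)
The plan is to show that the sets $\{\beta_1,\dots,\beta_m\}$ and the layers $\Delta^+_1,\dots,\Delta^+_m$ together partition $\Delta^+(\gg,\ga)$, with the layers pairwise disjoint by construction. First I would record the trivial observations: the definition (\ref{layers}) already builds $\Delta^+_{r+1}$ disjoint from $\Delta^+_1\cup\dots\cup\Delta^+_r$, so the layers are mutually disjoint; and no $\beta_r$ lies in any $\Delta^+_s$, since if $\beta_r\in\Delta^+_s$ then $\beta_s-\beta_r\in\Delta^+(\gg,\ga)$, contradicting strong orthogonality of $\beta_s$ and $\beta_r$ (for $s\neq r$) or giving the zero ``root'' (for $s=r$). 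So the content of the lemma is the \emph{covering} statement: every $\alpha\in\Delta^+(\gg,\ga)$ that is not one of the $\beta_r$ belongs to some $\Delta^+_r$.

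For the covering statement I would argue by a minimality/extremality argument on the cascade index. Fix $\alpha\in\Delta^+(\gg,\ga)$ with $\alpha\notin\{\beta_1,\dots,\beta_m\}$, and let $r$ be the \emph{first} index such that $\alpha$ is orthogonal to none of $\beta_1,\dots,\beta_{r}$ — equivalently, let $\beta_r$ be the first cascade root not orthogonal to $\alpha$. Such an $r$ exists: if $\alpha$ were orthogonal to every $\beta_i$, then at the stage where $\beta_{r}$ is chosen as a maximum among positive roots orthogonal to $\beta_1,\dots,\beta_{r-1}$, the root $\alpha$ would still be a competitor at every stage, and the cascade would not terminate until $\alpha$ itself (or something $\geq\alpha$) is absorbed — more carefully, the cascade stops precisely when there are no positive roots orthogonal to all chosen $\beta_i$, so $\alpha$ must fail orthogonality with some $\beta_i$. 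Having fixed this minimal $r$, I claim $\alpha\in\Delta^+_r$. By minimality $\alpha\notin\Delta^+_1\cup\dots\cup\Delta^+_{r-1}$ (each of those layers consists of roots not orthogonal to the corresponding earlier $\beta_i$ — this needs to be checked, see below), so it remains to show $\beta_r-\alpha\in\Delta^+(\gg,\ga)$.

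The heart of the matter — and the step I expect to be the main obstacle — is this last claim: if $\alpha$ is a positive restricted root, orthogonal to $\beta_1,\dots,\beta_{r-1}$, not orthogonal to $\beta_r$, and $\alpha\neq\beta_r$, then $\beta_r-\alpha$ is again a positive restricted root. The natural route is $\mathfrak{sl}_2$-theory in the restricted root system: since $\beta_r$ is the maximal root among those orthogonal to $\beta_1,\dots,\beta_{r-1}$ (working inside the sub-root-system $\Delta^+(\gg,\ga)^{\perp\{\beta_1,\dots,\beta_{r-1}\}}$, which one must verify is itself a root system), $\beta_r$ is the highest root of (the relevant component of) that subsystem, hence dominant, so $\langle\alpha,\beta_r^\vee\rangle>0$ forces $\alpha-\beta_r^\vee\cdot(\text{something})$ ... more precisely $\beta_r-\alpha$ or $\alpha-\beta_r$ is a root by the standard lemma that $\langle\alpha,\beta^\vee\rangle>0$ implies $\alpha-\beta\in\Delta\cup\{0\}$; since $\alpha\neq\beta_r$ and $\alpha$ is positive while $\beta_r$ is highest, $\alpha-\beta_r$ cannot be positive, so $\beta_r-\alpha$ is a positive root. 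The technical wrinkles are: (i) restricted root systems can be non-reduced, so one should work with $\Delta_0(\gg,\ga)$ or otherwise handle multipliable roots, and the paper has already noted $\beta_r\in\Delta_0^+(\gg,\ga)$; (ii) one must confirm that ``roots orthogonal to $\beta_1,\dots,\beta_{r-1}$'' really forms a root system in which $\beta_r$ is a highest root, which is the standard fact underlying Kostant's cascade; (iii) the positivity of $\langle\alpha,\beta_r\rangle$ given merely ``not orthogonal'' requires knowing the sign, which follows because $\beta_r$ being highest (dominant) makes $\langle\mu,\beta_r\rangle\geq0$ for all positive $\mu$ in the subsystem. I would cite Kostant's cascade results for (ii) and carry out (i) and (iii) directly; the appendix, Lemma \ref{layers-nilpotent}, is presumably where the detailed root-combinatorics is recorded, so here I would give the structural argument and defer the root-by-root verification there. $\hfill\square$
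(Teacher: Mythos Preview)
Your approach is correct in outline but considerably heavier than the paper's.  The paper runs the contrapositive in three sentences: suppose $\alpha\notin\{\beta_1,\dots,\beta_m\}$ and $\alpha$ lies in no $\Delta^+_r$; then inductively $\alpha$ is strongly orthogonal to each $\beta_r$ in turn (since $\alpha\notin\Delta^+_r$ gives $\beta_r-\alpha\notin\Delta^+$, while maximality of $\beta_r$ among positive roots orthogonal to $\beta_1,\dots,\beta_{r-1}$ --- and $\alpha$ is already known orthogonal to those from the previous steps --- rules out $\beta_r+\alpha$ and $\alpha-\beta_r$ as roots); strong orthogonality forces orthogonality, and $\alpha\perp\beta_r$ for all $r$ contradicts termination of the cascade.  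No dominance argument, no root-string lemma, no concern about reducedness or about whether the orthogonal complement is a genuine root system: maximality alone does all the work.  Your direct route (pick the first $\beta_r$ not orthogonal to $\alpha$, then show $\alpha\in\Delta^+_r$) is the same induction unwound from the other end, so the two arguments are really contrapositives of one another; the paper's version simply avoids the bookkeeping.

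One caution: the step you flag as ``this needs to be checked'' --- that $\alpha\perp\beta_s$ forces $\alpha\notin\Delta^+_s$ for $s<r$ --- is \emph{not} covered by your wrinkles (i)--(iii), which all concern the other step $\beta_r-\alpha\in\Delta^+$.  The missing check is easy (if $\alpha\perp\beta_s$ and $\beta_s-\alpha$ were a root then $s_\alpha(\beta_s-\alpha)=\beta_s+\alpha$ would be a root orthogonal to $\beta_1,\dots,\beta_{s-1}$ and strictly larger than $\beta_s$, contradicting maximality), but do not reach for Lemma~\ref{layers2} to supply it, since the proof of Lemma~\ref{layers2} invokes the present lemma.
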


\begin{proof}  Suppose that $\alpha \not\in \{\beta_1, \dots , \beta_m\}$
and that $\alpha \not\in \Delta^+_r$ for any $r$.  As $\alpha \notin \Delta^+_1$ it is
strongly orthogonal to $\beta_1$.  Then as $\alpha \notin \Delta^+_2$ it is
strongly orthogonal to $\beta_2$ as well.  Continuing, $\alpha$ is strongly
orthogonal to each of the $\beta_r$, contradicting maximality of
$\{\beta_1, \dots , \beta_m\}$.
\end{proof}

\begin{lemma}\label{layers2}
The set $\Delta^+_r\cup \{\beta_r\}  
= \{\alpha \in \Delta^+ \mid \alpha \perp \beta_i \text{ for } i < r
\text{ and } \langle \alpha, \beta_r\rangle > 0\}.$
In particular, $[\gm_r,\gm_s] \subset \gm_t$ where $t = \min\{r,s\}$.
\end{lemma}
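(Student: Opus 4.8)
The plan is to first establish the displayed description of $\Delta^+_r\cup\{\beta_r\}$ and then read off the bracket relation from it. Throughout put
$\Phi_r=\{\gamma\in\Delta(\gg,\ga)\mid\gamma\perp\beta_i\text{ for all }i<r\}$,
a root subsystem of $\Delta(\gg,\ga)$ (with $\Phi_1=\Delta(\gg,\ga)$); by Kostant's construction $\beta_r$ is a maximal element of $\Phi_r\cap\Delta^+(\gg,\ga)$, hence the highest root of the irreducible component of $\Phi_r$ that contains it, and $\gm_r$ is the sum of the restricted root spaces $\gg_\alpha$ with $\alpha\in\Delta^+_r\cup\{\beta_r\}$. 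The recurring tool is the $\theta$-root-string argument for the highest root $\theta$ of an irreducible system: if $\alpha$ is a positive root with $\alpha\ne\theta$ then $\alpha+\theta$ is not a root (it would exceed $\theta$), so $\langle\alpha,\theta^\vee\rangle\ge0$; and $\langle\alpha,\theta^\vee\rangle\ge1$ precisely when $\alpha-\theta$ is a root, in which case $\theta-\alpha$, having nonnegative simple-root coordinates, is a positive root.

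For the inclusion ``$\subseteq$'' the case $\alpha=\beta_r$ is immediate. Let $\alpha\in\Delta^+_r$. By Lemma \ref{fill-out}, $\alpha$ is none of the $\beta_i$ and lies in no $\Delta^+_j$ with $j<r$, so $\beta_j-\alpha\notin\Delta^+(\gg,\ga)$ for $j<r$. An induction on $j$ shows $\alpha\perp\beta_j$ for all $j<r$: granting $\alpha\in\Phi_j$, the $\beta_j$-root-string argument inside $\Phi_j$ gives that $\langle\alpha,\beta_j\rangle<0$ would make $\alpha+\beta_j$ a positive root of $\Phi_j$ exceeding $\beta_j$, while $\langle\alpha,\beta_j\rangle>0$ would make $\beta_j-\alpha$ a positive root, i.e. $\alpha\in\Delta^+_j$; both are excluded, so $\langle\alpha,\beta_j\rangle=0$ and $\alpha\in\Phi_{j+1}$. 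Thus $\alpha\in\Phi_r$, and since $\beta_r-\alpha\in\Delta^+(\gg,\ga)$ the element $\alpha-\beta_r$ is a root, which forces $\langle\alpha,\beta_r\rangle>0$ by the same argument applied to $\beta_r$ in $\Phi_r$. For ``$\supseteq$'' take $\alpha\in\Delta^+(\gg,\ga)$ with $\alpha\perp\beta_i$ for $i<r$ and $\langle\alpha,\beta_r\rangle>0$, and assume $\alpha\ne\beta_r$. Then $\alpha$ is not any $\beta_i$ (for $i<r$ because $\langle\beta_i,\beta_i\rangle\ne0$, for $i>r$ because $\beta_i\perp\beta_r$), and $\alpha\notin\Delta^+_j$ for $j<r$, since by the direction just proved every member of $\Delta^+_j$ has positive inner product with $\beta_j$ whereas $\alpha\perp\beta_j$. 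Now $\alpha\in\Phi_r$ and $\langle\alpha,\beta_r\rangle>0$ put $\alpha$ in the component of $\Phi_r$ with highest root $\beta_r$, so the root-string argument makes $\alpha-\beta_r$ a root and $\beta_r-\alpha$ a positive root; hence $\alpha\in\Delta^+\setminus(\Delta^+_1\cup\dots\cup\Delta^+_{r-1})$ with $\beta_r-\alpha\in\Delta^+(\gg,\ga)$, i.e. $\alpha\in\Delta^+_r$. This proves the set equality.

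For the bracket relation we may take $r\le s$, so $t=r$, and use $[\gg_\alpha,\gg_\gamma]\subseteq\gg_{\alpha+\gamma}$ (with $\gg_{\alpha+\gamma}=0$ when $\alpha+\gamma$ is not a root). It suffices to show that if $\alpha\in\Delta^+_r\cup\{\beta_r\}$, $\gamma\in\Delta^+_s\cup\{\beta_s\}$ and $\alpha+\gamma\in\Delta(\gg,\ga)$, then $\alpha+\gamma\in\Delta^+_r\cup\{\beta_r\}$. Indeed $\alpha+\gamma$ is positive; it is orthogonal to every $\beta_i$ with $i<r$ because both summands are (using $i<r\le s$ for $\gamma$); and $\langle\alpha+\gamma,\beta_r\rangle=\langle\alpha,\beta_r\rangle+\langle\gamma,\beta_r\rangle>0$, since $\langle\alpha,\beta_r\rangle>0$ while $\langle\gamma,\beta_r\rangle>0$ if $s=r$ and $\langle\gamma,\beta_r\rangle=0$ if $s>r$. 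By the set equality $\alpha+\gamma\in\Delta^+_r\cup\{\beta_r\}$, hence $[\gm_r,\gm_s]\subseteq\gm_r=\gm_t$.

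The one delicate point is the orthogonality step in ``$\subseteq$'': it requires running the root-string argument inductively inside the nested subsystems $\Phi_j$, and uses that the cascade's ``maximum'' means the highest root of an irreducible component of the possibly reducible restricted root system — the subtlety reflected in the non-uniqueness of $\beta_1$. Everything past the set equality is then a purely formal computation with root spaces.
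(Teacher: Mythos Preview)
Your proof is correct and follows essentially the same route as the paper's: both establish the two inclusions via the same root-string arguments (that $\beta_j+\alpha$ cannot be a root by maximality, and that $\langle\alpha,\beta_j\rangle>0$ forces $\beta_j-\alpha\in\Delta^+$), and then read off the bracket relation by checking that $\alpha+\gamma$ satisfies the inner-product criterion. Your packaging is a bit cleaner---you isolate the highest-root root-string tool and the subsystems $\Phi_j$ explicitly, and you prove ``$\subseteq$'' first so that ``$\supseteq$'' can quote it to exclude $\alpha\in\Delta^+_j$ for $j<r$---but the substance is the same as the paper's inductive argument.
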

\begin{proof}
Let $\alpha \in \Delta^+$ such that (i) $\alpha \perp \beta_i \text{ for } 
i < r$ and (ii) $\langle \alpha, \beta_r\rangle > 0$.  Here (ii) shows that
$\beta_r - \alpha$ is a root.  If it's negative then $\alpha > \beta_r$,
contradicting maximality of $\beta_r$ for the property of being orthogonal
to $\beta_i \text{ for every } i < r$. So $\beta_r - \alpha \in \Delta^+$.
Let $s$ be the smallest integer such that $\alpha \in \Delta^+_s$.  This
relies on Lemma \ref{fill-out}.  As argued a moment ago, 
$\beta_s + \alpha$ is not a root.  If $s < r$ then (i) says that $\alpha$
is strongly orthogonal to $\beta_s$\,, contradicting $\alpha \in \Delta^+_s$.
Thus $r = s$ and $\alpha \in \Delta^+_r$\,.
\medskip

Conversely we want to show that $\alpha \in \Delta^+_r$ implies
$\alpha \perp \beta_i \text{ for } i < r
\text{ and } \langle \alpha, \beta_r\rangle > 0$.  This is clear for 
$r = 1$.  We assume it for $r < t$, for a fixed $t \leqq m$, and prove it 
for $r = t$.
Let $\alpha \in \Delta^+_t$.  If $\alpha \not\perp \beta_r$ where $r < t$,
and $\alpha + \beta_r$ is a root, then $\alpha + \beta_r \in \Delta^+_s$
where $s < r$, and $\langle \alpha + \beta_r, \beta_s \rangle > 0$.
That is impossible because $\alpha \perp \beta_s \perp \beta_r$.
If $\alpha \not\perp \beta_r$ now $\beta_r - \alpha$ is a root.  It is
positive by the maximality property of $\beta_r$, so $\alpha \in
\Delta^+_r$, contradicting $\alpha \in \Delta^+_t$ with $r < t$.  
Thus $\alpha \perp \beta_r$ for all $r < t$.  As argued before,
$\alpha + \beta_t$ is not a root.  Since $\beta_t - \alpha \in \Delta^+$
now $\langle\alpha,\beta_t\rangle > 0$.  That completes the induction.
\medskip

Finally, let $\alpha \in \Delta^+_r \cup \{\beta_r\}$, 
$\gamma \in \Delta^+_s \cup \{\beta_s\}$, and
$t = \min\{r,s\}$.  Suppose that $\alpha + \gamma$ is a root.  If
$i < t$ then $\langle \alpha + \gamma, \beta_i\rangle =
\langle \alpha, \beta_i\rangle + \langle \gamma, \beta_i\rangle
= 0$, and $\langle \alpha + \gamma, \beta_t\rangle >0$ because at least
one of $\langle \alpha, \beta_t\rangle$ and $\langle\gamma, \beta_t\rangle$
is positive.
\end{proof}

Lemma \ref{fill-out} shows that the Lie algebra $\gn$ of $N$ is the
vector space direct sum of its subspaces
\begin{equation}\label{def-m}
\gm_r = \gg_{\beta_r} + {\sum}_{\Delta^+_r}\, \gg_\alpha 
\text{ for } 1\leqq r\leqq m
\end{equation}
and Lemma \ref{layers2} shows that $\gn$ has an increasing foliation
by ideals
\begin{equation}\label{def-filtration}
\gn_r = \gm_1 + \gm_2 + \dots + \gm_r \text{ for } 1 \leqq r \leqq m.
\end{equation}
Now we will see that the corresponding group level decomposition
$N = M_1M_2\dots M_m$ and the semidirect product decompositions
$N_r = N_{r-1}\rtimes M_r$ satisfy all the requirements of (\ref{setup}).
\medskip

The structure of $\Delta^+_r$, and later of $\gm_r$, is exhibited by a 
particular Weyl group element of $\Delta(\gg,\ga)$.  Denote
\begin{equation}
s_{\beta_r} \text{ is the Weyl group reflection in } \beta_r
\text{ and } \sigma_r: \Delta(\gg,\ga) \to \Delta(\gg,\ga) \text{ by }
\sigma_r(\alpha) = -s_{\beta_r}(\alpha).
\end{equation}
Note that $\sigma_r(\beta_s) = -\beta_s$ for $s \ne r$, $+\beta_s$ if $s = r$.
If $\alpha \in \Delta^+_r$ we still have $\sigma_r(\alpha) \perp \beta_i$
for $i < r$ and $\langle \sigma_r(\alpha), \beta_r\rangle > 0$.  If
$\sigma_r(\alpha)$ is negative then $\beta_r - \sigma_r(\alpha) > \beta_r$
contradicting the maximality property of $\beta_r$.  Thus, using 
Lemma \ref{layers2}, $\sigma_r(\Delta^+_r) = \Delta^+_r$.

\begin{lemma} \label{layers-nilpotent}
If $\alpha \in \Delta^+_r$ then $\alpha + \sigma_r(\alpha) = \beta_r$.
{\rm (}Of course it is possible that 
$\alpha = \sigma_r(\alpha) = \tfrac{1}{2}\beta_r$ when 
$\tfrac{1}{2}\beta_r$ is a root.{\rm ).}
If $\alpha, \alpha' \in \Delta^+_r$ and $\alpha + \alpha' \in \Delta(\gg,\ga)$
then $\alpha + \alpha' = \beta_r$\,.
\end{lemma}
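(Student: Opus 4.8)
The plan is to reduce everything to a statement about a single Cartan integer. Since the Weyl reflection satisfies $s_{\beta_r}(\alpha)=\alpha-\langle\alpha,\beta_r^\vee\rangle\,\beta_r$, where $\langle\alpha,\beta_r^\vee\rangle:=2\langle\alpha,\beta_r\rangle/\langle\beta_r,\beta_r\rangle$, we get $\sigma_r(\alpha)=-s_{\beta_r}(\alpha)=\langle\alpha,\beta_r^\vee\rangle\,\beta_r-\alpha$, hence $\alpha+\sigma_r(\alpha)=\langle\alpha,\beta_r^\vee\rangle\,\beta_r$. So the first assertion is exactly the claim that $\langle\alpha,\beta_r^\vee\rangle=1$ for every $\alpha\in\Delta^+_r$. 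Lemma \ref{layers2} already gives $\langle\alpha,\beta_r\rangle>0$, so $\langle\alpha,\beta_r^\vee\rangle\geq1$; the content is the reverse inequality $\langle\alpha,\beta_r^\vee\rangle\leq1$.

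To obtain that bound I would work inside the sub--root system $\Psi_r=\{\gamma\in\Delta(\gg,\ga)\mid\gamma\perp\beta_i\text{ for }i<r\}$ with positive system $\Psi_r^+=\Psi_r\cap\Delta^+(\gg,\ga)$; by the characterization in Lemma \ref{layers2} it contains $\beta_r$ and all of $\Delta^+_r$. The simple roots of $\Psi_r$ are positive restricted roots, so the intrinsic dominance order of $\Psi_r$ is finer than the restriction of the dominance order of $\Delta(\gg,\ga)$; since $\beta_r$ is by construction a maximal element of $\Psi_r^+$ for the latter, it is maximal for the former, i.e.\ $\beta_r$ is the highest root of the irreducible component $\Psi_r^{\circ}$ of $\Psi_r$ containing it, and in particular a root of maximal length in $\Psi_r^{\circ}$. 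Now fix $\alpha\in\Delta^+_r$. Because $\langle\alpha,\beta_r\rangle>0$ and distinct irreducible components of $\Psi_r$ are mutually orthogonal, $\alpha$ lies in $\Psi_r^{\circ}$; and $\alpha\neq\pm\beta_r$, since $\alpha$ is positive and $\beta_r\notin\Delta^+_r$ (Lemma \ref{fill-out}). The standard estimate $\langle\alpha,\beta_r^\vee\rangle\langle\beta_r,\alpha^\vee\rangle=4\cos^2\angle(\alpha,\beta_r)\leq4$, together with $|\beta_r|\geq|\alpha|$, forces $\langle\alpha,\beta_r^\vee\rangle\in\{-1,0,1\}$ whenever $\alpha\neq\pm\beta_r$; combined with $\langle\alpha,\beta_r^\vee\rangle\geq1$ this yields $\langle\alpha,\beta_r^\vee\rangle=1$, hence $\alpha+\sigma_r(\alpha)=\beta_r$. (When $\tfrac12\beta_r$ is a root this is precisely the degenerate case $\alpha=\sigma_r(\alpha)=\tfrac12\beta_r$ noted in the statement.)

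For the last assertion, take $\alpha,\alpha'\in\Delta^+_r$ with $\alpha+\alpha'\in\Delta(\gg,\ga)$. Then $\alpha+\alpha'$ is a positive root orthogonal to every $\beta_i$ with $i<r$, and by the first part $\langle\alpha+\alpha',\beta_r\rangle=\langle\alpha,\beta_r\rangle+\langle\alpha',\beta_r\rangle=\langle\beta_r,\beta_r\rangle>0$; so Lemma \ref{layers2} places $\alpha+\alpha'$ in $\Delta^+_r\cup\{\beta_r\}$. If $\alpha+\alpha'$ were in $\Delta^+_r$, the first part applied to it would give $\langle\alpha+\alpha',\beta_r^\vee\rangle=1$, contradicting the value $\langle\alpha+\alpha',\beta_r^\vee\rangle=2$ just computed. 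Hence $\alpha+\alpha'=\beta_r$.

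The one genuinely delicate step is the middle one: converting the ambient maximality of $\beta_r$ (maximality in $\Delta^+(\gg,\ga)$ among roots orthogonal to $\beta_1,\dots,\beta_{r-1}$) into the intrinsic fact that $\beta_r$ is the highest, hence longest, root of its component $\Psi_r^{\circ}$. That longest--root property is exactly what rules out a Cartan integer $2$ or $3$ (the potentially awkward $C$--, $BC$--, and $G_2$--type configurations), and it is where the explicit root--structure computations of the appendix may be brought in if one prefers a case--by--case check to the uniform argument sketched here.
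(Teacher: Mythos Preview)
Your argument is correct and runs along the same line as the paper's: both reduce the first assertion to $\langle\alpha,\beta_r^\vee\rangle=1$ via the identity $\alpha+\sigma_r(\alpha)=\langle\alpha,\beta_r^\vee\rangle\,\beta_r$. The paper justifies $\langle\alpha,\beta_r^\vee\rangle=1$ by a one--line appeal to $\beta_r$ being nonmultipliable; you instead supply the structural reason explicitly --- $\beta_r$ is the highest root of its irreducible component of $\Psi_r$, hence a long root there, which caps the Cartan integer at $1$ for any $\alpha\neq\pm\beta_r$. Your dominance--order comparison between $\Psi_r$ and the ambient $\Delta(\gg,\ga)$ is the real content of that step, and it is sound.

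For the second assertion the two approaches are equivalent rephrasings of one idea. You compute $\langle\alpha+\alpha',\beta_r^\vee\rangle=2$ directly and note this is incompatible with $\alpha+\alpha'\in\Delta^+_r$; the paper reaches the same contradiction by writing $(\alpha+\alpha')+\sigma_r(\alpha+\alpha')=(\alpha+\sigma_r(\alpha))+(\alpha'+\sigma_r(\alpha'))=2\beta_r$ and invoking the first part, after a maximality choice of $\alpha$ that your formulation renders unnecessary.
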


\begin{proof} 
If $\alpha \in \Delta^+_r$ with $\sigma_r(\alpha) = \alpha$ then 
$s_{\beta_r}(\alpha) = -\alpha$ so $\alpha$ is proportional to $\beta_r$.
As $\beta_r$ is nonmultipliable and $\langle \alpha , \beta_r \rangle > 0$
that forces $\alpha = \tfrac{1}{2}\beta_r$.  In particular $\alpha +
\sigma_r(\alpha) = \beta_r$.  
\medskip

Now suppose $\alpha \in \Delta^+_r$ with $\sigma_r(\alpha) \ne \alpha$.  Then
$\alpha + \sigma_r(\alpha) = 
\alpha - s_{\beta_r}(\alpha) = \alpha -(\alpha - \tfrac{2\langle \alpha , \beta_r \rangle}
{\langle \beta_r,\beta_r\rangle}\beta_r) =
\tfrac{2\langle \alpha , \beta_r \rangle}
{\langle \beta_r,\beta_r\rangle}\beta_r$.  
As $\langle \alpha , \beta_r \rangle > 0$ 
and $\beta_r$ is nonmultipliable this forces $\alpha + \sigma_r(\alpha)
= \beta_r$.
\medskip

Suppose that there exist $\alpha, \alpha' \in \Delta^+_r$
such that $\alpha + \alpha' = \alpha'' \in \Delta(\gg,\ga)$ but 
$\alpha'' \ne \beta_r$\,.  Fix such a pair $\{\alpha,\alpha'\}$
with $\alpha$ maximal for that property.  Then $\alpha''$ lacks that
property.  So $\beta_r = \alpha'' + \sigma_r(\alpha'') = 
(\alpha + \alpha') + \sigma_r(\alpha + \alpha')
= (\alpha + \sigma_r(\alpha)) + (\alpha' + \sigma_r(\alpha')) = 2\beta_r$\,.
Thus the specified $\alpha$ cannot exist.
\end{proof}

Now we are in a position to start the proof of the main technical result
of this section -- that the $M_r$ have square integrable representations.
For that it suffices to consider the case where $\gg$ is simple as a real
Lie algebra and run through some possibilities:

\begin{lemma}\label{pairing}
Let $\gn$ be a nilpotent Lie algebra, $\gz$ its center, and $\gv$ a
vector space complement to $\gz$ in $\gn$.  Suppose that we have
vector space direct sum decompositions
$\gv = \gu + \gu'$, $\gu = \sum \gu_a$ and $\gu' = \sum \gu'_a$\,, and
$\gz = \sum \gz_b$ with $\dim \gz_b = 1$.  Suppose further that 
{\rm (i)} each $[\gu_a, \gu_a] = 0 = [\gu'_a,\gu'_a]$, 
{\rm (ii)} if $a_1 \ne a_2$ then $[\gu_{a_1} , \gu'_{a_2}] = 0$ and 
{\rm (iii)} for each index $a$ there is a unique $b_a$ such that 
$\gu_a \otimes \gu'_a \to \gz_{b_a}$\,, by $u \otimes u' \mapsto [u,u']$,
is a nondegenerate pairing.  Then $\gn$ is a direct sum of Heisenberg
algebras  $\gz_{b_a} + \gu_a + \gu'_a$ and the commutative algebra that is the
sum of the remaining $\gz_b$\,.
\end{lemma}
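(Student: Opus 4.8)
The plan is to show that the hypotheses force $\gn$ to split, as a Lie algebra, into the claimed direct sum, by checking that the bracket of any two summands lands inside one of them and that each Heisenberg piece is genuinely a Heisenberg algebra. First I would verify that $\gz_{b_a} + \gu_a + \gu'_a$ is a subalgebra isomorphic to the Heisenberg algebra of the appropriate dimension: by (i) the brackets $[\gu_a,\gu_a]$ and $[\gu'_a,\gu'_a]$ vanish, by (iii) the bracket $\gu_a\otimes\gu'_a \to \gz_{b_a}$ is a nondegenerate pairing into a one--dimensional space, and since $\gz_{b_a}\subset\gz$ it is central; a nondegenerate pairing of two spaces of equal dimension $k$ into a line is exactly the defining datum of $\gh_k$, so this summand is $\gh_{k_a}$ with $\dim\gu_a = \dim\gu'_a = k_a$.

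Next I would check that the remaining central directions $\gz_0 := \sum_{b \notin \{b_a\}} \gz_b$ together with the summands above exhaust $\gn$ and bracket trivially against everything: $\gz_0$ is central by hypothesis, and the listed pieces span $\gn$ because $\gn = \gz + \gv = (\sum\gz_b) + \gu + \gu' = (\sum\gz_b) + (\sum\gu_a) + (\sum\gu'_a)$. The key point is that distinct Heisenberg blocks commute. For $a_1 \ne a_2$ one must see that $[\gu_{a_1},\gu_{a_2}]$, $[\gu'_{a_1},\gu'_{a_2}]$, $[\gu_{a_1},\gu'_{a_2}]$ and $[\gu'_{a_1},\gu'_{a_2}]$ all vanish. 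The cross term $[\gu_{a_1},\gu'_{a_2}] = 0$ is exactly hypothesis (ii) (and symmetrically $[\gu'_{a_1},\gu_{a_2}] = 0$). For the remaining two, note that any bracket in $\gn$ lands in $\gz$, so $[\gu_{a_1},\gu_{a_2}] \subset \gz$; I would then argue that this bracket pairs trivially, hence is zero because the only nondegenerate pairings available are the $\gu_a \otimes \gu'_a$ ones of (iii) — more precisely, if $[\gu_{a_1},\gu_{a_2}]$ had a nonzero component in some $\gz_{b}$ it would have to be the $\gz_{b_a}$ attached to one of the indices, and testing against the defining pairing together with the Jacobi identity and (i)–(ii) forces it to vanish.

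I expect the last point — ruling out $[\gu_{a_1},\gu_{a_2}] \ne 0$ and $[\gu'_{a_1},\gu'_{a_2}] \ne 0$ for $a_1 \ne a_2$ — to be the main obstacle, since hypotheses (i)–(iii) as stated directly control only same--index self--brackets, opposite--index cross brackets, and the same--index $\gu_a$–$\gu'_a$ pairing. The cleanest route I see is: for fixed $a_1 \ne a_2$, pick $u \in \gu_{a_1}$ and apply the Jacobi identity to $[[u, x], y]$ for suitable $x,y$ ranging over the $\gu$'s and $\gu'$'s; since $[\gn,\gn]\subset\gz$ and $[\gz,\gn] = 0$, all double brackets vanish, which combined with nondegeneracy of the pairing in (iii) pins down $[u,\gu_{a_2}] = 0$. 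If the Jacobi argument does not immediately close, the fallback is to invoke that $\gz$ is the \emph{full} center, so any $v \in \gv$ with $[v, \gn] = 0$ would lie in $\gz \cap \gv = 0$; choosing $v$ in a hypothetical kernel built from the offending brackets yields the contradiction. Once all cross brackets are killed, the decomposition $\gn = \gz_0 \oplus \bigoplus_a (\gz_{b_a} + \gu_a + \gu'_a)$ is a Lie algebra direct sum, which is the assertion.
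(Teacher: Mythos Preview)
The paper's own proof is the single sentence ``The assertion is obvious.'' Your proposal is far more careful than that, and in fact you put your finger on a genuine issue: hypotheses (i)--(iii) as written do \emph{not} control $[\gu_{a_1},\gu_{a_2}]$ or $[\gu'_{a_1},\gu'_{a_2}]$ for $a_1\ne a_2$, and the lemma as literally stated is false. A counterexample: take $\gu_1=\R x_1$, $\gu'_1=\R y_1$, $\gu_2=\R x_2$, $\gu'_2=\R y_2$, and $\gz=\R z_1\oplus\R z_2\oplus\R z_3$, with nonzero brackets $[x_1,y_1]=z_1$, $[x_2,y_2]=z_2$, $[x_1,x_2]=z_3$. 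This is a two--step nilpotent Lie algebra with center exactly $\gz$, and (i)--(iii) hold (each $\gu_a$, $\gu'_a$ is one--dimensional, the cross terms $[\gu_1,\gu'_2]$ and $[\gu_2,\gu'_1]$ vanish, and the pairings into $\gz_1$, $\gz_2$ are nondegenerate). But $[x_1,x_2]=z_3$ crosses between the would--be summands, so $\gn$ is not a Lie algebra direct sum of the two Heisenberg blocks and $\R z_3$.

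Your proposed repairs do not close this. The Jacobi identity is vacuous here: since $[\gn,\gn]\subset\gz$ and $[\gz,\gn]=0$, every triple bracket is zero, so Jacobi yields no constraint whatsoever on the bilinear map $\gv\times\gv\to\gz$. Your fallback ``full center'' argument also fails in the counterexample, since every nonzero $v\in\gv$ already brackets nontrivially with something via the pairings in (iii), so there is no kernel element to extract a contradiction from.

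The right reading is that condition (i) is intended (or should be strengthened) to say $[\gu,\gu]=0=[\gu',\gu']$ globally, not just block by block. In the only place the lemma is invoked (the split--real--form case, Lemma~\ref{split-case}), this stronger hypothesis holds automatically: by Lemma~\ref{layers-nilpotent}, any two roots in $\Delta^+_r$ summing to a root must sum to $\beta_r$, so once $\Delta^+_r$ is split into halves $\{\alpha\}$ and $\{\sigma_r(\alpha)\}$ one has $[\gu,\gu]=0=[\gu',\gu']$. With that stronger form of (i), the conclusion really is immediate, which is presumably what the author had in mind.
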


\begin{lemma}\label{split-case}
If $\gg$ is the split real form of $\gg_\C$ then each $M_r$ has square
integrable representations.
\end{lemma}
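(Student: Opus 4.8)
The plan is to reduce the statement to Lemma \ref{pairing} by exhibiting, for each $r$, a decomposition of $\gm_r$ of exactly the kind required by that lemma. First I would recall from (\ref{def-m}) that $\gm_r = \gg_{\beta_r} + \sum_{\alpha \in \Delta^+_r}\gg_\alpha$, and from Lemma \ref{layers2} that the center of $\gm_r$ is $\gz_r = \gg_{\beta_r}$ (one-dimensional, since $\beta_r \in \Delta_0^+(\gg,\ga)$ and $\gg$ is split, so all restricted root spaces are one-dimensional). The complement $\gv_r$ is $\sum_{\alpha \in \Delta^+_r}\gg_\alpha$. The key structural input is Lemma \ref{layers-nilpotent}: for $\alpha \in \Delta^+_r$ the only way $\alpha + \alpha'$ can be a root (with $\alpha' \in \Delta^+_r$) is if $\alpha + \alpha' = \beta_r$; combined with $\sigma_r(\Delta^+_r) = \Delta^+_r$ and $\alpha + \sigma_r(\alpha) = \beta_r$, this says the roots in $\Delta^+_r$ pair off under $\alpha \leftrightarrow \sigma_r(\alpha)$ into $\beta_r$.

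Next I would use this pairing to set up the hypotheses of Lemma \ref{pairing}. Since $\gg$ is split, $\tfrac12\beta_r$ is never a root, so $\sigma_r$ has no fixed point on $\Delta^+_r$ and the involution partitions $\Delta^+_r$ into pairs $\{\alpha, \sigma_r(\alpha)\}$. Pick one representative $\alpha$ from each pair; let $\gu_a = \gg_{\alpha}$, $\gu'_a = \gg_{\sigma_r(\alpha)}$, indexed by $a$ running over the chosen representatives, and let $\gz_{b_a} = \gg_{\beta_r}$ be the single one-dimensional center (here all $b_a$ coincide, which is allowed). Then (i) $[\gg_\alpha,\gg_\alpha] = 0$ because $2\alpha \notin \Delta(\gg,\ga)$ ($\alpha \in \Delta_0^+$ as $\Delta^+_r$ consists of nonmultipliable roots in the split case, or simply because $2\alpha$ would have to equal $\beta_r$ and $\beta_r$ is nonmultipliable); (ii) if $a_1 \ne a_2$ then $[\gg_{\alpha_1}, \gg_{\sigma_r(\alpha_2)}] = 0$ since $\alpha_1 + \sigma_r(\alpha_2)$ is not a root — by Lemma \ref{layers-nilpotent} it would have to be $\beta_r$, i.e. $\sigma_r(\alpha_2) = \beta_r - \alpha_1 = \sigma_r(\alpha_1)$, forcing $a_1 = a_2$; (iii) $[\gg_\alpha, \gg_{\sigma_r(\alpha)}] = \gg_{\beta_r}$ is a nondegenerate pairing onto $\gz_r$ because $\alpha + \sigma_r(\alpha) = \beta_r \in \Delta(\gg,\ga)$, so the bracket is nonzero, and each factor is one-dimensional. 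Lemma \ref{pairing} then gives that $\gm_r$ is a Heisenberg algebra (there are no leftover central directions here since $\gz_r$ is one-dimensional and it is used), hence $M_r$ has square integrable representations.

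The main obstacle is the bookkeeping around the degenerate-looking case $\alpha = \sigma_r(\alpha) = \tfrac12\beta_r$ flagged in Lemma \ref{layers-nilpotent}: I must verify it cannot occur in the split case, which follows because in a split real form the restricted root system coincides with an ordinary reduced root system, so no root is half of another — in particular $\tfrac12\beta_r \notin \Delta(\gg,\ga)$. A secondary point to check carefully is that $\Delta^+_r$ is nonempty precisely when $\gm_r$ is genuinely Heisenberg of positive rank, versus the boundary situation where $\Delta^+_r = \varnothing$ and $\gm_r = \gg_{\beta_r}$ is one-dimensional abelian; that degenerate abelian case also "has square integrable representations" trivially (it is its own center), so the conclusion holds uniformly. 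Once these small verifications are in place, the lemma is immediate from Lemma \ref{pairing}.
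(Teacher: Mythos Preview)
Your proposal is correct and follows essentially the same route as the paper: use that in the split case all restricted roots are indivisible, invoke Lemma~\ref{layers-nilpotent} (with the $\sigma_r$-pairing) to split $\Delta^+_r$ into matched pairs summing to $\beta_r$, and then feed this into Lemma~\ref{pairing} to conclude that $\gm_r$ is Heisenberg with one-dimensional center $\gg_{\beta_r}$. Your write-up is more explicit than the paper's in checking the three hypotheses of Lemma~\ref{pairing} and in handling the edge cases ($\tfrac12\beta_r \notin \Delta(\gg,\ga)$ and $\Delta^+_r = \varnothing$), but the argument is the same.
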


\begin{lemma}\label{complex-case}
If $\gg$ is simple but not absolutely simple then each $M_r$ has square
integrable representations.
\end{lemma}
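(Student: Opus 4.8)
The plan is to reduce to Lemma \ref{split-case} by using the fact that a real simple Lie algebra that is not absolutely simple is the realification of a complex simple Lie algebra. First I would recall the structure theory: if $\gg$ is simple but not absolutely simple, then $\gg = (\gg_0)_\R$ is the underlying real Lie algebra of a complex simple Lie algebra $\gg_0$, and $\gg_0 = (\gg_1)_\C$ where $\gg_1$ is the split real form of $\gg_0$. The first step is then purely structural bookkeeping: I would check that the Iwasawa data of $\gg$ is the complexification of that of $\gg_1$. Concretely, a maximal $\R$-split abelian subalgebra $\ga \subset \gg$ is the real span of the coroots inside a Cartan subalgebra $\gh_0$ of $\gg_0$; restriction identifies $\Delta(\gg,\ga)$ with the root system $\Delta(\gg_0,\gh_0) = \Delta(\gg_1,\ga_1)$, with each restricted root space $\gg_\alpha = (\gg_0)_\alpha$ a complex line, i.e.\ $\gg_\alpha$ is the realification $((\gg_1)_\alpha)_\C$, and $\gn = (\gn_1)_\C$. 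Since $\Delta(\gg,\ga)$ is reduced, no $\tfrac12\beta_r$ is a root, and the Kostant cascade $\{\beta_r\}$ together with the layers $\Delta^+_r$ of (\ref{layers}) for $\gg$ coincide with those for $\gg_1$. Hence, for each $r$, the subalgebra $\gm_r$ of (\ref{def-m}) for $\gg$ is the realification $(\gm_r^{(1)})_\C$ of the corresponding subalgebra $\gm_r^{(1)} \subset \gn_1$ for the split form.

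Next I would prove the elementary complexification principle: if $\gh$ is a real nilpotent Lie algebra whose group has square integrable representations --- equivalently, by Moore--Wolf \cite{MW1973}, if there is $\lambda \in \gz(\gh)^*$ with $b_\lambda$ nondegenerate on $\gh/\gz(\gh)$ --- then the realification $(\gh_\C)_\R$ has the same property. The center of $(\gh_\C)_\R$ is $(\gz(\gh))_\C$; letting $\lambda^\C : (\gz(\gh))_\C \to \C$ be the $\C$-linear extension of $\lambda$ and $\mu = \Re\lambda^\C$, the form $b_\mu$ on $(\gh/\gz(\gh))_\C$ is the real part of the $\C$-bilinear extension of $b_\lambda$, which is $\C$-nondegenerate. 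The real part of a nondegenerate $\C$-bilinear form $B$ on a complex vector space is $\R$-nondegenerate, since if $\Re B(x,\cdot)$ vanishes then so does $\Re B(x, i\,\cdot) = -\Im B(x,\cdot)$, forcing $B(x,\cdot) = 0$. Thus $b_\mu$ is nondegenerate and Moore--Wolf applies to $(\gh_\C)_\R$.

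Finally I would combine the two steps: by Lemma \ref{split-case} each $\gm_r^{(1)}$ has square integrable representations, so by the complexification principle each $\gm_r = (\gm_r^{(1)})_\C$ does as well, which is the assertion. The main obstacle is the first step --- verifying carefully that passing from $\gg_1$ to $\gg = (\gg_0)_\R$ complexifies the restricted-root-space data and leaves the cascade $\{\beta_r\}$ and the layers $\Delta^+_r$ (hence the $\gm_r$) unchanged up to realification; once that is in place, the rest is the short linear-algebra lemma above and an appeal to Lemma \ref{split-case}. (One could instead try a complex analogue of Lemma \ref{pairing}, writing $\gm_r$ as a sum of realified complex Heisenberg algebras $\gg_{\beta_r} + \gg_\alpha + \gg_{\sigma_r(\alpha)}$; but the common center $\gg_{\beta_r}$ is two-dimensional over $\R$ and each pairing $\gg_\alpha \otimes \gg_{\sigma_r(\alpha)} \to \gg_{\beta_r}$ is surjective, so the hypothesis $\dim\gz_b = 1$ of Lemma \ref{pairing} fails and the reduction to the split case is cleaner.)
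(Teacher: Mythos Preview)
Your proposal is correct and follows essentially the same line as the paper's proof: both identify $\gg$ as the realification of a complex simple algebra, compare with its split real form, observe that each $\gm_r$ is the realification of the complexification of the corresponding split $\gm_r^{(1)}$, and then invoke Lemma~\ref{split-case}. The only difference is that the paper quotes \cite[Theorem 2.1]{W1979} for the ``complexification principle'' (square integrability is preserved under passing to the realified complexification), whereas you supply a short direct linear-algebra argument showing that $\mu=\Re\lambda^\C$ gives a nondegenerate $b_\mu$; your version is thus a bit more self-contained but otherwise the same argument.
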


\begin{lemma}\label{quaternion-linear}
If $G$ is the quaternion special linear group $SL(n;\H)$ then 
$M_1$ has square integrable representations.
\end{lemma}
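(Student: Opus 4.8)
The plan is to make the structure of $\gn$ for $\gg = \mathfrak{sl}(n;\H)$ explicit enough that Lemma~\ref{pairing} applies to $\gm_1$. First I would recall the restricted root system: for $SL(n;\H)$ the restricted roots form a system of type $A_{n-1}$, but every restricted root space $\gg_\alpha$ has real dimension $4$ (since the relevant division algebra is $\H$), except that there are no multiple roots here, so $\Delta(\gg,\ga) = \Delta_0(\gg,\ga)$ is just $A_{n-1}$ with all root multiplicities equal to $4$. Thus $\beta_1$ is the highest root $\varepsilon_1 - \varepsilon_n$, and $\Delta^+_1 = \{\varepsilon_1 - \varepsilon_j : 2 \le j \le n-1\} \cup \{\varepsilon_i - \varepsilon_n : 2 \le i \le n-1\}$, while $\beta_1 \in \Delta^+_1 \cup \{\beta_1\}$ corresponds to the top-right "corner." So $\gm_1 = \gg_{\beta_1} + \sum_{\Delta^+_1}\gg_\alpha$ is, concretely, the span of the first-row and last-column quaternionic entries, with the quaternionic $(1,n)$ entry as center.

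Next I would identify the center $\gz_1 = \gg_{\beta_1} \cong \H \cong \R^4$ and the complement $\gv_1$, and set up the decomposition required by Lemma~\ref{pairing}: take $\gu_1 = \sum_{j=2}^{n-1}\gg_{\varepsilon_1 - \varepsilon_j}$ (the "row" part) and $\gu'_1 = \sum_{i=2}^{n-1}\gg_{\varepsilon_i - \varepsilon_n}$ (the "column" part). By Lemma~\ref{layers-nilpotent}, the only bracket of two elements of $\Delta^+_1$ that is a root is $\beta_1$ itself; concretely $[\gg_{\varepsilon_1-\varepsilon_j},\gg_{\varepsilon_i-\varepsilon_n}]$ is nonzero only when $i = j$, and then it lands in $\gg_{\beta_1}$. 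This gives conditions (i) and (ii) of Lemma~\ref{pairing} with the index set $\{2,\dots,n-1\}$ (here $\gz_1$ is not $1$-dimensional, so I would apply a mild variant of Lemma~\ref{pairing}: replace "$\dim\gz_b = 1$" by "$\gz$ irreducible as a module, all pairings landing in the same $\gz_1$" — or, better, observe directly that $b_\lambda$ on $\gv_1$ is, for generic $\lambda \in \gz_1^*$, the direct sum over $j$ of the quaternionic pairing $\gg_{\varepsilon_1-\varepsilon_j} \times \gg_{\varepsilon_j-\varepsilon_n} \to \gz_1$).

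For condition (iii) I need: for generic $\lambda \in \gz_1^* \cong \H^*$, the bilinear form $(u,u') \mapsto \lambda([u,u'])$ on $\gg_{\varepsilon_1-\varepsilon_j} \times \gg_{\varepsilon_j-\varepsilon_n}$ is nondegenerate. Since the bracket $\H \times \H \to \H$ here is (a scalar multiple of) quaternion multiplication $q \otimes q' \mapsto qq'$, composing with any nonzero $\lambda \in \H^*$ gives a nondegenerate real bilinear form on $\H \times \H$ (quaternion multiplication has no zero divisors). Hence $b_\lambda$ is nondegenerate on $\gv_1$ for every $\lambda$ with $\lambda \ne 0$, so $\Pf(b_\lambda) \ne 0$ on a nonempty Zariski-open set; by the Moore--Wolf criterion \cite{MW1973}, $M_1$ then has square integrable representations, with $\gz_1$ as its center.

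\textbf{Main obstacle.} The real difficulty is bookkeeping the quaternionic structure cleanly: one must check that $[\gg_{\varepsilon_1-\varepsilon_j},\gg_{\varepsilon_j-\varepsilon_n}]$ is genuinely the full $4$-dimensional $\gg_{\beta_1}$ and that the induced pairing is quaternion multiplication up to a nonzero constant (not, say, something that degenerates on a real hyperplane). This is a concrete but slightly delicate $\mathfrak{sl}(2;\H) \subset \mathfrak{sl}(n;\H)$ computation — everything reduces to the rank-one case $\mathfrak{sl}(2;\H) \cong \mathfrak{so}(5,1)$, where $\gm_1$ is the Heisenberg-type group with $4$-dimensional center, and nondegeneracy is classical. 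I would also need to note explicitly that the center of $\gm_1$ really is $\gz_1$ and nothing larger, which follows since $\beta_1$ is strongly orthogonal to no $\alpha \in \Delta^+_1$ that would enlarge the center, again a consequence of Lemma~\ref{layers-nilpotent}.
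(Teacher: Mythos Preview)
Your approach is correct and takes a genuinely different route from the paper's. The paper complexifies to $\mathfrak{sl}(2n;\C)$, lists the four ordinary roots $\psi_1+\dots+\psi_{2n-1}$, $\psi_2+\dots+\psi_{2n-1}$, $\psi_1+\dots+\psi_{2n-2}$, $\psi_2+\dots+\psi_{2n-2}$ that restrict to $\beta_1$, enumerates all root pairs summing to each of them, and then exhibits one particular $\lambda\in\gz_1^*$ (nonzero on the root spaces for the first and last of those four, zero on the middle two) for which $b_\lambda$ is nondegenerate on $\gv_1$. You instead stay in the restricted-root picture, identify each $\gg_\alpha$ with $\H$, observe that the bracket $\gg_{\varepsilon_1-\varepsilon_j}\times\gg_{\varepsilon_j-\varepsilon_n}\to\gg_{\beta_1}$ is literally quaternion multiplication of matrix entries, and use the division-algebra property of $\H$ to get nondegeneracy of $b_\lambda$ for \emph{every} nonzero $\lambda$. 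Your argument yields a stronger conclusion with almost no bookkeeping and makes the reduction to the rank-one block $\mathfrak{sl}(2;\H)\cong\mathfrak{so}(5,1)$ transparent; the paper's explicit enumeration of complex root pairs is heavier here but is the same machinery it reuses verbatim in the $E_{6,F_4}$ case, where no division-algebra shortcut is available. Your remark that Lemma~\ref{pairing} does not apply verbatim (since $\dim\gz_1=4$) is exactly right, and the direct nondegeneracy check you give bypasses it cleanly.
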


\begin{lemma}\label{cayley-collineations}
If $G$ is the group $E_{6,F_4}$ of collineations of the Cayley
projective plane then $M_1$ has square integrable representations.
\end{lemma}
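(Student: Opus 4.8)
The plan is to verify the hypotheses of Lemma \ref{pairing} for the nilradical $\gm_1$ of $G = E_{6,F_4}$, and then invoke that lemma to conclude that $\gm_1$ is a direct sum of Heisenberg algebras with a trivial abelian summand, hence that $M_1$ has square integrable representations. First I would recall that for $E_{6,F_4}$ the restricted root system $\Delta(\gg,\ga)$ is of type $A_2$ (this is the rank--one-per-simple-factor structure coming from the fact that $E_{6,F_4}$ is the real form whose restricted root system has rank $2$), and the Kostant cascade produces $\beta_1$ the highest restricted root. The key point is to understand $\Delta^+_1$, the set of positive restricted roots $\alpha$ with $\beta_1 - \alpha$ also a positive root; by Lemma \ref{layers2} this is exactly $\{\alpha \in \Delta^+ \mid \langle \alpha,\beta_1\rangle > 0\}$, and together with $\beta_1$ it spans $\gm_1$ as in (\ref{def-m}).

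The next step is to set up the decomposition demanded by Lemma \ref{pairing}: put $\gz = \gz_1 = \gg_{\beta_1}$ (one-dimensional since $\beta_1$ is nonmultipliable, and the root space $\gg_{\beta_1}$ over $\ga$ may be higher-dimensional — here I would need to track the actual restricted root multiplicities for $E_{6,F_4}$, which are the crucial numerical input). Then $\gv = \gm_1 \cap \gv_1 = \sum_{\Delta^+_1} \gg_\alpha$, and I would pair off each $\alpha \in \Delta^+_1$ with $\sigma_1(\alpha)$, which by Lemma \ref{layers-nilpotent} satisfies $\alpha + \sigma_1(\alpha) = \beta_1$. This gives the splitting $\gv = \gu + \gu'$ by grouping root spaces: $\gu_a = \gg_\alpha$ and $\gu'_a = \gg_{\sigma_1(\alpha)}$ for a chosen representative of each pair, with the index $a$ running over the pairs. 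Hypotheses (i) and (ii) of Lemma \ref{pairing} follow from Lemma \ref{layers-nilpotent}: $[\gg_\alpha,\gg_\alpha] \subset \gg_{2\alpha}$, and $2\alpha \ne \beta_1$ unless $\alpha = \tfrac12\beta_1$, in which case the root-string/multiplicity analysis shows the bracket still lands appropriately; and $[\gg_\alpha, \gg_{\alpha'}] \subset \gg_{\alpha+\alpha'}$ is zero unless $\alpha + \alpha' = \beta_1$, i.e. unless $\alpha' = \sigma_1(\alpha)$. Hypothesis (iii), nondegeneracy of the pairing $\gg_\alpha \otimes \gg_{\sigma_1(\alpha)} \to \gg_{\beta_1}$, is the standard fact that the Lie bracket between opposite-pairing root spaces into the top space is nondegenerate, which follows from the invariant bilinear form being nondegenerate on $\gg_\alpha \times \gg_{-\alpha}$ combined with strong orthogonality bookkeeping.

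The main obstacle I expect is the exceptional, non-reduced behavior of the $E_{6,F_4}$ restricted root system and its multiplicities: unlike the split case (Lemma \ref{split-case}) where all multiplicities are $1$, here the root spaces $\gg_\alpha$ have multiplicity $8$ (coming from the octonions) and $\beta_1/2$ may or may not be a restricted root, so one must carefully check that the pairing in (iii) remains nondegenerate at the level of these $8$-dimensional spaces and that the half-root contribution $\gg_{\beta_1/2}$ — if present — pairs with itself nondegenerately into $\gg_{\beta_1}$. Concretely, I would verify that $\dim \gg_{\beta_1} = 1$ forces all of $\Delta^+_1$ to pair under $\sigma_1$ into this single line, and that the alternating form $b_{\lambda_1}(x,y) = \lambda_1([x,y])$ on $\gm_1/\gz_1$ is nondegenerate for $\lambda_1 \ne 0$ — equivalently that $\Pf(b_{\lambda_1}) \not\equiv 0$. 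Once Lemma \ref{pairing} applies, $\gm_1$ is a sum of a Heisenberg algebra and a central abelian part, and the Mal'cev--Moore--Wolf criterion gives that $M_1$ has square integrable representations, completing the proof. The case analysis here parallels Lemma \ref{quaternion-linear}, with the octonion multiplicity $8$ replacing the quaternion multiplicity $4$.
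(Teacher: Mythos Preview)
Your plan has a genuine gap at exactly the point you flag as ``the main obstacle'': for $E_{6,F_4}$ the restricted root $\beta_1$ has multiplicity $8$, so $\dim\gg_{\beta_1}=8$, not $1$. (The restricted root system is reduced of type $A_2$ with uniform multiplicity $8$; in particular $\tfrac12\beta_1$ is not a restricted root.) This is precisely why $E_{6,F_4}$ is singled out for separate treatment --- look at the proof of Lemma~\ref{start-induction}, where the case $\dim\gg_{\beta_1}=1$ is dispatched in one line and the cases $SL(n;\H)$ and $E_{6,F_4}$ are exactly the residual ones with $\dim\gg_{\beta_1}>1$. So the sentence ``I would verify that $\dim\gg_{\beta_1}=1$'' describes a verification that fails.

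Once $\gz_1$ is $8$--dimensional, your proposed invocation of Lemma~\ref{pairing} at the restricted--root level breaks down. With $\gu=\gg_{\alpha_1}$ and $\gu'=\gg_{\alpha_2}$ (each $8$--dimensional) the bracket $[\gu,\gu']$ surjects onto the $8$--dimensional $\gg_{\beta_1}$, so it does not land in any single one--dimensional $\gz_b$, and hypothesis (iii) of Lemma~\ref{pairing} cannot even be formulated. Nor is it true that $b_{\lambda_1}$ is nondegenerate for \emph{every} nonzero $\lambda_1\in\gz_1^*$; one must exhibit a particular $\lambda_1$ with $\Pf(b_{\lambda_1})\ne 0$.

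The paper's proof works instead at the level of ordinary $E_6$ roots, where root spaces are one--dimensional. It lists the eight complex roots restricting to $\beta_1$ (these span $\gz_1$), then for each of them lists the pairs of complex roots summing to it. From this explicit data it picks $\lambda\in\gz_1^*$ supported on just two of the eight one--dimensional pieces --- the root spaces for $\widetilde{\beta}_1=\psi_1+2\psi_2+2\psi_3+3\psi_4+2\psi_5+\psi_6$ and for $\psi_1+\psi_3+\psi_4+\psi_5+\psi_6$ --- and checks directly that $b_\lambda$ is nondegenerate on $\gv_1$. So the correct route is the one you gesture at in your last sentence (``parallels Lemma~\ref{quaternion-linear}''): pass to complex root spaces and build $\lambda$ by hand, rather than trying to force Lemma~\ref{pairing} to apply over the restricted root decomposition.
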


\begin{lemma}\label{start-induction}
The group $M_1$ has square integrable representations.
\end{lemma}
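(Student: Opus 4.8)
The plan is to reduce Lemma~\ref{start-induction} to the cases already isolated in Lemmas~\ref{split-case}--\ref{cayley-collineations}. First I would observe that the decomposition $N = M_1M_2\dots M_m$ and the filtration are built entirely from the restricted root system $\Delta(\gg,\ga)$, so if $\gg = \gg^{(1)} \oplus \dots \oplus \gg^{(k)}$ is the decomposition into simple ideals, then $\gn$, the cascade $\{\beta_r\}$, the layers $\Delta^+_r$, and the $\gm_r$ all split as direct sums of the corresponding objects for the $\gg^{(j)}$. Hence $M_1$ (and indeed each $M_r$) is a direct product of Heisenberg-type factors coming from the simple summands, and a direct product of groups with square integrable representations again has square integrable representations (one just tensors the $\pi_{\lambda}$'s; the Pfaffian of a block-diagonal form is the product of the Pfaffians). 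So it suffices to treat $\gg$ simple as a real Lie algebra.

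Next I would split the simple case according to whether $\gg$ is absolutely simple. If $\gg$ is simple but not absolutely simple, then $\gg_\C$ is not simple and we are in the situation of Lemma~\ref{complex-case}, which already gives the conclusion for all $M_r$, in particular $M_1$. So assume $\gg$ is absolutely simple. Then I would invoke the Kostant/Borel--de~Siebenthal type description of $\gm_1$: because $\beta_1$ is the highest restricted root and the roots in $\Delta^+_1$ pair off under $\alpha \mapsto \sigma_1(\alpha)$ with $\alpha + \sigma_1(\alpha) = \beta_1$ by Lemma~\ref{layers-nilpotent}, the bracket $\gm_1 \cap \gv_1 \otimes \gm_1 \cap \gv_1 \to \gz_1 = \gg_{\beta_1}$ is, up to the sign analysis in Lemma~\ref{layers-nilpotent}, a nondegenerate alternating pairing; by Lemma~\ref{pairing} this makes $\gm_1$ a Heisenberg algebra (or a sum of a Heisenberg algebra with an abelian piece, which cannot happen here since $\gz_1$ is one-dimensional and every $\alpha \in \Delta^+_1$ genuinely pairs into it). A nilpotent Lie group whose Lie algebra is Heisenberg is square integrable in the sense of \cite{MW1973}, since the Pfaffian $\Pf(b_{\lambda_1})$ is a nonzero monomial in $\lambda_1$. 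That handles every absolutely simple $\gg$ except the ``small-root-system'' anomalies where the pairing hypothesis of Lemma~\ref{pairing} needs extra care, namely the ones where $\Delta(\gg,\ga)$ is not reduced or where $\beta_1$ is short relative to some $\gg_\alpha$: these are exactly the cases $SL(n;\H)$ (restricted system $A_{n-1}$ with multiplicities forcing the separate argument of Lemma~\ref{quaternion-linear}) and $E_{6,F_4}$ (restricted system $A_2$, treated in Lemma~\ref{cayley-collineations}); in all remaining absolutely simple cases $\gg$ is its own split real form, so Lemma~\ref{split-case} applies.

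Assembling these, every real simple $\gg$ falls under one of Lemmas~\ref{split-case}, \ref{complex-case}, \ref{quaternion-linear}, \ref{cayley-collineations}, each of which asserts that $M_1$ has square integrable representations; and the product reduction passes this from the simple summands to a general reductive $\gg$. I expect the main obstacle to be bookkeeping rather than conceptual: one must check that the Borel--de~Siebenthal classification of real simple Lie algebras by their restricted root systems with multiplicities really does leave only the four families above, and in particular confirm that $SL(n;\H)$ and $E_{6,F_4}$ are the only absolutely simple algebras that are not already split. The delicate point inside each invocation is verifying the nondegeneracy hypothesis~(iii) of Lemma~\ref{pairing} for $\gm_1$, i.e.\ that no root in $\Delta^+_1$ is ``orphaned'' by the involution $\sigma_1$ in a way that would leave a nontrivial radical in $b_{\lambda_1}$; this is precisely what Lemma~\ref{layers-nilpotent} is set up to rule out, so the structural work is already in place.
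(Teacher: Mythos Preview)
Your overall architecture --- reduce to simple ideals, separate absolutely simple from ``complex'' simple, then argue directly that $\gm_1$ is Heisenberg --- matches the paper's, but there is a genuine gap in the case analysis for absolutely simple $\gg$.

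The error is the sentence ``in all remaining absolutely simple cases $\gg$ is its own split real form, so Lemma~\ref{split-case} applies.'' That is false: absolutely simple real Lie algebras that are neither split nor isomorphic to $\gs\gl(n;\H)$ or $\ge_{6,F_4}$ are abundant --- e.g.\ $\gs\go(p,q)$ with $2\le p<q$, $\gs\gu(p,q)$, $\gs\gp(p,q)$, $\gf_{4,B_4}$, and several non-split exceptional forms. Your Heisenberg argument via Lemma~\ref{pairing} presupposes that $\gz_1=\gg_{\beta_1}$ is one--dimensional, and that is exactly what fails in these cases. The paper organizes the argument around precisely this dichotomy: when $\dim\gg_{\beta_1}=1$, the pairing from Lemma~\ref{layers-nilpotent} gives the Heisenberg conclusion; when $\dim\gg_{\beta_1}>1$, one uses that the simple roots attached to $-\widetilde{\beta}_1$ in the extended Dynkin diagram must restrict to $0$ on $\ga$, which forces $\gg$ into a short explicit list: $\gs\gl(n;\H)$, $\ge_{6,F_4}$, $\gs\go(1,n)$, $\gs\gu(1,n)$, $\gs\gp(p,q)$, $\gf_{4,B_4}$. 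You have the first two (Lemmas~\ref{quaternion-linear} and~\ref{cayley-collineations}) but omit the last four; in those, $M_1=N$ is respectively abelian or a real/complex/quaternionic/octonionic Heisenberg group, and square integrability is immediate.

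A secondary point: even when $\dim\gg_{\beta_1}=1$ but $\gg$ is not split, restricted root spaces $\gg_\alpha$ can be higher--dimensional, so verifying hypothesis~(iii) of Lemma~\ref{pairing} (nondegeneracy of $\gg_\alpha\otimes\gg_{\sigma_1(\alpha)}\to\gg_{\beta_1}$) is not a purely combinatorial consequence of Lemma~\ref{layers-nilpotent}; you should say why no nonzero $x\in\gg_\alpha$ is annihilated by all of $\gg_{\sigma_1(\alpha)}$.
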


\begin{lemma}\label{abs-simple-case}
If $\gg$ is absolutely simple then each $M_r$ has square
integrable representations.
\end{lemma}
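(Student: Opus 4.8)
The plan is to reduce Lemma \ref{abs-simple-case} to the already-assembled classification pieces by a structural decomposition of the problem. Since $\gg$ is absolutely simple, its restricted root system $\Delta(\gg,\ga)$ is irreducible, and the cascade $\{\beta_1,\dots,\beta_m\}$ is well defined. The key point is that the requirement ``each $M_r$ has square integrable representations'' is equivalent, by Lemma \ref{pairing}, to a combinatorial statement about $\Delta^+_r$: namely, that there is a splitting $\Delta^+_r = \Phi_r \sqcup \sigma_r(\Phi_r)$ such that $\alpha + \alpha'$ is a root (necessarily $\beta_r$, by Lemma \ref{layers-nilpotent}) only when $\alpha \in \Phi_r$ and $\alpha' \in \sigma_r(\Phi_r)$ pair off, with the resulting pairing $\gg_\alpha \otimes \gg_{\sigma_r(\alpha)} \to \gg_{\beta_r}$ nondegenerate. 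So the first step is to record this reduction carefully, identifying $\gv_r = \sum_{\Delta^+_r}\gg_\alpha$, $\gz_r = \gg_{\beta_r}$ (one-dimensional since $\beta_r$ is nonmultipliable and in $\Delta_0^+$), and checking that $[\gm_r,\gm_r] \subset \gm_r$ with bracket landing in $\gg_{\beta_r}$ modulo lower terms, using Lemma \ref{layers-nilpotent} to kill all the ``wrong'' brackets within layer $r$.

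The second step is the induction on $r$. Here I would exploit the structure exhibited by $\sigma_r$: the observation (already in the text) that $\sigma_r(\Delta^+_r) = \Delta^+_r$ and $\sigma_r(\beta_s) = -\beta_s$ for $s \neq r$. The element $\sigma_r$ behaves like the ``$-w_0$'' of a sub-root-system — precisely, $\Delta^+_r \cup \{\beta_r\}$ together with its negatives and the $\beta_i$, $i<r$, that it is orthogonal to, forms the positive system of a parabolic-type subsystem on which $s_{\beta_r}$ acts as a longest element. So the restriction of the whole configuration to the ``first layer'' of this smaller system is governed by Lemma \ref{start-induction} applied to the relevant simple factors of the centralizer of $\{\beta_1,\dots,\beta_{r-1}\}$ in $\gg$. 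Concretely: the centralizer $\gg^{(r)} := \gg_0 + \sum_{\alpha \perp \beta_1,\dots,\beta_{r-1}} \gg_\alpha$ is reductive, its restricted root system is again a root system (the orthogonal complement of the $\beta_i$), $\beta_r$ is its maximal root in the induced ordering, and $\Delta^+_r$ is exactly the ``$\Delta^+_1$'' of $\gg^{(r)}$. Thus $\gm_r$ is the ``$\gm_1$'' of $\gg^{(r)}$, and Lemma \ref{start-induction} — combined with the fact that square integrability of $M_1$ was checked for \emph{all} real reductive $G$ there, not just simple ones — gives square integrability of $M_r$. The remaining bookkeeping is to verify that $\gg^{(r)}$ has the right form (reductive with at most the isotypic behavior already handled) and that the ordering is inherited correctly, which is standard Kostant-cascade structure theory.

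Alternatively — and this is probably cleaner to write — I would simply combine the four classification lemmas directly. Every absolutely simple real $\gg$ falls into one of: (a) a split real form, covered by Lemma \ref{split-case}; (b) (this case is excluded here since absolutely simple means not of type ``simple-not-absolutely-simple'', which is Lemma \ref{complex-case}); or else a non-split, non-complex form, whose restricted root system is reduced of classical type or $F_4$ (for the exceptional cases other than the Cayley plane the restricted system is already split-like) — and for the genuinely non-reduced $BC_m$ cases the relevant $M_r$'s are Heisenberg-like by the same layer-pairing argument, using $SL(n;\H)$ and $E_{6,F_4}$ (Lemmas \ref{quaternion-linear}, \ref{cayley-collineations}) as the base cases for the induction on $r$ via the centralizer reduction of the previous paragraph. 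So: enumerate the restricted root systems $BC_m$, $B_m$, $C_m$, $F_4$ case by case, note that within each the layers $\Delta^+_r$ repeat the shape of $\Delta^+_1$ for the rank-smaller subsystem, and invoke Lemma \ref{start-induction} at each level.

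The main obstacle I anticipate is the $BC_m$ case (restricted systems of real rank $m$ that are non-reduced), where $\gm_r$ need not be literally a sum of Heisenberg algebras: one has to verify that the pairing hypothesis (iii) of Lemma \ref{pairing} genuinely holds, i.e.\ that the extra ``short'' roots $\tfrac12\beta_r$ and the roots $\alpha$ with $2\alpha \in \Delta$ contribute exactly a nondegenerate bracket into $\gg_{\beta_r}$ and nothing spurious. This requires knowing the multiplicities of the restricted root spaces (they are $\dim\gg_{\beta_r/2}$, $\dim\gg_\alpha$ etc., determined by the Satake diagram) and checking the Jacobi identity forces nondegeneracy — essentially the content of why $M_r$ is a Métivier-type group rather than failing square integrability. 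Lemma \ref{layers-nilpotent} does most of the work by collapsing all layer-internal brackets onto $\beta_r$, but confirming \emph{nondegeneracy} of that collapsed form is the delicate quantitative step, and it is where one must actually use that $\beta_r$ is nonmultipliable together with the specific restricted-root multiplicity data.
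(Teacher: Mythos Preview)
Your second paragraph's centralizer reduction is exactly the paper's proof: the paper defines $\gg_r$ as the semisimple subalgebra generated by $\gh$ and the restricted root spaces for simple roots orthogonal to $\beta_1,\dots,\beta_r$, observes that $\gm_{r+1}$ is the ``$\gm_1$'' of $\gg_r$, and applies the $M_1$ argument (Lemma \ref{start-induction}, supplemented by Lemmas \ref{split-case} and \ref{complex-case} for any simple summands of $\gg_r$ that are not absolutely simple) to conclude.

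One correction worth making: your first paragraph asserts that $\gz_r = \gg_{\beta_r}$ is one-dimensional ``since $\beta_r$ is nonmultipliable.'' That is false in general --- $\gg_{\beta_r}$ is a \emph{restricted} root space and can have multiplicity $>1$ (e.g.\ for $\gs\gl(n;\H)$ or $\ge_{6,F_4}$). This is exactly the difficulty you flag in your final paragraph, and it is why the Lemma~\ref{pairing} route you sketch first does not work uniformly. The centralizer reduction sidesteps the issue entirely: Lemma \ref{start-induction} already handles all the higher-multiplicity base cases directly, so once you have reduced $M_r$ to the $M_1$ of $\gg^{(r)}$ there is nothing further to check about nondegeneracy. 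Your worry about $BC_m$ is therefore legitimate for the first approach but moot for the second, which is the one the paper uses.
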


\begin{proof}({\rm Lemma \ref{pairing}}.)  The assertion is obvious.
\end{proof}

\begin{proof}({\rm Lemma \ref{split-case}}.)
This is the case where $\ga$ is a Cartan subalgebra of $\gg$ and
$\Delta(\gg_\C,\gh_\C) = \Delta(\gg_\C,\ga_\C)$ consists of the
$\C$--linear extensions of the roots in $\Delta(\gg,\ga)$.  All
roots are indivisible, so Lemma \ref{layers-nilpotent} divides $\Delta^+_r$
into two disjoint subsets, thus divides $\sum_{\alpha \in \Delta^+_r}\gg_\alpha$
as a direct sum $\gu \oplus \gu'$ of two subspaces, such that 
those subspaces satisfy the
conditions of Lemma \ref{pairing}.  As $\gm_r$ has $1$--dimensional center
$\gg_{\beta_r}$ it follows that $\gm_r$ is a Heisenberg algebra.  Now
$M_r$ is a Heisenberg group and thus has square integrable representations.
\end{proof}

\begin{proof}({\rm Lemma \ref{complex-case}}.)
This is the case where $\gg$ is the underlying real structure of a
complex simple Lie algebra $\gs$.  The Cartan subalgebra $\gh = \gt + \ga$
of $\gs$ is given by $\gt = \sqrt{-1}\ga$, and $\ga$ is the (real)
subspace on which the roots take real values.  As a real Lie algebra, 
$\gn \cong \sum_{\alpha \in \Delta^+(\gs,\ga)} \gs_\alpha$.
\medskip

Let $\gg'$ denote the split real form of $\gs$. In the Iwasawa decomposition
$G' = K'A'N'$ now $\ga' = \ga$, $\gn'$ is a real form of $\gn$, and for
each $r$ the algebra 
$\gm'_r := \gm_r \cap \gn'$ is a real form of $\gm_r$.  From the
latter, \cite[Theorem 2.1]{W1979} says that the corresponding group $M'_r$ has
square integrable representations if and only if its complexification $M_r$
has square integrable representations.  However, Lemma \ref{split-case}
says that $M'_r$ has square integrable representations.  Our assertion follows.
\end{proof}

\begin{proof}({\rm Lemma \ref{quaternion-linear}}.)
This is the case where 
$\gg = \gs\gl(n;\H)$.  In the usual root ordering, $\widetilde{\beta}_1
= \psi_1 + \dots + \psi_{2n-1}$ and $\psi_i|_\ga = 0$ just when $i$ is odd.
Thus the ordinary roots that restrict to $\beta_1$ are
$\psi_1 + \dots + \psi_{2n-1}$, $\psi_2 + \dots + \psi_{2n-1}$,
$\psi_1 + \dots + \psi_{2n-2}$ and $\psi_2 + \dots + \psi_{2n-2}$;
their root spaces span the center $\gz_1$ of $\gm_1$.
Further $\Delta^+_1$ consists of the restrictions of pairs of roots that
\begin{itemize}

\item sum to $\psi_1 + \dots + \psi_{2n-1}$:
$\{\psi_1 + \dots + \psi_j\, , \psi_{j+1} + \dots + \psi_{2n-1}\}$,
$1 \leqq j < 2n-1$,

\item sum to $\psi_2 + \dots + \psi_{2n-1}$:
$\{\psi_2 + \dots + \psi_j\, , \psi_{j+1} + \dots + \psi_{2n-1}\}$,
$2 \leqq j < 2n-1$,

\item sum to $\psi_1 + \dots + \psi_{2n-2}$:
$\{\psi_1 + \dots + \psi_j\, , \psi_{j+1} + \dots + \psi_{2n-2}\}$,
$1 \leqq j < 2n-2$,

\item sum to $\psi_2 + \dots + \psi_{2n-2}$:
$\{\psi_2 + \dots + \psi_j\, , \psi_{j+1} + \dots + \psi_{2n-2}\}$.
$2 \leqq j < 2n-2$.
\end{itemize}
Their root spaces span a complement $\gv_1$ to $\gz_1$ in $\gm_1$.
Eliminating duplicates, the set of ordinary roots that restrict to elements of
$\Delta^+_1$ is $\{\psi_1 + \dots + \psi_j\,;
\psi_{j+1} + \dots + \psi_{2n-1}\,; \psi_2 + \dots + \psi_j\,;
\psi_{j+1} + \dots + \psi_{2n-2}\}$.  Now let $\lambda \in \gz_1^*$
be zero on the root spaces for $\psi_2 + \dots + \psi_{2n-1}$ and
$\psi_1 + \dots + \psi_{2n-2}$, nonzero on the root spaces for
$\psi_1 + \dots + \psi_{2n-1}$ and $\psi_2 + \dots + \psi_{2n-2}$.
Then the corresponding antisymmetric bilinear form $b_\lambda$ on
$\gv_1$ is nonsingular.  Thus $M_1$ has square integrable (modulo its
center) representations.
\end{proof}

\begin{proof}({\rm Lemma \ref{cayley-collineations}}.)
This is the case where
$\gg = \ge_{6,F_4}$.  Then $\rank_\R\gg = 2$.  In the Bourbaki 
order for the simple roots \setlength{\unitlength}{.35 mm}
\begin{picture}(75,23)
\put(10,12){\circle{2}}
\put(8,15){$\psi_1$}
\put(11,12){\line(1,0){13}}
\put(25,12){\circle{2}}
\put(23,15){$\psi_3$}
\put(26,12){\line(1,0){13}}
\put(40,12){\circle{2}}
\put(38,15){$\psi_4$}
\put(41,12){\line(1,0){13}}
\put(55,12){\circle{2}}
\put(53,15){$\psi_5$}
\put(56,12){\line(1,0){13}}
\put(70,12){\circle{2}}
\put(68,15){$\psi_6$}
\put(40,11){\line(0,-1){13}}
\put(40,-3){\circle{2}}
\put(43,-3){$\psi_2$}
\end{picture},
$\widetilde{\beta}_1 
= \psi_1 + 2\psi_2 + 2\psi_3 + 3\psi_4 + 2\psi_5 + \psi_6$, and the roots
that restrict to $0$ on $\ga$ are $\psi_2, \psi_3, \psi_4$ and $\psi_5$.
So $\Delta^+_1$ consists of the restrictions of pairs of roots that
\begin{itemize}
\item sum to $\psi_1 + 2\psi_2 + 2\psi_3 + 3\psi_4 + 2\psi_5 + \psi_6$:
$$
\begin{aligned}
\{ &\{\psi_2\,,\, \psi_1 + \psi_2 + 2\psi_3 + 3\psi_4 + 2\psi_5 + \psi_6\},\\
         &\{\psi_2 + \psi_4\,,\, \psi_1 + \psi_2 + 2\psi_3 + 2\psi_4 + 2\psi_5
                + \psi_6\},\\
         &\{\psi_2 + \psi_3 + \psi_4\,,\, \psi_1 + \psi_2 + \psi_3 + 2\psi_4 +
                2\psi_5 + \psi_6\},\\
         &\{\psi_2 + \psi_4 + \psi_5\,,\, \psi_1 + \psi_2 + 2\psi_3 + 2\psi_4 +
                \psi_5 + \psi_6\},\\
         &\{\psi_1 + \psi_2 + \psi_3 + \psi_4\,,\, \psi_2 + \psi_3 + 2\psi_4 +
                2\psi_5 + \psi_6\},\\
         &\{\psi_2 + \psi_3 + \psi_4 + \psi_5\,,\, \psi_1 + \psi_2 + \psi_3 +
                2\psi_4 + \psi_5 + \psi_6\},\\
         &\{\psi_2 + \psi_4 + \psi_5 + \psi_6\,,\, \psi_1 + \psi_2 + 2\psi_3 +
                2\psi_4 + \psi_5\},\\
         &\{\psi_1 + \psi_2 + \psi_3 + \psi_4 + \psi_5\,,\, \psi_2 + \psi_3 +
                2\psi_4 + \psi_5 + \psi_6\},\\
         &\{\psi_2 + \psi_3 + 2\psi_4 + \psi_5\,,\, \psi_1 + \psi_2 + \psi_3 +
                \psi_4 + \psi_5 + \psi_6\},\\
         &\{\psi_2 + \psi_3 + \psi_4 + \psi_5 + \psi_6\,,\,  \psi_1 + \psi_2 +
                \psi_3 + 2\psi_4 + \psi_5\} \}
\end{aligned}
$$
\item sum to $\psi_1 +  \psi_2 + 2\psi_3 + 3\psi_4 + 2\psi_5 + \psi_6$:
$$
\begin{aligned}
\{ &\{\psi_4\,,\, \psi_1 + \psi_2 + 2\psi_3 + 2\psi_4 + 2\psi_5
                + \psi_6\},\\ 
         &\{\psi_3 + \psi_4\,,\, \psi_1 + \psi_2 + \psi_3 + 2\psi_4 +
                2\psi_5 + \psi_6\},\\ 
         &\{\psi_4 + \psi_5\,,\, \psi_1 + \psi_2 + 2\psi_3 + 2\psi_4 +
                \psi_5 + \psi_6\},\\ 
         &\{\psi_1 + \psi_3 + \psi_4\,,\, \psi_2 + \psi_3 + 2\psi_4 +
                2\psi_5 + \psi_6\},\\
         &\{\psi_3 + \psi_4 + \psi_5\,,\, \psi_1 + \psi_2 + \psi_3 +
                2\psi_4 + \psi_5 + \psi_6\},\\ 
         &\{\psi_4 + \psi_5 + \psi_6\,,\, \psi_1 + \psi_2 + 2\psi_3 +
                2\psi_4 + \psi_5\},\\
         &\{\psi_1 + \psi_3 + \psi_4 + \psi_5\,,\, \psi_2 + \psi_3 +
                2\psi_4 + \psi_5 + \psi_6\},\\
         &\{\psi_2 + \psi_3 + 2\psi_4 + \psi_5\,,\, \psi_1 + \psi_3 +
                \psi_4 + \psi_5 + \psi_6\},\\ 
         &\{\psi_3 + \psi_4 + \psi_5 + \psi_6\,,\,  \psi_1 + \psi_2 +
                \psi_3 + 2\psi_4 + \psi_5\} \}
\end{aligned}
$$
\item sum to $\psi_1 +  \psi_2 + 2\psi_3 + 2\psi_4 + 2\psi_5 + \psi_6$:
$$
\begin{aligned}
\{ &\{\psi_3\,,\, \psi_1 + \psi_2 + \psi_3 + 2\psi_4 +
                2\psi_5 + \psi_6\},\\
         &\{\psi_5\,,\, \psi_1 + \psi_2 + 2\psi_3 + 2\psi_4 +
                \psi_5 + \psi_6\},\\
         &\{\psi_1 + \psi_3\,,\, \psi_2 + \psi_3 + 2\psi_4 +
                2\psi_5 + \psi_6\},\\
         &\{\psi_3 + \psi_4 + \psi_5\,,\, \psi_1 + \psi_2 + \psi_3 +
                \psi_4 + \psi_5 + \psi_6\},\\
         &\{\psi_5 + \psi_6\,,\, \psi_1 + \psi_2 + 2\psi_3 +
                2\psi_4 + \psi_5\},\\
         &\{\psi_1 + \psi_3 + \psi_4 + \psi_5\,,\, \psi_2 + \psi_3 +
                \psi_4 + \psi_5 + \psi_6\},\\
         &\{\psi_2 + \psi_3 + \psi_4 + \psi_5\,,\, \psi_1 + \psi_3 +
                \psi_4 + \psi_5 + \psi_6\},\\
         &\{\psi_3 + \psi_4 + \psi_5 + \psi_6\,,\,  \psi_1 + \psi_2 +
                \psi_3 + \psi_4 + \psi_5\} \}
\end{aligned}
$$
\item sum to $\psi_1 +  \psi_2 +  \psi_3 + 2\psi_4 + 2\psi_5 + \psi_6$:
$$
\begin{aligned}
\{ &\{\psi_5\,,\, \psi_1 + \psi_2 + \psi_3 + 2\psi_4 +
                \psi_5 + \psi_6\},\\
         &\{\psi_1\,,\, \psi_2 + \psi_3 + 2\psi_4 +
                2\psi_5 + \psi_6\},\\
         &\{\psi_4 + \psi_5\,,\, \psi_1 + \psi_2 + \psi_3 +
                \psi_4 + \psi_5 + \psi_6\},\\
         &\{\psi_5 + \psi_6\,,\, \psi_1 + \psi_2 + \psi_3 +
                2\psi_4 + \psi_5\},\\
         &\{\psi_1 + \psi_3 + \psi_4 + \psi_5\,,\, \psi_2 +
                \psi_4 + \psi_5 + \psi_6\},\\
         &\{\psi_2 + \psi_4 + \psi_5\,,\, \psi_1 + \psi_3 +
                \psi_4 + \psi_5 + \psi_6\},\\
         &\{\psi_4 + \psi_5 + \psi_6\,,\,  \psi_1 + \psi_2 +
                \psi_3 + \psi_4 + \psi_5\} \}
\end{aligned}
$$
\item sum to $\psi_1 +  \psi_2 + 2\psi_3 + 2\psi_4 +  \psi_5 + \psi_6$:
$$
\begin{aligned}
\{ &\{\psi_3\,,\, \psi_1 + \psi_2 + \psi_3 + 2\psi_4 +
                \psi_5 + \psi_6\},\\
         &\{\psi_1 + \psi_3\,,\, \psi_2 + \psi_3 + 2\psi_4 +
                \psi_5 + \psi_6\},\\
         &\{\psi_3 + \psi_4\,,\, \psi_1 + \psi_2 + \psi_3 +
                \psi_4 + \psi_5 + \psi_6\},\\
         &\{\psi_6\,,\, \psi_1 + \psi_2 + 2\psi_3 +
                2\psi_4 + \psi_5\},\\
         &\{\psi_1 + \psi_3 + \psi_4\,,\, \psi_2 + \psi_3 +
                \psi_4 + \psi_5 + \psi_6\},\\
         &\{\psi_2 + \psi_3 + \psi_4\,,\, \psi_1 + \psi_3 +
                \psi_4 + \psi_5 + \psi_6\},\\
         &\{\psi_3 + \psi_4 + \psi_5 + \psi_6\,,\,  \psi_1 + \psi_2 +
                \psi_3 + \psi_4\} \}
\end{aligned}
$$

\item sum to $\psi_1 +  \psi_2 +  \psi_3 + 2\psi_4 +  \psi_5 + \psi_6$:
$$
\begin{aligned}
\{ &\{\psi_1\,,\, \psi_2 + \psi_3 + 2\psi_4 +
                \psi_5 + \psi_6\},\\
         &\{\psi_4\,,\, \psi_1 + \psi_2 + \psi_3 +
                \psi_4 + \psi_5 + \psi_6\},\\
         &\{\psi_6\,,\, \psi_1 + \psi_2 + \psi_3 +
                2\psi_4 + \psi_5\},\\
         &\{\psi_1 + \psi_3 + \psi_4\,,\, \psi_2 +
                \psi_4 + \psi_5 + \psi_6\},\\
         &\{\psi_2 + \psi_4\,,\, \psi_1 + \psi_3 +
                \psi_4 + \psi_5 + \psi_6\},\\
         &\{\psi_4 + \psi_5 + \psi_6\,,\,  \psi_1 + \psi_2 +
                \psi_3 + \psi_4\} \}
\end{aligned}
$$
\item sum to $\psi_1 +  \psi_2 +  \psi_3 + \psi_4 +  \psi_5 + \psi_6$:
$$
\begin{aligned}
\{ &\{\psi_1\,,\, \psi_2 + \psi_3 + \psi_4 +
                \psi_5 + \psi_6\},\\
         &\{\psi_6\,,\, \psi_1 + \psi_2 + \psi_3 +
                \psi_4 + \psi_5\},\\
         &\{\psi_1 + \psi_3\,,\, \psi_2 +
                \psi_4 + \psi_5 + \psi_6\},\\
         &\{\psi_2\,,\, \psi_1 + \psi_3 +
                \psi_4 + \psi_5 + \psi_6\},\\
         &\{\psi_5 + \psi_6\,,\,  \psi_1 + \psi_2 +
                \psi_3 + \psi_4\} \}
\end{aligned}
$$
\item sum to $\psi_1 + \psi_3 + \psi_4 +  \psi_5 + \psi_6$:
$$
\begin{aligned}
\{ &\{\psi_1\,,\, \psi_3 + \psi_4 + \psi_5 + \psi_6\},\\
         &\{\psi_1 + \psi_3\,,\, \psi_4 + \psi_5 + \psi_6\},\\
	 &\{\psi_1 + \psi_3 + \psi_4\,,\, \psi_5 + \psi_6\},\\
	 &\{\psi_1 + \psi_3 + \psi_4 + \psi_5\,,\, \psi_6\}\}
\end{aligned}
$$
\end{itemize}
Eliminating duplicates, the set of ordinary roots that restrict to
elements of $\Delta^+_1$ consists of
\begin{itemize}
\item  The  $20$ positive roots listed above in the first 
group, summing to $\psi_1 + 2\psi_2 + 2\psi_3 + 3\psi_4 + 2\psi_5 + \psi_6$.
These are the roots $\sum a_i\psi_i$ for which $a_2 = 1$ and 
$(a_1,a_6)$ is either $(1,0)$ or $(0,1)$.  We denote the sum of their
root spaces by $\gv_1'$.

\item  The $8$ positive roots listed above in the last group, summing to 
$\psi_1 + \psi_3 + \psi_4 +  \psi_5 + \psi_6$.  These are the 
roots $\sum a_i\psi_i$ for which $a_2 = 0$ and
$(a_1,a_6)$ is either $(1,0)$ or $(0,1)$.  We denote the sum of their
root spaces by $\gv_1''$.
\end{itemize}
Now the space $\gv_1 := \gv_1' + \gv_1''$ is a vector space complement
to $\gz_1$ in $\gm_1$.  Let $\lambda \in \gz_1^*$ be zero on 
the root spaces for 
$\psi_1 +  \psi_2 + 2\psi_3 + 3\psi_4 + 2\psi_5 + \psi_6$,
$\psi_1 +  \psi_2 + 2\psi_3 + 2\psi_4 + 2\psi_5 + \psi_6$,
$\psi_1 +  \psi_2 +  \psi_3 + 2\psi_4 + 2\psi_5 + \psi_6$,
$\psi_1 +  \psi_2 + 2\psi_3 + 2\psi_4 +  \psi_5 + \psi_6$,
$\psi_1 +  \psi_2 +  \psi_3 + 2\psi_4 +  \psi_5 + \psi_6$ and
$\psi_1 +  \psi_2 +  \psi_3 + \psi_4 +  \psi_5 + \psi_6$,
and nonzero on the root spaces for 
$\psi_1 + 2\psi_2 + 2\psi_3 + 3\psi_4 + 2\psi_5 + \psi_6$
and $\psi_1 + \psi_3 + \psi_4 +  \psi_5 + \psi_6$.  Then the corresponding
antisymmetric bilinear form $b_\lambda$ on $\gv_1$ is nonsingular, so
$M_1$ has square integrable (modulo its center) representations.
\end{proof}

\begin{proof}({\rm Lemma \ref{start-induction}}.)
It suffices to consider the case where $\gg_\C$ is a simple complex 
Lie algebra, but $\gg$
need not be its split real form.  We do, however, assume that it is not
the compact real form, for in that case $N = \{1\}$.
\medskip

Suppose first that $\dim \gg_{\beta_1} = 1$.  In other words the highest
root in $\Delta^+(\gg_\C,\gh_\C)$, call it $\widetilde{\beta}_1$, is the
only root of $\Delta(\gg_\C,\gh_\C)$ that restricts to $\beta_1$.  
Applying Lemma \ref{layers-nilpotent} as in the proof of Lemma
\ref{split-case} it follows that $M_1$ has square integrable representations.
\medskip

Now suppose that $\dim \gg_{\beta_1} > 1$.  Note that the roots in
$\Delta(\gg_\C,\gh_\C)$ that restrict to $\beta_1$ are just the roots of 
the form $\widetilde{\beta}_1 - \sum t_i\gamma_i$ where every one of the
$\gamma_i|_\ga = 0$.  In particular the root(s) of the extended Dynkin diagram
of $\Delta^+(\gg_\C,\gh_\C)$, to which $-\widetilde{\beta}_1$ attaches, have
restriction $0$ on $\ga$.  We have already dealt with the cases 
$\gg = \gs\gl(n;\H)$ and $\gg = \ge_{6,F_4}$, so there remain only a few 
easy cases:
\smallskip 

Case $\gg = \gs\go(1,n)$.  Then $\rank_\R\gg = 1$,
$\Delta^+(\gg,\ga) = \{\widetilde{\beta}_1\}$ and $M_1 = N$ is abelian.
In particular $M_1$ has square integrable (modulo its center) representations.
\smallskip

Case $\gg = \gs\gu(1,n)$.  Then $\rank_\R\gg = 1$,
$\Delta^+(\gg,\ga) = \{\widetilde{\beta}_1, \tfrac{1}{2}\widetilde{\beta}_1\}$,
and $M_1 = N$ is a Heisenberg group $\Im\C + \C^{n-1}$.  
In particular $M_1$ has square integrable representations.
\smallskip

Case $\gg = \gs\gp(p,q)$, $p \leqq q$.  Then $\rank_\R\gg = p$,
$\Delta^+(\gg,\ga) = \{\widetilde{\beta}_1, \tfrac{1}{2}\widetilde{\beta}_1\}$,
and $M_1 = N$ is a quaternionic Heisenberg group $\Im \H + \H^s$.
In particular $M_1$ has square integrable (modulo its center) representations.

Case $\gg = \gf_{4,B_4}$.  Then $\rank_\R\gg = 1$,
$\Delta^+(\gg,\ga) = \{\widetilde{\beta}_1, \tfrac{1}{2}\widetilde{\beta}_1\}$,
and $M_1 = N$ is an octonionic Heisenberg group $\Im\O + \O$.
In particular $M_1$ has square integrable (modulo its center) representations.
\end{proof}

\begin{proof}({\rm Lemma \ref{abs-simple-case}}.)
This is the case where $\gg_\C$ is a simple complex Lie algebra, but $\gg$
need not be its split real form.  We do, however, assume that it is not
the compact real form, for in that case $N = \{1\}$. Then $\beta_1(\gt) = 0$. 
Note that $\beta_1$ is the restriction to $\ga$ of the highest root 
$\widetilde{\beta}_1$ in $\Delta^+(\gg_\C,\gh_\C)$ and that 
$\widetilde{\beta}_1$ is a long root.  Thus $\widetilde{\beta}_1$ is the
only root in $\Delta^+(\gg_\C,\gh_\C)$ that restricts of $\beta_1$.
Applying Lemma \ref{layers-nilpotent} as in the proof of Lemma
\ref{split-case} it follows that $M_1$ has square integrable representations.
This starts the induction. 
\medskip

Suppose we know that $M_1$, \dots , $M_r$ have square integrable 
representations and that $r < m$.  Let $\gg_r$ be the semisimple
subalgebra of $\gg$ generated by $\gh$ and all (restricted) root spaces for
simple roots that are orthogonal to $\beta_1$, $\beta_2$, \dots , $\beta_r$.
Then $\beta_{r+1}$ is a maximum among the positive restricted roots of
$\gg_r$ and $\gm_{r+1}$ is the subalgebra of $\gg_r$ that is the
counterpart of $\gm_1$ for $\gg$.  Thus by the argument just above for $M_1$,
and by Lemmas \ref{split-case} and \ref{complex-case} as needed for the
simple summands of $\gg_r$, we conclude that $M_{r+1}$ has square
integrable representations.
\end{proof}

We now apply Lemmas \ref{complex-case} and \ref{abs-simple-case} to the
list (\ref{setup}) of conditions for setting up the character formula
and Plancherel formula as in Theorem \ref{plancherel-general}.  Those
Lemmas supply the key condition, that each $M_r$ has unitary representations 
with coefficients in $L^2(M_r/Z_r)$.  Lemma \ref{layers2} ensures that
each $N_r := M_1M_2\dots M_r$ is a normal subgroup of $N$ with
$N_r = N_{r-1}\rtimes M_r$ semidirect product, and then Lemma \ref{layers}
says that $N = M_1M_2\dots M_{m-1}M_m$ as needed.  The decompositions
$\gm_r = \gz_r + \gv_r$ and $\gn = \gs + \gv$ now are immediate from the
construction of the $\gm_r$.  It remains only to verify that 
$[\gm_r,\gz_s] = 0$ and $[\gm_r,\gm_s] \subset \gv$ for $r > s$.  
\medskip

Let $\alpha \in \Delta^+_r$ and $s < r$.  Then $\alpha \notin (\Delta^+_1
\cup \dots \cup \Delta^+_s)$ and $\alpha \perp \beta_i$ for $i \leqq s$.
Now $\beta_s + \alpha \in \Delta^+$ would imply $\beta_s - \alpha \in \Delta^+$,
contradicting $\alpha \notin \Delta^+_s$.  It follows that $[\gm_r,\gz_s] = 0$.
\medskip

Let $\alpha \in \Delta^+_r$ and $\alpha' \in \Delta^+_s$\,, $s < r$, with
$\alpha + \alpha' = \beta_t$.  Lemma \ref{layers2} says $s = t$ so
$\beta_s - \alpha' = \alpha$.  But then $\beta_s - \alpha' \in \Delta^+_s$
contradicting $\alpha \notin (\Delta^+_1 \cup \dots \cup \Delta^+_s)$.
We conclude that $[\gm_r,\gm_s] \subset \gv$ for $r > s$.
\medskip

Summarizing, we have just shown that Theorem \ref{plancherel-general}
applies to milradicals of minimal parabolic subgroups.  In other words,

\begin{theorem}\label{iwasawa-layers}
Let $G$ be a real reductive Lie group, $G = KAN$ an Iwasawa
decomposition, $\gm_r$ and $\gn_r$ the subalgebras of $\gn$ defined in 
{\rm (\ref{def-m})} and {\rm (\ref{def-filtration})},
and $M_r$ and $N_r$ the corresponding analytic subgroups of $N$.  
Then the $M_r$ and $N_r$ satisfy {\rm (\ref{setup})}.  In particular,
Plancherel measure for $N$ is
concentrated on $\{\pi_\lambda \mid \lambda \in \gt^*\}$.
If $\lambda \in \gt^*$, and if $u$ and $v$ belong to the
representation space $\cH_{\pi_\lambda}$ of $\pi_\lambda$,  then
the coefficient $f_{u,v}(x) = \langle u, \pi_\lambda(x)v\rangle$
satisfies
\begin{equation}
||f_{u,v}||^2_{L^2(N / S)} = \frac{||u||^2||v||^2}{|P(\lambda)|}\,.
\end{equation}
The distribution character $\Theta_{\pi_\lambda}$ of $\pi_{\lambda}$ satisfies
\begin{equation}
\Theta_{\pi_\lambda}(f) = c^{-1}|P(\lambda)|^{-1}\int_{\cO(\lambda)}
        \widehat{f_1}(\xi)d\nu_\lambda(\xi) \text{ for } f \in \cC(N)
\end{equation}
where $\cC(N)$ is the Schwartz space, $f_1$ is the lift
$f_1(\xi) = f(\exp(\xi))$, $\widehat{f_1}$ is its classical Fourier transform,
$\cO(\lambda)$ is the coadjoint orbit $\Ad^*(N)\lambda = \gv^* + \lambda$,
and $d\nu_\lambda$ is the translate of normalized Lebesgue measure from
$\gv^*$ to $\Ad^*(N)\lambda$.  The Plancherel formula on $N$ is
\begin{equation}
f(x) = c\int_{\gt^*} \Theta_{\pi_\lambda}(r_xf) |P(\lambda)|d\lambda
        \text{ for } f \in \cC(N).
\end{equation}
\end{theorem}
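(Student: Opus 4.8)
The plan is to verify, one after another, the three conditions packaged in (\ref{setup}) for the subgroups $M_r = \exp\gm_r$ and $N_r = \exp\gn_r$ attached to the Kostant cascade $\{\beta_1,\dots,\beta_m\}$, and then to quote Theorem \ref{plancherel-general} to obtain the four displayed conclusions verbatim. The root-combinatorial conditions (\ref{setup})(b) and (\ref{setup})(c) are cheap; the substantive content is condition (\ref{setup})(a), that each $M_r$ has unitary representations with coefficients in $L^2(M_r/Z_r)$.

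For condition (\ref{setup})(a) the plan is to reduce to $\gg$ simple as a real Lie algebra and split into three regimes. If $\gg$ is the split real form, every restricted root is indivisible, so Lemma \ref{layers-nilpotent} partitions $\Delta^+_r$ into pairs $\alpha\leftrightarrow\sigma_r(\alpha)$ with $\alpha+\sigma_r(\alpha)=\beta_r$; feeding this into Lemma \ref{pairing} exhibits $\gm_r$ as a Heisenberg algebra (Lemma \ref{split-case}), and Heisenberg groups are square integrable. If $\gg$ is simple but not absolutely simple, i.e. the real form underlying a complex simple $\gs$, the plan is to pass to the split real form $\gg'$ of $\gs$, observe that $\gm_r' := \gm_r\cap\gn'$ is a real form of $\gm_r$, and apply \cite[Theorem 2.1]{W1979} to transfer square integrability between $M_r'$ and its complexification $M_r$ (Lemma \ref{complex-case}). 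If $\gg$ is absolutely simple, then $\widetilde\beta_1$ is long, hence the unique root of $\Delta(\gg_\C,\gh_\C)$ restricting to $\beta_1$, and Lemma \ref{layers-nilpotent} again makes $\gm_1$ Heisenberg; for the inductive step I would restrict to the semisimple subalgebra $\gg_r$ generated by $\gh$ together with the simple root spaces orthogonal to $\beta_1,\dots,\beta_r$, inside which $\beta_{r+1}$ is a highest root and $\gm_{r+1}$ plays the role of $\gm_1$, so the $M_1$ argument combined with Lemmas \ref{split-case} and \ref{complex-case} for the simple summands of $\gg_r$ gives $M_{r+1}$ square integrable (Lemma \ref{abs-simple-case}). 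The awkward cases, where $\dim\gg_{\beta_1}>1$ so $\gm_1$ is not Heisenberg, are disposed of individually in Lemmas \ref{quaternion-linear} ($\gg=\gs\gl(n;\H)$) and \ref{cayley-collineations} ($\gg=\ge_{6,F_4}$), and in the rank-one algebras $\gs\go(1,n)$, $\gs\gu(1,n)$, $\gs\gp(p,q)$, $\gf_{4,B_4}$ where $M_1=N$ is abelian or an octonion/quaternion/complex Heisenberg group (Lemma \ref{start-induction}); in each of those one simply writes down an explicit $\lambda\in\gz_1^*$ for which the form $b_\lambda$ is nonsingular on $\gv_1$.

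Conditions (\ref{setup})(b) and (\ref{setup})(c) are then pure root bookkeeping. Lemma \ref{layers2} gives $[\gm_r,\gm_s]\subset\gm_{\min(r,s)}$, so each $\gn_r$ is an ideal and $N_r = N_{r-1}\rtimes M_r$, and Lemma \ref{fill-out} gives $\gn = \gm_1+\dots+\gm_m$ as a vector space so that $N = M_1M_2\dots M_m$. The splittings $\gm_r=\gz_r+\gv_r$ and $\gn=\gs+\gv$, with $\gz_r=\gg_{\beta_r}$ and $\gv_r=\sum_{\Delta^+_r}\gg_\alpha$, are immediate from (\ref{def-m}). For the bracket relations with $r>s$: if $\alpha\in\Delta^+_r$ then $\alpha\perp\beta_s$ and $\alpha\notin\Delta^+_s$, so $\beta_s+\alpha\in\Delta^+$ would force $\beta_s-\alpha\in\Delta^+$, a contradiction, hence $[\gm_r,\gz_s]=0$; and if $\alpha\in\Delta^+_r$, $\alpha'\in\Delta^+_s$ with $\alpha+\alpha'$ a root, then Lemma \ref{layers2} forces $\alpha+\alpha'=\beta_s$, so $\beta_s-\alpha'=\alpha\in\Delta^+_s$, again a contradiction, hence $[\gm_r,\gm_s]\subset\gv$.

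Once (\ref{setup}) is in hand, Theorem \ref{plancherel-general} applies directly and yields the concentration of Plancherel measure on $\{\pi_\lambda\mid\lambda\in\gt^*\}$, the coefficient norm identity $\|f_{u,v}\|^2_{L^2(N/S)}=\|u\|^2\|v\|^2/|P(\lambda)|$, and the character and Plancherel formulae as stated. The hard part is entirely inside condition (\ref{setup})(a): the reduction to simple $\gg$ and the Heisenberg-via-cascade argument are routine, but the cases $\gs\gl(n;\H)$ and $\ge_{6,F_4}$ (and, to a lesser extent, the rank-one algebras) require the explicit restricted-root tabulations of Lemmas \ref{quaternion-linear} and \ref{cayley-collineations} to locate a functional $\lambda\in\gz_1^*$ with $b_\lambda$ nondegenerate on $\gv_1$, and it is there that essentially all the work lies.
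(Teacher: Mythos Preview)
Your proposal is correct and follows essentially the same route as the paper: verify (\ref{setup})(a) via the split/complex/absolutely-simple trichotomy with the explicit case analysis (Lemmas \ref{quaternion-linear}, \ref{cayley-collineations}, \ref{start-induction}) for the non-Heisenberg $\gm_1$'s, verify (\ref{setup})(b),(c) by the root bookkeeping from Lemmas \ref{fill-out} and \ref{layers2}, and then invoke Theorem \ref{plancherel-general}. One small wording issue: your sentence ``$\widetilde\beta_1$ is long, hence the unique root of $\Delta(\gg_\C,\gh_\C)$ restricting to $\beta_1$'' is not literally true in the cases with $\dim\gg_{\beta_1}>1$ that you go on to list, so phrase it as the generic case rather than a universal implication---but since you immediately treat those exceptions separately, the argument itself is sound.
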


\section{Arithmetic Quotients}\label{quotients}
\setcounter{equation}{0}

In this section we consider the case where  our connected
simply connected nilpotent Lie group $N$ has a discrete co-compact 
subgroup $\Gamma$ that fits into a decomposition of the form
(\ref{setup}).  We show that the compact nilmanifold $N/\Gamma$ has a
corresponding foliation and derive analytic results analogous to those
of Theorem \ref{plancherel-general}.
These results include multiplicity formulae for the regular representation
of $N$ on $L^2(N/\Gamma)$. They apply in particular to the nilradicals
of minimal parabolic subgroups, as studied in Section \ref{iwasawa}.
\smallskip

Here are some basic facts about discrete uniform (i.e. co-compact) subgroups
of connected simply connected nilpotent Lie groups, mostly due to Mal\v cev.
See \cite[Chapter 2]{RAG} for an exposition.

\begin{proposition}\label{alg-1}
Let $N$ be a connected simply connected nilpotent Lie group.  Then 
the following are equivalent.
\begin{itemize}
\item $N$ has a discrete subgroup $\Gamma$ with $N/\Gamma$ compact. 
\item $N \cong N_\R$ where $N_\R$ is the group of real points in a unipotent 
  linear algebraic group defined over the rational number field $\Q$
\item $\gn$ has a basis $\{\xi_j\}$ for which 
the coefficients $c_{i,j}^k$ in 
$[\xi_i,\xi_j] = \sum c_{i,j}^k\xi_k$ are rational numbers.  
\end{itemize}
Under those conditions let $\gn_{_\Q}$ denote the rational span of 
$\{\xi_j\}$ and let $\gn_{_\Z}$ be the integral span.  Then 
$\exp(\gn_{_\Z})$ generates a discrete subgroup $N_\Z$ of $N = N_\R$ 
and $N_\R/N_\Z$ is compact.  Conversely, if $\Gamma$ is a discrete
co-compact subgroup of $N$ then the $\Z$--span of $\exp^{-1}(\Gamma)$
is a lattice in $\gn$ for which any generating set $\{\xi_j\}$ is a 
basis of $\gn$ such that the coefficients $c_{i,j}^k$ in
$[\xi_i,\xi_j] = \sum c_{i,j}^k\xi_k$ are rational numbers.
\end{proposition}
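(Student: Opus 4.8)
The plan is to establish the three-way equivalence by a cycle of implications, then treat the supplementary assertions separately. I would prove the chain (third bullet) $\Rightarrow$ (second bullet) $\Rightarrow$ (first bullet) $\Rightarrow$ (third bullet). For (third bullet) $\Rightarrow$ (second bullet): given a basis $\{\xi_j\}$ with rational structure constants, the rational span $\gn_{_\Q}$ is a Lie algebra over $\Q$, and since $\gn$ is nilpotent the Baker--Campbell--Hausdorff series terminates, so the group multiplication on $N = \exp(\gn)$ is given in exponential coordinates by polynomial maps with rational coefficients. This exhibits $N$ as $N_\R$ for the unipotent $\Q$-group whose coordinate ring is generated by the dual basis to $\{\xi_j\}$; its defining equations are polynomial with rational coefficients because the group law is. For (second bullet) $\Rightarrow$ (first bullet): if $N = N_\R$ for a unipotent $\Q$-group, then $N_\Z$ (the integer points in a chosen $\Q$-basis of the coordinate ring) is a discrete subgroup with $N_\R/N_\Z$ compact; this is a standard fact from the theory of arithmetic subgroups (Borel--Harish-Chandra), and I would simply cite \cite[Chapter 2]{RAG}. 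For (first bullet) $\Rightarrow$ (third bullet): this is Mal\v cev's theorem --- if $\Gamma$ is discrete and uniform, one builds a ``strong Mal\v cev basis'' adapted to a central filtration of $\Gamma$, and the integrality of the exponents in that basis forces the structure constants of $\gn$, in the corresponding basis, to be rational; again I would cite \cite[Chapter 2]{RAG}.

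For the supplementary claims, assume the equivalent conditions hold and fix the basis $\{\xi_j\}$ with rational $c_{i,j}^k$. That $\exp(\gn_{_\Z})$ generates a discrete subgroup follows because, after clearing denominators (replacing $\xi_j$ by $n\xi_j$ for a suitable integer $n$), the BCH product of two integral elements again has integral (hence rational, with bounded denominator) coordinates, so the subgroup they generate is a lattice subgroup in exponential coordinates and therefore discrete; cocompactness of $N_\R/N_\Z$ then holds because $\exp(\gn_{_\Z})$ spans a full lattice in the coordinate vector space $\gn$. The converse --- that $\exp^{-1}(\Gamma)$ has $\Z$-span a lattice in $\gn$ whose generating sets are bases with rational structure constants --- is again Mal\v cev's structure theorem for uniform lattices, and I would cite it.

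The one genuinely substantive point, and the step I expect to be the main obstacle, is Mal\v cev's direction (first bullet) $\Rightarrow$ (third bullet): producing a basis with rational structure constants out of the mere existence of a discrete cocompact $\Gamma$. The argument proceeds by induction on the nilpotency length using the lower central series $\gn = \gn^{(1)} \supset \gn^{(2)} \supset \cdots$; one shows $\Gamma \cap \exp(\gn^{(k)})$ is uniform in $\exp(\gn^{(k)})$, picks a $\Z$-basis of the logarithm lattice in each graded piece, and checks that brackets of these basis elements lie in the next lattice --- this is exactly what forces rationality. Since the paper explicitly cites \cite[Chapter 2]{RAG} as an exposition of ``basic facts about discrete uniform subgroups \dots mostly due to Mal\v cev,'' the cleanest course is to attribute this equivalence and the converse assertion to that reference rather than reproving it, and to supply only the short BCH-polynomiality arguments for the remaining implications and for the explicit $N_\Z$ construction.
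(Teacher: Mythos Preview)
Your outline is a reasonable sketch of how one would actually prove these equivalences, but the paper does not prove this proposition at all: it is stated as background material, introduced with ``Here are some basic facts \dots mostly due to Mal\v cev.\ See \cite[Chapter 2]{RAG} for an exposition,'' and then simply quoted without proof. So your proposal goes well beyond what the paper does; the paper's ``proof'' is just the citation. Your own instinct at the end --- that the cleanest course is to attribute the result to \cite[Chapter 2]{RAG} rather than reprove it --- is exactly what the paper does, and you could have stopped there.
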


Note that the conditions of Proposition \ref{alg-1} hold for the nilpotent 
groups
studied in Section \ref{iwasawa}, where in fact one can choose the basis
$\{\xi_j\}$ of $\gn$ so that the $c_{i,j}^k$ are integers.
\smallskip

Here are the basic facts on square integrable representations in this setting,
from \cite[Theorem 7]{MW1973}:

\begin{proposition}\label{mult-1}
Let $N$ be a connected simply connected nilpotent Lie group that has
square integrable representations, and let $\Gamma$ a discrete co-compact 
subgroup.  Let $Z$ be the center of $N$ and normalize the volume form on 
$\gn / \gz$ by normalizing Haar measure on $N$ so that $N/Z\Gamma$ has 
volume $1$.  Let $P$ be the corresponding
Pfaffian polynomial on $\gz^*$.  Note that $\Gamma \cap Z$ is a lattice
in $Z$ and $\exp^{-1}(\Gamma \cap Z)$ is a lattice (denote it $\Lambda$)
in $\gz$.  That defines the dual lattice $\Lambda^*$ in $\gz^*$.  Then
a square integrable representation $\pi_\lambda$ occurs in $L^2(N/\Gamma)$
if and only if $\lambda \in \Lambda^*$, and in that case $\pi_\lambda$ 
occurs with multiplicity $|P(\lambda)|$.
\end{proposition}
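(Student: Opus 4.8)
The plan is to split $L^2(N/\Gamma)$ by central character, reduce to a single square integrable $\pi_\lambda$, and then read off its multiplicity from a \emph{truncated} trace identity. Beyond the square integrable theory the only inputs are the extended Schur orthogonality already used in the proof of Proposition~\ref{L2.1} and standard facts about lattices in nilpotent groups.

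\emph{Reduction.} Since $\Gamma\cap Z$ is a lattice in $Z$, the compact torus $Z/(\Gamma\cap Z)$ acts on $L^2(N/\Gamma)$ commuting with the $N$-action, so $L^2(N/\Gamma)=\bigoplus_{\chi}L^2(N/\Gamma)_\chi$, the sum over unitary characters $\chi$ of $Z$ trivial on $\Gamma\cap Z$; under $\exp$ these are exactly the $\chi_\lambda$ with $\lambda\in\Lambda^*$. A square integrable $\pi_\lambda$ has central character $\chi_\lambda$, so it can occur in $L^2(N/\Gamma)$ only inside $L^2(N/\Gamma)_{\chi_\lambda}$, and only when $\lambda\in\Lambda^*$; this is the first assertion. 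Fix now $\lambda\in\Lambda^*$ with $P(\lambda)=\Pf(b_\lambda)\neq0$. The square integrable theory says $\pi_\lambda$ is the unique irreducible unitary representation of $N$ with central character $\chi_\lambda$ (its coadjoint orbit fills the whole affine space $\{\mu\in\gn^*\mid\mu|_{\gz}=\lambda\}$); since every irreducible constituent of $L^2(N/\Gamma)_{\chi_\lambda}$ has central character $\chi_\lambda$ and $N/\Gamma$ is compact (so the decomposition is discrete with finite multiplicities), $L^2(N/\Gamma)_{\chi_\lambda}\cong m_\lambda\,\pi_\lambda$ for a finite $m_\lambda\geq0$. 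It remains to prove $m_\lambda=|P(\lambda)|$.

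\emph{Trace identity.} Realize $L^2(N/\Gamma)_{\chi_\lambda}$ as $L^2$-sections of the Hermitian line bundle $\cL_\lambda$ over the compact manifold $N/Z\Gamma$ attached to $\chi_\lambda$. For $f$ smooth, compactly supported modulo $Z$, with $f(zx)=\overline{\chi_\lambda(z)}f(x)$, the operator $\dot\rho_\lambda(f)\colon\phi\mapsto\bigl(x\mapsto\int_{N/Z}f(n)\phi(n^{-1}x)\,d\dot n\bigr)$ has a smooth kernel which, folded over the fibration $N/Z\to N/Z\Gamma$, is $K_f(x,w)=\sum_{\gamma\in\Gamma/(\Gamma\cap Z)}f(x\gamma^{-1}w^{-1})$ (well defined because $\chi_\lambda$ is trivial on $\Gamma\cap Z$); hence $\dot\rho_\lambda(f)$ is trace class with $\tr\dot\rho_\lambda(f)=\int_{N/Z\Gamma}\sum_{\gamma\in\Gamma/(\Gamma\cap Z)}f(x\gamma x^{-1})\,d\dot x$. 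On the other hand $\dot\rho_\lambda(f)=m_\lambda\,\dot\pi_\lambda(f)$, so $\tr\dot\rho_\lambda(f)=m_\lambda\,\tr\dot\pi_\lambda(f)$. I would evaluate the right side by taking $f=g*g^{*}$ with $g$ of the same type, $g^{*}(x)=\overline{g(x^{-1})}$ and $*$ the twisted convolution on $N/Z$; then $\dot\pi_\lambda(f)=\dot\pi_\lambda(g)\dot\pi_\lambda(g)^{*}$, so $\tr\dot\pi_\lambda(f)=\|\dot\pi_\lambda(g)\|_{HS}^{2}=d_\lambda^{-1}\|g\|_{L^2(N/Z)}^{2}=d_\lambda^{-1}f(e)$, using the extended Schur orthogonality (which gives $\|\dot\pi_\lambda(g)\|_{HS}^2=d_\lambda^{-1}\|g\|_{L^2(N/Z)}^2$ for the formal degree $d_\lambda$) and $(g*g^{*})(e)=\|g\|_{L^2(N/Z)}^2$; in the normalization of the statement $d_\lambda=|P(\lambda)|$.

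\emph{Truncation, and the main difficulty.} For the left side, choose $g$ — hence $f$ — supported in a small ball about the identity modulo $Z$, so that every term with $\gamma\notin Z$ in $\sum_{\gamma\in\Gamma/(\Gamma\cap Z)}f(x\gamma x^{-1})$ drops out. This is the crux and needs a \emph{uniform} bound: there is $\varepsilon>0$ with $d_{N/Z}(x\gamma x^{-1}\bmod Z,\mathbf1)\geq\varepsilon$ for all $\gamma\in\Gamma\setminus Z$ and all $x\in N$. I would get this by descending through the lower central series $N/Z=\mathfrak N_1\supset\mathfrak N_2\supset\cdots$ of $N/Z$: conjugation fixes the image in the graded vector group $\mathfrak N_j/\mathfrak N_{j+1}$ of any $\bar\gamma\in\mathfrak N_j\setminus\mathfrak N_{j+1}$, that image lies in the lattice coming from $\Gamma$ (using that a co-compact lattice meets each rational subgroup in a lattice), and a lattice in a vector group is uniformly bounded away from $0$; only finitely many $j$ occur, yielding $\varepsilon$. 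With $f$ supported inside this $\varepsilon$-ball only the $\gamma=\mathbf1$ term survives, so $\tr\dot\rho_\lambda(f)=\int_{N/Z\Gamma}f(e)\,d\dot x=\mathrm{vol}(N/Z\Gamma)\,f(e)=f(e)$ by the chosen normalization. Comparing with the previous paragraph, $f(e)=m_\lambda\,d_\lambda^{-1}f(e)$ with $f(e)=\|g\|^2>0$, whence $m_\lambda=d_\lambda=|P(\lambda)|$. I expect the uniform separation of non-central conjugacy classes from the identity modulo $Z$ to be the real obstacle, together with the routine but delicate verification that the measure normalizations — on $Z$, on $N/Z$, and in the formal degree — conspire to make every universal constant equal $1$.
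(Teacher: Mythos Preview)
The paper does not prove this proposition; it is quoted verbatim as \cite[Theorem~7]{MW1973}. So there is no ``paper's own proof'' to compare against. That said, the method actually used in \cite{MW1973}, and repeated in the paper's proof of the generalization Theorem~\ref{mult-form}, is completely different from yours: it is an induction on $\dim N$, peeling off a rational codimension-one normal subgroup $Q$, invoking Moore's algorithm \cite{M1965} to express multiplicities on $N/\Gamma$ as sums of multiplicities on $Q/(Q\cap\Gamma)$ over $\Gamma$-orbits, and tracking how the Pfaffian changes when one variable is removed.

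Your route is the Selberg/trace-formula one: isolate a central character, use that for $P(\lambda)\neq 0$ the orbit $\Ad^*(N)\lambda$ is the full affine plane $\lambda+\gz^\perp$ so $\pi_\lambda$ is the \emph{only} irreducible with that central character, and then compute $\tr\dot\rho_\lambda(f)$ geometrically versus spectrally. The identity $\|\dot\pi_\lambda(g)\|_{HS}^{2}=d_\lambda^{-1}\|g\|_{L^2(N/Z)}^{2}$ is exactly the $L^2$-Fourier inversion for a single square integrable representation, and your truncation argument is sound: writing $\log(x\gamma x^{-1})=\Ad(x)\log\gamma$ and noting that $\Ad(x)$ acts trivially on each graded piece $\gn^{(j)}/\gn^{(j+1)}$ of the lower central series, the first nonzero graded component of $\log\gamma$ (mod $\gz$) is conjugation-invariant and lies in the lattice induced by $\Gamma$, hence is uniformly bounded away from $0$; with a norm adapted to the grading this bounds $\Ad(x)\log\gamma$ itself away from $0$, uniformly in $x$. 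The normalizations do line up once $\mathrm{vol}(N/Z\Gamma)=1$ fixes Haar measure on $N/Z$ and hence the formal degree $d_\lambda=|P(\lambda)|$.

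What each approach buys: yours is shorter and more conceptual, and makes transparent why the answer is exactly the formal degree times the volume; it would also adapt to other trace-formula contexts. The inductive method of \cite{MW1973} avoids the uniform-separation lemma entirely and, more importantly, is what actually drives the paper's Theorem~\ref{mult-form}, where $S$ is not the center of $N$ and there is no single representation per central character---there the induction through the filtration $N_r$ is essential, and your global trace argument would not obviously apply.
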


\begin{definition}\label{ref-rational} 
{\rm
Let $N = N_\R$ be defined over $\Q$ as in Proposition \ref{alg-1}, so we
have a fixed rational form $N_\Q$.  We say that a connected Lie subgroup
$M \subset N$ is {\em rational} if $M \cap N_\Q$ is a rational form of
$M$, in other words if $\gm \cap \gn_{_\Q}$ contains a basis of $\gm$.
We say that a decomposition {\rm (\ref{setup})} is {\em rational} if the 
subgroups $M_r$ and $N_r$ are rational.
}
\hfill $\diamondsuit$
\end{definition}
The following is immediate from this definition.
\begin{lemma}\label{immediate}
Let $N$ be defined over $\Q$ as in {\rm Proposition \ref{alg-1}}
with rational structure defined by a discrete co-compact subgroup $\Gamma$.
If the decomposition {\rm (\ref{setup})} is rational then each 
$\Gamma \cap Z_r$ in $Z_r$\,, 
each $\Gamma \cap M_r$ in $M_r$\,, 
each $\Gamma \cap S_r$ in $S_r$\,, 
and each $\Gamma \cap N_r$ in $N_r$\,, is a
discrete co-compact subgroup defining the same rational structure as
the one defined by its intersection with $N_\Q$\,.
\end{lemma}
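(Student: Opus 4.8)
The plan is to reduce the whole statement to one standard fact from Mal\v cev's theory of lattices in nilpotent Lie groups --- that a connected subgroup of $N$ which is rational for the $\Q$-structure $\gn_{_\Q}$ determined by $\Gamma$ meets $\Gamma$ in a discrete co-compact subgroup --- and then to check that the rational structure this induces on the corresponding subalgebra is the one coming from $N_\Q$.

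First I would note that, besides the $M_r$ and $N_r$ which are rational because (\ref{setup}) is assumed rational, the subgroups $Z_r$ and $S_r$ are rational as well. Indeed $\gm_r \cap \gn_{_\Q}$ is a $\Q$-form of $\gm_r$ and $\gn_{_\Q}$ is a $\Q$-subalgebra, so the bracket of $\gm_r$ has rational structure constants in a basis chosen inside $\gm_r \cap \gn_{_\Q}$; hence the center $\gz_r = \{x \in \gm_r \mid [x,\gm_r] = 0\}$ is the solution set of a homogeneous linear system with rational coefficients, so $\gz_r \cap \gn_{_\Q}$ is a $\Q$-form of $\gz_r$ and $Z_r$ is rational. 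Because (\ref{setup})(c) forces $[\gz_r,\gz_s] = 0$, we have $\gs_r = \gz_1 \oplus \dots \oplus \gz_r$, a sum of rational subspaces, so $S_r$ is rational too. This propagation of rationality to the centers is the only step that is not completely formal, and even it is routine linear algebra over $\Q$; the one point to watch is that the basis of $\gm_r$ one works with must lie in $\gm_r \cap \gn_{_\Q}$, which is precisely what rationality of $M_r$ guarantees.

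Then, for each $H = \exp\gh$ among $Z_r, M_r, S_r, N_r$, I would invoke Mal\v cev's theory (see \cite[Chapter 2]{RAG}): a connected rational subgroup of $N$ meets $\Gamma$ in a subgroup that is discrete --- automatic from the subspace topology --- and co-compact in $H$, the latter being equivalent to closedness of $\Gamma H$ in $N$ and exactly the rationality criterion. Finally, to identify the induced rational structure, apply Proposition \ref{alg-1} to $H$ with the lattice $\Gamma \cap H$: it produces the $\Q$-form on $\gh$ equal to the $\Q$-span of $\exp^{-1}(\Gamma \cap H) = \exp^{-1}(\Gamma) \cap \gh$. This $\Q$-span is contained in $\gh \cap \gn_{_\Q}$ since $\exp^{-1}(\Gamma) \subset \gn_{_\Q}$; but it and $\gh \cap \gn_{_\Q}$ are both $\Q$-forms of the real vector space $\gh$, so the inclusion is an equality. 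That gives all the assertions of the lemma, so as claimed it follows immediately once Mal\v cev's subgroup criterion is at hand.
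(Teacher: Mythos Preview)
Your proposal is correct. The paper gives no proof at all --- it simply prefaces the lemma with ``The following is immediate from this definition'' --- so your write-up is precisely the unpacking the paper leaves to the reader. You correctly isolate the one step that is not purely formal (propagating rationality from $M_r$ to its center $Z_r$, and thence to $S_r = Z_1\cdots Z_r$), and then invoke the standard Mal\v cev criterion from \cite[Chapter~2]{RAG} together with the dimension count identifying the two $\Q$-forms of $\gh$; this is the intended argument.
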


For the rest of this section  we will assume that $N$ and $\Gamma$ satisfy 
the rationality conditions of Lemma \ref{immediate}, in
particular that {\rm (\ref{setup})} is rational.  Then for each $r$, 
$Z_r \cap \Gamma$ is a lattice in the center $Z_r$ of $M_r$, and
$\Lambda_r := \log(Z_r \cap \Gamma)$ is a lattice in its Lie algebra $\gz_r$.
That defines the dual lattice $\Lambda_r^*$ in $\gz_r^*$.  We normalize the
Pfaffian polynomials on the $\gz_r^*$, and thus the polynomial $P$ on
$\gs^*$, by requiring that the $N_r/(S_r\cdot (N_r\cap\Gamma))$ have volume $1$.

\begin{theorem}\label{mult-form}
Let $\lambda \in \gt^*$, in other words $\lambda = \sum \lambda_r$ where
$\lambda_r \in \gz_r^*$ with $\Pf(b_{\lambda_r}) \ne 0$.  Then a
stepwise square integrable
representation $\pi_\lambda$ of $N$ occurs in $L^2(N/\Gamma)$ if and
only if each $\lambda_r \in \Lambda_r^*$\,, and 
in that case the multiplicity of $\pi_\lambda$ on $L^2(N/\Gamma)$ is
$|P(\lambda)|$.
\end{theorem}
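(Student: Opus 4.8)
The plan is to run the recursive construction of $\pi_\lambda$ from Section \ref{general_theory} in parallel with a decomposition of $L^2(N/\Gamma)$ along the filtration $N_1 \subset N_2 \subset \dots \subset N_m = N$, reducing everything at the bottom step to the classical multiplicity formula of Proposition \ref{mult-1} for a single square integrable group $M_r$. First I would recall that, by Lemma \ref{immediate} and the rationality hypothesis, each $\Gamma_r := \Gamma \cap N_r$ is a discrete co-compact subgroup of $N_r$, each $\Gamma \cap M_r$ is a lattice in $M_r$, and $\Gamma \cap Z_r$ is a lattice in $Z_r$ with $\Lambda_r = \log(\Gamma \cap Z_r)$; the dual lattice $\Lambda_r^*$ is then well defined in $\gz_r^*$. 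The volume normalizations stipulated just before the theorem make the Pfaffian polynomials $\Pf(b_{\lambda_r})$, hence $P$, the correct density for Plancherel and multiplicity counts on each $N_r$.

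The key step is a recursion on $r$: I claim that for $\lambda^{(r)} = \lambda_1 + \dots + \lambda_r$ with each $\lambda_i \in \gz_i^*$ satisfying $\Pf(b_{\lambda_i}) \neq 0$, the stepwise square integrable representation $\pi_{\lambda^{(r)}}$ of $N_r$ occurs in $L^2(N_r/\Gamma_r)$ if and only if each $\lambda_i \in \Lambda_i^*$, with multiplicity $\prod_{i \le r}|\Pf(b_{\lambda_i})|$, where the Pfaffians are taken with the normalization above. For $r = 1$ this is exactly Proposition \ref{mult-1} applied to $M_1 = N_1$. For the inductive step, write $N_r = N_{r-1} \rtimes M_r$ and use the semidirect-product structure together with $\Gamma_r \supset \Gamma_{r-1}$ and $\Gamma \cap M_r$: decompose $L^2(N_r/\Gamma_r)$ by restricting to $N_{r-1}$, so that (by Mackey theory, as in Lemmas \ref{ext-1-2} and \ref{ext-r}) the multiplicity of $\pi_{\lambda^{(r)}} = \pi'_{\lambda^{(r-1)}} \widehat\otimes \pi_{\lambda_r}$ in $L^2(N_r/\Gamma_r)$ is the product of (a) the multiplicity of $\pi_{\lambda^{(r-1)}}$ in $L^2(N_{r-1}/\Gamma_{r-1})$ and (b) the multiplicity with which the extension $\pi'_{\lambda^{(r-1)}}\widehat\otimes\pi_{\lambda_r}$ arises, which is governed by how $\pi_{\lambda_r}$ sits in $L^2(M_r/(\Gamma\cap M_r))$ relative to the action of $\Gamma_r/\Gamma_{r-1}$. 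Factor (a) is handled by the inductive hypothesis; factor (b) reduces, via Lemma \ref{norm}-type commutation (so that $M_r$ acts trivially on $S_{r-1}$ and the $\gz_r$-character is unconstrained by $N_{r-1}$) to the single-group statement Proposition \ref{mult-1} for $M_r$, giving the condition $\lambda_r \in \Lambda_r^*$ and the factor $|\Pf(b_{\lambda_r})|$. Taking $r = m$ yields the theorem, since $\prod_{r=1}^m |\Pf(b_{\lambda_r})| = |P(\lambda)|$.

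I expect the main obstacle to be the bookkeeping in factor (b): one must show cleanly that decomposing $L^2(N_r/\Gamma_r)$ over $N_{r-1}$ isolates the $\pi_{\lambda^{(r-1)}}$-isotypic part as a space of the form $L^2$-sections on which $M_r$ (through $\Gamma_r/\Gamma_{r-1}$ and the lattice $\Gamma \cap M_r$) acts exactly as on $L^2(M_r/(\Gamma\cap M_r))$ tensored with the fixed multiplicity space of $\pi_{\lambda^{(r-1)}}$. This requires using that $N_r$ is normal in $N$ only insofar as needed for the semidirect decomposition, the commutation relations $[\gm_r,\gz_s]=0$ and $[\gm_r,\gm_s]\subset\gv$ for $r>s$ from (\ref{setup})(c) (which guarantee that the central character $\lambda_r$ of $\pi_{\lambda_r}$ is "new" and unconstrained by the lower layers), and the rational compatibility of the lattices from Lemma \ref{immediate}. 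Once that identification is made, Schur orthogonality modulo the center (as used in Proposition \ref{L2.1}) and Proposition \ref{mult-1} close the induction; the volume normalization ensures the Pfaffian factors multiply to $|P(\lambda)|$ with no stray constants.
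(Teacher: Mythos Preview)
Your inductive framework on $r$ is natural, but the crucial step---your factor (b)---is not the clean application of Proposition~\ref{mult-1} to $M_r$ with lattice $\Gamma\cap M_r$ that you describe, and this is exactly where the paper's argument diverges from yours.  The lattice $\Gamma_r$ need not split along $N_r = N_{r-1}\rtimes M_r$: the projection $\Gamma_r/\Gamma_{r-1}\hookrightarrow M_r$ is in general strictly larger than $\Gamma\cap M_r$, and the $\pi_{\lambda^{(r-1)}}$-isotypic part of $L^2(N_r/\Gamma_r)$ is \emph{not} simply $\pi'_{\lambda^{(r-1)}}\otimes L^2(M_r/(\Gamma\cap M_r))$.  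The commutation relations of (\ref{setup})(c) ensure that $M_r$ fixes the equivalence class of $\pi_{\lambda^{(r-1)}}$ (hence the extension $\pi'$ exists), but they say nothing about how $\Gamma_r$ sits inside $N_{r-1}\cdot M_r$, so they cannot by themselves produce the tensor factorization you want.  What you call ``bookkeeping'' is in fact the entire content of the proof.

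The paper does not attempt this jump.  Instead it runs an induction on $\dim N_r$, reducing first to $\dim Z_r = 1$ (as in \cite{MW1973}), then passing to a rational codimension-one normal subgroup $Q_r = N_{r-1}\rtimes Z_{M_r}(x)$ for a carefully chosen rational $x\in\gm_r\setminus\gz_r$.  The inductive hypothesis applies to $Q_r$ (which again satisfies (\ref{setup})), and Moore's orbit algorithm \cite{M1965} then relates multiplicities in $L^2(Q_r/(\Gamma\cap Q_r))$ to those in $L^2(N_r/\Gamma_r)$: one chooses $y$ with $\exp(y)$ generating the infinite cyclic $\Gamma_rQ_r/Q_r$, computes $[x,y]=uz$, and counts that the $\Gamma_r$-orbits on the relevant parameter set have exactly $|au|$ representatives, each contributing the same $Q_r$-multiplicity.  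This orbit count is what manufactures the missing Pfaffian factor at each step and absorbs the non-splitting of the lattice; your proposal has no substitute for it.
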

\begin{proof} Recall $N_r = M_1 M_2 ... M_r = N_{r-1} \rtimes M_r$ semidirect 
product, where $N = M_1M_2...M_m$ and the center $Z_r$ of $M_r$
is central in $N_r$.  Fix $r \leqq m$.  By induction on dimension we assume
that Theorem \ref{mult-form} holds for $N_{r-1}$ and $\Gamma \cap N_{r-1}$.
We may also assume that $\dim Z_r = 1$, following the argument of the first
paragraph of the proof of \cite[Theorem 7]{MW1973}.  
\medskip

Now we proceed as in \cite{MW1973}, adapted to our situation.
Choose nonzero rational $x \in \gm_r \setminus \gz_r$ and $z \in \gz_r$
in such a way that (i) $\exp(z)$ generates the infinite cyclic group 
$\Gamma \cap Z_r$\,, (ii) $[x,\gm_r] \subset \gz_r$\,, and (iii) 
$\exp(x)$ and $\exp(z)$ generate $\Gamma \cap P_r$ where $P_r = \exp(\gp_r)$
where $\gp_r$ is the span of $x$ and $z$.  The centralizer $Z_{M_r}(x)$ of
of $x$ in $M_r$ is a rational normal subgroup of codimension $1$ in $M_r$,, so 
$Q_r := N_{r-1} \rtimes Z_{M_r}(x)$ is a rational normal subgroup of 
codimension $1$ in $N_r$.  The group $\Gamma Q_r / Q_r$ is infinite cyclic.
Parameterize $\gz_r^*$ by $a = a(\nu_r) = \nu_r(z)$.
The Pfaffian polynomial on $\gz_r^*$\,, normalized by the condition that
$M_r/\Gamma Z_r$ has volume $1$, satisfies $P_r(a) = \Pf(b_{\nu_r})
= c_r a^{d_r}$ where $\nu_r(z) = a$, $\dim M_r = 2d_r + 1$, and $c_r$ is 
a nonzero constant.
\medskip

Choose $\gamma \in \Gamma$ whose image in $\Gamma Q_r / Q_r$ is a generator
and let $y = \log(\gamma)$.  Then $[x,y]$ is a rational multiple of $x$,
say $[x,y] = uz$.  Since $\exp(x)$, $\exp(y)$ and $\exp(z)$ span a rational
$3$--dimensional Heisenberg algebra which we denote $\gh_r$\,; $H_r$ 
denotes the corresponding group.  
It follows \cite{A1963} that $u$ is an integer.
\medskip

Let $\pi_\nu \in \widehat{N_r}$ occur in 
$L^2(N_r/(\Gamma \cap N_r))$ where $\nu = \nu_1 + \dots + \nu_r$ with 
$\nu_i \in \gz_i$ and $\Pf(b_{\nu_i}) \ne 0$.  By induction,
$\nu_i \in \Lambda^*$ for $i < r$, and the argument immediately 
above shows that $\nu_r \in \Lambda^*$.  In other words, by induction on
dimension and on $r$, if
$\pi_\lambda$ occurs on $L^2(N/\Gamma)$ then for each index $i$
we have $\lambda_i \in \Lambda_i^*$. 
\medskip

By construction, $Q_r$ satisfies (\ref{setup}) with $Z_{M_r}(x)$ in place 
of $M_r$\,.  If $\nu_r \in \gz^*_r$ defines a square integrable (mod $Z_r$)
representation of $M_r$, then it also defines a square integrable 
(mod $P_r$) representation of $Z_{M_r}(x)$.
Let $\xi_\nu \in \widehat{Q_r}$ correspond to 
$\nu = \nu_1 + \dots + \nu_r$ with each
$\nu_i \in \Lambda^*\cap\gz_i^*$ and $\Pf(b_{\nu_i}) \ne 0$. By
induction on dimension we may assume that $\xi_\nu$ has
multiplicity $|\Pf(b_{\nu_1})\dots \Pf(b_{\nu_{r-1}})\Pf'(b_{\nu_r})|$ on 
$L^2(Q_r/(\Gamma \cap Q_r))$, where $\Pf'(b_{\nu_r})$ is the Pfaffian  
computed on the Lie algebra of $Z_{M_r}(x)$ (modulo its center $\gp_r$).
\medskip

The square integrable representations of $Z_{M_r}(x)$ are parameterized by
the linear functionals $\mu_r$ on $\gp_r = \gz_r + x\R$ with
$\Pf' \ne 0$.  We parameterize $\mu_r$ by $a = \mu_r(z)$ and $b = \mu_r(x)$ so
$\Pf'(b_{\mu_r})$ is a polynomial in $a$ and $b$.  By construction it is 
independent of $x$, so $\Pf'(b_{\mu_r}) = c'_r a^{d_r - 1}$ where
$c_r'$ is a constant and $\dim \gm_r/\gz_r = 2d_r$.
Define $\nu_r$ by  $a = \nu_r(z)$, i.e. by $\nu_r = \mu_r|_{\gz_r}$.
Since $[x,y] = uz$ now $\Pf(a) = \Pf'(a,b)au = uc_r'a^{d_r}$, in particular
$c_r = uc'_r$.
\medskip

By induction on dimension, $\xi_\mu$ occurs in $L^2(Q_r/(\Gamma \cap Q_r))$
if and only if each $\mu_i \in \Lambda^*\cap\gz_i^*$ with
$\Pf(b_{\mu_i}) \ne 0$ for $i < r$, both $a$ and $b$ are integers,
and $a \ne 0$.  To simplify the notation, fix the $\mu_i$ for $i < r$
and write $\xi(a,b)$ for the $\xi_\mu$ where $\mu_r$ has parameter $(a,b)$.
Then $\xi(a,b)$ has multiplicity $mult'(a,b) = 
|\Pf(b_{\mu_1})\dots \Pf(b_{\mu_{r-1}})\Pf'(b_{\mu_r})|
= |\Pf(b_{\mu_1})\dots \Pf(b_{\mu_{r-1}})c'_r a^{d_r - 1}|$.
Thus $c'_r$ is an integer, so $c_r = uc'_r$ is an integer as well.
\medskip

Note that $\pi_\nu = \Ind_{Q_r}^{L_r} (\xi_\mu)$ whenever $\mu|_{\gs_r} = \nu$
and that $\pi_\nu|_{Q_r}$ is the direct integral of all such $\xi_\mu$\,.
Denote $A'(\nu) = \{\mu \mid \mu|_{\gs_r} = \nu \text{ and } \xi_\mu
\text{ occurs in } L^2(Q_r/(\Gamma \cap Q_r)) \}$.
It consists of all $\xi(a,b)$ with fixed $a = \nu_r(z) \ne 0$ and 
integral $b$ if $a$ is an integer, the empty set if $a$ is not integral.
Fix a set $A(\nu)$ of representatives of the orbits of $\Gamma \cap N_r$
on $A'(\nu)$.  As in the proof of \cite[Theorem 7]{MW1973}, the algorithm 
of \cite[page 153]{M1965} says that the multiplicity 
of $\pi_\nu$ in $L^2(L_r/(\Gamma \cap L_r))$ is
$ mult(\nu) = \sum_{\mu \in A(\nu)}\, mult'(\mu).  $
\medskip

An immediate consequence: $mult(\nu) > 0$ if and only if each 
$\nu_i \in \Lambda^*$.  That proves the first assertion of the Theorem.
\medskip

We look at action of $\Gamma \cap N_r$ on $A'(\nu)$.  First,
$\Gamma \cap Q_r$ acts trivially, so the action is given by the cyclic
group $(\Gamma \cap N_r)/(\Gamma \cap Q_r)$, which has generator 
$\overline{\gamma} = \exp(y)(\Gamma \cap Q_r)$.  As $[x,y] = uz$ the action is
$\overline{\gamma}: \xi(a,b) \mapsto \xi(a,b+au)$.  So we can assume that
$A(\nu)$ consists of the $au$ elements $\xi(a,b+i)$ where $i$ is integral 
with $0 \leqq i < au$.  Each $mult'(a,b+i) = 
|\Pf(b_{\mu_1})\dots \Pf(b_{\mu_{r-1}})c'_r a^{d_r - 1}|$, so now $mult(\nu) 
= |\Pf(b_{\nu_1})\dots \Pf(b_{\nu_{r-1}})|\cdot |auc'_r a^{d_r - 1}|
= |\Pf(b_{\nu_1})\dots \Pf(b_{\nu_{r-1}})|\cdot |\Pf(b_{\nu_r})|$. 
This completes the proof of the induction step, and thus of the Theorem.
\end{proof}

\section{Appendix: Some Concrete Calculations}
\label{appendix}
\setcounter{equation}{0}
This Appendix will be left in the arXiv version of the paper but
removed before submitting the paper for publication.
\medskip

Here are concrete examples demonstrating the root structure of
the key technical tool, Lemma \ref{layers-nilpotent}, behind
Theorem \ref{iwasawa-layers}.  These are
the cases where $\Delta(\gg,\ga) = \Delta_0(\gg,\ga)$, i.e. 
where no restricted roots are divisible.  The modification to
allow divisible restricted roots is straightforward.
\medskip

First suppose that $\Delta$ is of type $A$ 
\setlength{\unitlength}{.35 mm}
\begin{picture}(120,18)
\put(5,2){\circle{2}}
\put(2,5){$\psi_1$}
\put(6,2){\line(1,0){23}}
\put(30,2){\circle{2}}
\put(27,5){$\psi_2$}
\put(31,2){\line(1,0){23}}
\put(55,2){\circle{2}}
\put(52,5){$\psi_{\ell -2}$}
\put(56,2){\line(1,0){13}}
\put(74,2){\circle*{1}}
\put(77,2){\circle*{1}}
\put(80,2){\circle*{1}}
\put(86,2){\line(1,0){13}}
\put(100,2){\circle{2}}
\put(97,5){$\psi_{\ell - 1}$}
\end{picture}
Then $r \leqq [\ell /2]$ and $\beta_r = \psi_r + \psi_{r+1} + \dots +
\psi_{\ell -r}$.  If $\alpha, \alpha' \in \Delta^+$ with $\alpha + \alpha'
= \beta_r$ then one of $\alpha, \alpha'$ must have form
$\psi_r + \dots + \psi_s$ and then the other must be $\psi_{s+1} + \dots
+ \psi_{\ell - r}$.  The assertions of Lemma \ref{layers-nilpotent}
follow by inspection.
\medskip

Now suppose that $\Delta$ is of type $B$ 
\setlength{\unitlength}{.35 mm}
\begin{picture}(120,12)
\put(5,2){\circle{2}}
\put(2,5){$\psi_1$}
\put(6,2){\line(1,0){23}}
\put(30,2){\circle{2}}
\put(27,5){$\psi_2$}
\put(31,2){\line(1,0){13}}
\put(50,2){\circle*{1}}
\put(53,2){\circle*{1}}
\put(56,2){\circle*{1}}
\put(61,2){\line(1,0){13}}
\put(75,2){\circle{2}}
\put(72,5){$\psi_{n-1}$}
\put(76,2.5){\line(1,0){23}}
\put(76,1.5){\line(1,0){23}}
\put(100,2){\circle*{2}}
\put(97,5){$\psi_n$}
\end{picture}
Then $\beta_1 = \psi_1 + 2(\psi_2 + \dots + \psi_n)$, $\beta_2 = \psi_1$,
$\beta_3 = \psi_3 + 2(\psi_4 + \dots + \psi_n)$, $\beta_4 = \psi_3$, etc.
If $r$ is even, $\beta_r = \psi_{r-1}$, then $\Delta^+_r$ is empty and the assertion
is trivial.  Now let $r$ be odd, $\beta_r = 
\psi_r + 2(\psi_{r+1} + \dots + \psi_n)$.  
If $\alpha, \alpha' \in \Delta^+$ with $\alpha + \alpha'
= \beta_r$ then one of $\alpha$ must have form $\sum a_i\psi_i$
and $\alpha'$ must have form $\sum a'_i\psi_i$ with $a_{r+1} = 1 = a'_{r+1}$,
$a_i = 0 = a'_i$ for $i < r$, and one of $a_r, a'_r$ is $1$ while the other
is $0$.  Now we may suppose
$$
\alpha = \psi_r + \psi_{r+1} + \dots + \psi_u \text{ or } 
  \alpha = \psi_r + \psi_{r+1} + \dots + \psi_u + 2(\psi_{u+1} + \dots + \psi_n)$$
and 
$$
\alpha' = \psi_{r+1} + \dots + \psi_{u'} \text{ or } 
  \alpha' = \psi_{r+1} + \dots + \psi_{u'}  + 2(\psi_{u'+1} + \dots + \psi_n)
$$
If neither of the $2$'s appears we must have 
$$
\alpha = \psi_r + \psi_{r+1} + \dots + \psi_n \text{ and }
\alpha' = \psi_{r+1} + \dots + \psi_n.
$$
The coefficient $2$ cannot occur in both $\alpha$ and $\alpha'$
because $\psi_n$ has coefficient $2$ in $\beta_r$.  
That leaves the possibilities
$$
\alpha = \psi_r + \psi_{r+1} + \dots + \psi_u \text{ and }
\alpha' = \psi_{r+1} + \dots + \psi_{u'}  + 2(\psi_{u'+1} + \dots + \psi_n)
$$
or the same with $\alpha$ and $\alpha'$ interchanged.  In these cases
$u = u' <n$.  More generally the argument shows that if $\alpha + \alpha'$
is a root then $\alpha + \alpha' = \beta_r$.

Next suppose that $\Delta$ is of type $C$ 
\setlength{\unitlength}{.5 mm}
\begin{picture}(120,12)
\put(5,2){\circle*{2}}
\put(2,5){$\psi_1$}
\put(6,2){\line(1,0){23}}
\put(30,2){\circle*{2}}
\put(27,5){$\psi_2$}
\put(31,2){\line(1,0){13}}
\put(50,2){\circle*{1}}
\put(53,2){\circle*{1}}
\put(56,2){\circle*{1}}
\put(61,2){\line(1,0){13}}
\put(75,2){\circle*{2}}
\put(72,5){$\psi_{n-1}$}
\put(76,2.5){\line(1,0){23}}
\put(76,1.5){\line(1,0){23}}
\put(100,2){\circle{2}}
\put(97,5){$\psi_n$}
\end{picture}
Then $\beta_1 = 2(\psi_1 + \dots \psi_{n-1} + \psi_n$,
$\beta_2 = 2(\psi_2 + \dots \psi_{n-1} + \psi_n$, etc.
If $\alpha, \alpha' \in \Delta^+_r$ with $\alpha + \alpha' = \beta_r$ then
one of $\alpha$ must have form $\sum a_i\psi_i$
and $\alpha'$ must have form $\sum a'_i\psi_i$ with $a_r = 1 = a'_r$,
$a_i = 0 = a'_i$ for $i < r$, and one of $a_n, a'_n$ is $1$ while the other
is $0$.  Now we may suppose
$$
\alpha = \psi_r + \psi_{r+1} + \dots + \psi_u \text{ or }
  \alpha = \psi_r + \psi_{r+1} + \dots + \psi_u + 2(\psi_{u+1} + \dots + 
      \psi_{n-1}) + \psi_n
$$
and
$$
\alpha' = \psi_r + \psi_{r+1} + \dots + \psi_{u'} \text{ or }
  \alpha' = \psi_r + \dots + \psi_{u'}  + 2(\psi_{u'+1} + \dots + 
  \psi_{n-1}) + \psi_n
$$
If the coefficient $2$ appears in neither then 
$$
\alpha = \psi_r + \psi_{r+1} + \dots + \psi_u \text{ and }
\alpha' = \psi_r + \psi_{r+1} + \dots + \psi_{u'}
$$
with one of $u, u'$ equal to $n$ and the other equal to $n-1$.  The coefficient
$2$ cannot appear in both because $\psi_n$ has coefficient $1$ in $\beta_r$.
So now, interchanging $\alpha$ and $\alpha'$ if necessary,
$$
\alpha = \psi_r + \psi_{r+1} + \dots + \psi_u \text{ and }
  \alpha' = \psi_r + \dots + \psi_{u'}  + 2(\psi_{u'+1} + \dots + 
  \psi_{n-1}) + \psi_n.
$$
For their sum to be a root we must have $u = u' < r$.
\smallskip

Now suppose that $\Delta$ is of type $D$ 
\setlength{\unitlength}{.35 mm}
\begin{picture}(110,20)
\put(5,9){\circle{2}}
\put(2,12){$\psi_1$}
\put(6,9){\line(1,0){23}}
\put(30,9){\circle{2}}
\put(27,12){$\psi_2$}
\put(31,9){\line(1,0){13}}
\put(50,9){\circle*{1}}
\put(53,9){\circle*{1}}
\put(56,9){\circle*{1}}
\put(61,9){\line(1,0){13}}
\put(75,9){\circle{2}}
\put(70,12){$\psi_{n-2}$}
\put(76,8.5){\line(2,-1){13}}
\put(90,2){\circle{2}}
\put(93,0){$\psi_n$}
\put(76,9.5){\line(2,1){13}}
\put(90,16){\circle{2}}
\put(93,14){$\psi_{n-1}$}
\end{picture}
Then $\beta_1 = \psi_1 + 2(\psi_2 + \dots + \psi_{n-2}) + \psi_{n-1} + \psi_n$,
$\beta_2 = \psi_1$, 
$\beta_3 = \psi_3 + 2(\psi_4 + \dots + \psi_{n-2}) + \psi_{n-1} + \psi_n$,
$\beta_4 = \psi_3$, etc, as for type $B$.  If $r$ is even then $\beta_r = 
\psi_{r-1}$ and $\Delta^+_r$ is empty.  Now suppose that $r$ is odd so
$\beta_r = \psi_r + 2(\psi_{r+1} + \dots + \psi_{n-2}) + \psi_{n-1} + \psi_n$.
If $\alpha, \alpha' \in \Delta^+_r$ with $\alpha + \alpha' = \beta_r$ now
$\alpha = \sum a_i\psi_i$ and $\alpha' = \sum a'_i\psi_i$ with
$a_i = 0 = a'_i$ for $i < r$, $a_{r+1} = 1 = a'_{r+1}$, and for 
$t \in \{r, n-1, n\}$ one of $a_t, a'_t$ is $0$ and the other is $1$.  
We may and do suppose that $a_n = 0$ and $a'_n = 1$.   Then 
$$
\alpha = a_r\psi_r + (\psi_{r+1} + \dots + \psi_u) \text{ with }  
	r+1 \leqq u \leqq n-1
$$
and
$$
\alpha' =  (1-a_r)\psi_r + (\psi_{r+1} + \dots + \psi_u) + 
  2(\psi_{u+1} + \dots + \psi_{n-2}) + (1-a_{n-1})\psi_{n-1} + \psi_n
$$
If $u = n-1$ then $1 - a_{n-1} = 0$ and the $\alpha'$ just above is not 
a root, so $r+1 \leqq u \leqq n-2$.  In particular $\alpha + \alpha'
\in \Delta^+$ implies $\alpha + \alpha' = \beta_r$.
\medskip

We have finished with the classical structures and go on to the 
exceptional ones.
\medskip

Suppose that $\Delta$ is of type $G_2$ 
\setlength{\unitlength}{.75 mm}
\begin{picture}(30,10)
\put(10,3){\circle{2}}
\put(8,-1){$\psi_1$}
\put(11,2){\line(1,0){13}}
\put(11,3){\line(1,0){13}}
\put(11,4){\line(1,0){13}}
\put(16,2){$\langle$}
\put(25,3){\circle{2}}
\put(23,-2){$\psi_2$}
\end{picture}.
Then $\beta_1 = 3\psi_1 + 2\psi_2$ and $\beta_2 = \psi_1$.  So
$\Delta^+_1 = \{\{3\psi_1 + \psi_2,\, \psi_2\},\, \{2\psi_1 + \psi_2,\, 
\psi_1 + \Psi_2\}\}$ in pairs $\{\alpha, \alpha'\}$ with sum $\beta_1$, 
and $\Delta^+_2$ is empty.  The assertions of Lemma \ref{layers-nilpotent}
follow by inspection.
\medskip

Suppose that $\Delta$ is of type $F_4$
\setlength{\unitlength}{.75 mm}
\begin{picture}(60,13)
\put(10,3){\circle{2}}
\put(8,-2){$\psi_1$}
\put(11,3){\line(1,0){13}}
\put(25,3){\circle{2}}
\put(23,-2){$\psi_2$}
\put(26,2.5){\line(1,0){13}}
\put(26,3.5){\line(1,0){13}}
\put(31,2){$\rangle$}
\put(40,3){\circle{2}}
\put(38,-2){$\psi_3$}
\put(41,3){\line(1,0){13}}
\put(55,3){\circle{2}}
\put(53,-2){$\psi_4$}
\end{picture}.
Then $\beta_1 = 2\psi_1 + 3\psi_2 + 4\psi_4 + 2\psi_4$,\, 
$\beta_2 = \psi_2 + 2\psi_3 + 2\psi_4$,\, $\beta_3 = \psi_2 + 2\psi_3$ and
$\beta_4 = \psi_2$. Thus (in pairs $\{\alpha, \alpha'\}$ with sum $\beta_1$)
$$
\begin{aligned}
\Delta^+_1 = \{&\{\psi_1,\, \psi_1 + 3\psi_2 + 4\psi_3 + 2\psi_4\},\,
	\{\psi_1 + \psi_2,\, \psi_1 + 2\psi_2 + 4\psi_3 + 2\psi_4\},\\
&\{\psi_1 + \psi_2 + \psi_3,\, \psi_1 + 2\psi_2 + 3\psi_3 + 2\psi_4\}, \,
\{\psi_1 + \psi_2 + 2\psi_3,\,  \psi_1 + 2\psi_2 + 2\psi_3 + 2\psi_4\}, \\
&\{\psi_1 + \psi_2 + \psi_3 + \psi_4,\, \psi_1 + 2\psi_2 + 3\psi_3 + \psi_4\},\,
\{\psi_1 + 2\psi_2 + 2\psi_3,\, \psi_1 + \psi_2 + 2\psi_3 + 2\psi_4\},\\
&\{\psi_1 + \psi_2 + 2\psi_3 + \psi_4,\,\psi_1 + 2\psi_2 + 2\psi_3 + \psi_4\}\}.
\end{aligned}
$$
Similarly,
$$
\Delta^+_2 = \{\{\psi_4,\, \psi_2 + 2\psi_3 + \psi_4\},  
      \{\psi_3 + \psi_4,\, \psi_2 + \psi_3 + \psi_4\}\},\,
\Delta^+_3 = \{\psi_3,\, \psi_2 + \psi_3\}\, \text{ and } \Delta^+_4 = \emptyset
$$
In each case if $\alpha, \alpha' \in \Delta^+_r$ and $\alpha + \alpha'$ is a root
then $\alpha + \alpha' = \beta_r$.  The assertions of Lemma 
\ref{layers-nilpotent} follow by inspection.
\medskip

Suppose that $\Delta$ is of type $E_6$ 
\setlength{\unitlength}{.35 mm}
\begin{picture}(65,23)
\put(10,12){\circle{2}}
\put(8,15){$\psi_1$}
\put(11,12){\line(1,0){13}}
\put(25,12){\circle{2}}
\put(23,15){$\psi_3$}
\put(26,12){\line(1,0){13}}
\put(40,12){\circle{2}}
\put(38,15){$\psi_4$}
\put(41,12){\line(1,0){13}}
\put(55,12){\circle{2}}
\put(53,15){$\psi_5$}
\put(56,12){\line(1,0){13}}
\put(70,12){\circle{2}}
\put(68,15){$\psi_6$}
\put(40,11){\line(0,-1){13}}
\put(40,-3){\circle{2}}
\put(43,-3){$\psi_2$}
\end{picture}.
Then the strongly orthogonal roots $\beta_i$ are given by 
$$
\begin{aligned}
\beta_1 &= \psi_1 + 2\psi_2 + 2\psi_3 + 3\psi_4 + 2\psi_5 + \psi_6,\\
\beta_2 &= \psi_1 + \psi_3 + \psi_4 + \psi_5 + \psi_6,\\ 
\beta_3 &= \psi_3 + \psi_4 + \psi_5, \text{ and }\\ 
\beta_4 &= \psi_4.  
\end{aligned}
$$
Now
$$
\begin{aligned}
\Delta^+_1 &= \{ \{\psi_2, \psi_1 + \psi_2 + 2\psi_2 + 3\psi_4 + 2\psi_5 + \psi_6\},\\
         &\{\psi_2 + \psi_4, \psi_1 + \psi_2 + 2\psi_3 + 2\psi_4 + 2\psi_5 
		+ \psi_6\},\\
         &\{\psi_2 + \psi_3 + \psi_4, \psi_1 + \psi_2 + \psi_3 + 2\psi_4 + 
		2\psi_5 + \psi_6\},\\
         &\{\psi_2 + \psi_4 + \psi_5, \psi_1 + \psi_2 + 2\psi_3 + 2\psi_4 + 
		\psi_5 + \psi_6\},\\
         &\{\psi_1 + \psi_2 + \psi_3 + \psi_4, \psi_2 + \psi_3 + 2\psi_4 + 
		2\psi_5 + \psi_6\},\\
         &\{\psi_2 + \psi_3 + \psi_4 + \psi_5, \psi_1 + \psi_2 + \psi_3 + 
		2\psi_4 + \psi_5 + \psi_6\},\\
         &\{\psi_2 + \psi_4 + \psi_5 + \psi_6, \psi_1 + \psi_2 + 2\psi_3 + 
		2\psi_4 + \psi_5\},\\
         &\{\psi_1 + \psi_2 + \psi_3 + \psi_4 + \psi_5, \psi_2 + \psi_3 + 
		2\psi_4 + \psi_5 + \psi_6\},\\
         &\{\psi_2 + \psi_3 + 2\psi_4 + \psi_5, \psi_1 + \psi_2 + \psi_3 + 
		\psi_4 + \psi_5 + \psi_6\},\\
         &\{\psi_2 + \psi_3 + \psi_4 + \psi_5 + \psi_6,  \psi_1 + \psi_2 + 
		\psi_3 + 2\psi_4 + \psi_5\} \}
\end{aligned}
$$ 
and
$$
\begin{aligned}
\Delta^+_2 = \{ &\{\psi_1, \psi_3 + \psi_4 + \psi_5 + \psi_6;
          \psi_6, \psi_1 + \psi_3 + \psi_4 + \psi_5\}; \\
          &\{\psi_1 + \psi_3, \psi_4 + \psi_5 + \psi_6,
          \psi_5 + \psi_6, \psi_1 + \psi_3 + \psi_4\} \}
\end{aligned}
$$
while
$$
\Delta^+_3 = \{\{\psi_3, \psi_4 + \psi_5\}, \{\psi_5, \psi_3 + \psi_4\}\}
\text{ and } \Delta^+_4 = \emptyset.
$$
In each case if $\alpha, \alpha' \in \Delta^+_r$ and $\alpha + \alpha'$ is a root
then $\alpha + \alpha' = \beta_r$.  The assertions of Lemma 
\ref{layers-nilpotent} follow by inspection..
\medskip

Suppose that $\Delta$ is of type $E_7$  
\setlength{\unitlength}{.35 mm}
\begin{picture}(100,23)
\put(10,12){\circle{2}}
\put(8,15){$\psi_1$}
\put(11,12){\line(1,0){13}}
\put(25,12){\circle{2}}
\put(23,15){$\psi_3$}
\put(26,12){\line(1,0){13}}
\put(40,12){\circle{2}}
\put(38,15){$\psi_4$}
\put(41,12){\line(1,0){13}}
\put(55,12){\circle{2}}
\put(53,15){$\psi_5$}
\put(56,12){\line(1,0){13}}
\put(70,12){\circle{2}}
\put(68,15){$\psi_6$}
\put(71,12){\line(1,0){13}}
\put(85,12){\circle{2}}
\put(83,15){$\psi_7$}
\put(40,11){\line(0,-1){13}}
\put(40,-3){\circle{2}}
\put(43,-3){$\psi_2$}
\end{picture}.
Then the strongly orthogonal roots $\beta_i$ are
$$
\begin{aligned}
&\beta_1 = 2\psi_1 + 2\psi_2 + 3\psi_3 + 4\psi_4 + 3\psi_5 + 2\psi_6 + \psi_7,\\
&\beta_2 = \psi_2 + \psi_3 + 2\psi_4 + 2\psi_5 + 2\psi_6 + \psi_7,\\
&\beta_3 = \psi_7,\\
&\beta_4 = \psi_2 + \psi_3 + 2\psi_4 + \psi_5,\\
&\beta_5 = \psi_2,\\
&\beta_6 = \psi_3 \text{ and } \\
&\beta_7 = \psi_5.  
\end{aligned}
$$
Now
$$
\begin{aligned}
\Delta^+_1 = \{&\{ \psi_1,\, 
	\psi_1 + 2\psi_2 + 3\psi_3 + 4\psi_4 + 3\psi_5 + 2\psi_6 + \psi_7\},\\
        &\{ \psi_1 + \psi_3,\,
	\psi_1 + 2\psi_2 + 2\psi_3 + 4\psi_4 + 3\psi_5 + 2\psi_6 + \psi_7\},\\
        &\{ \psi_1 + \psi_3 + \psi_4,\,
	 \psi_1 + 2\psi_2 + 2\psi_3 + 3\psi_4 + 3\psi_5 + 2\psi_6 + \psi_7\},\\
        &\{ \psi_1 + \psi_2 + \psi_3 + \psi_4,\,
	 \psi_1 + \psi_2 + 2\psi_3 + 3\psi_4 + 3\psi_5 + 2\psi_6 + \psi_7\},\\
        &\{ \psi_1 + \psi_3 + \psi_4 + \psi_5,\,
	 \psi_1 + 2\psi_2 + 2\psi_3 + 3\psi_4 + 2\psi_5 + 2\psi_6 + \psi_7\},\\
        &\{ \psi_1 + \psi_2 + \psi_3 + \psi_4 + \psi_5,\,
	 \psi_1 + \psi_2 + 2\psi_3 + 3\psi_4 + 2\psi_5 + 2\psi_6 + \psi_7\},\\
        &\{ \psi_1 + \psi_3 + \psi_4 + \psi_5 + \psi_6,\,
	 \psi_1 + 2\psi_2 + 2\psi_3 + 3\psi_4 + 2\psi_5 + \psi_6 + \psi_7\},\\
        &\{ \psi_1 + \psi_2 + \psi_3 + 2\psi_4 + \psi_5,\,
	 \psi_1 + \psi_2 + 2\psi_3 + 2\psi_4 + 2\psi_5 + 2\psi_6 + \psi_7\},\\
        &\{ \psi_1 + \psi_2 + \psi_3 + \psi_4 + \psi_5 + \psi_6,\,
	 \psi_1 + \psi_2 + 2\psi_3 + 3\psi_4 + 2\psi_5 + \psi_6 + \psi_7\},\\
        &\{ \psi_1 + \psi_3 + \psi_4 + \psi_5 + \psi_6 + \psi_7,\,
	 \psi_1 + 2\psi_2 + 2\psi_3 + 3\psi_4 + 2\psi_5 + \psi_6\},\\
        &\{ \psi_1 + \psi_2 + 2\psi_3 + 2\psi_4 + \psi_5,\,
	 \psi_1 + \psi_2 + \psi_3 + 2\psi_4 + 2\psi_5 + 2\psi_6 + \psi_7\},\\
        &\{ \psi_1 + \psi_2 + \psi_3 + 2\psi_4 + \psi_5 + \psi_6,\,
	 \psi_1 + \psi_2 + 2\psi_3 + 2\psi_4 + 2\psi_5 + \psi_6 + \psi_7\},\\
        &\{ \psi_1 + \psi_2 + \psi_3 + \psi_4 + \psi_5 + \psi_6 + \psi_7,\,
	 \psi_1 + \psi_2 + 2\psi_3 + 3\psi_4 + 2\psi_5 + \psi_6\},\\
        &\{ \psi_1 + \psi_2 + 2\psi_3 + 2\psi_4 + \psi_5 + \psi_6,\,
	 \psi_1 + \psi_2 + \psi_3 + 2\psi_4 + 2\psi_5 + \psi_6 + \psi_7\},\\
        &\{ \psi_1 + \psi_2 + \psi_3 + 2\psi_4 + 2\psi_5 + \psi_6,\,
	 \psi_1 + \psi_2 + 2\psi_3 + 2\psi_4 + \psi_5 + \psi_6 + \psi_7\},\\
        &\{ \psi_1 + \psi_2 + \psi_3 + 2\psi_4 + \psi_5 + \psi_6 + \psi_7,\,
	 \psi_1 + \psi_2 + 2\psi_3 + 2\psi_4 + 2\psi_5 + \psi_6\}\},
\end{aligned}
$$
and
$$
\begin{aligned}
\Delta^+_2 = \{&\{\psi_6,
	     \psi_2 + \psi_3 + 2\psi_4 + 2\psi_5 + \psi_6 + \psi_7\},\,
        \{ \psi_5 + \psi_6,
	     \psi_2 + \psi_3 + 2\psi_4 + \psi_5 + \psi_6 + \psi_7\},\\
        &\{\{psi_6 + \psi_7,
	     \psi_2 + \psi_3 + 2\psi_4 + 2\psi_5 + \psi_6;\},\,
        \{ \psi_4 + \psi_5 + \psi_6,
	     \psi_2 + \psi_3 + \psi_4 + \psi_5 + \psi_6 + \psi_7\}, \\
        &\{\psi_5 + \psi_6 + \psi_7,
	     \psi_2 + \psi_3 + 2\psi_4 + \psi_5 + \psi_6;\},\,
        \{ \psi_2 + \psi_4 + \psi_5 + \psi_6,
	     \psi_3 + \psi_4 + \psi_5 + \psi_6 + \psi_7\},\\
        &\{\psi_3 + \psi_4 + \psi_5 + \psi_6,
	     \psi_2 + \psi_4 + \psi_5 + \psi_6 + \psi_7\},\,
        \{ \psi_4 + \psi_5 + \psi_6 + \psi_7,
	     \psi_2 + \psi_3 + \psi_4 + \psi_5 + \psi_6\} \}
\end{aligned}
$$
while
$$
\begin{aligned}
\Delta^+_4 = \{&\{\psi_4, \psi_2 + \psi_3 + \psi_4 + \psi_5\},\,
        \{\psi_2 + \psi_4, \psi_3 + \psi_4 + \psi_5\},\\
        &\{\psi_3 + \psi_4, \psi_2 + \psi_4 + \psi_5\},\,
        \{\psi_4 + \psi_5, \psi_2 + \psi_3 + \psi_4\}\}
\end{aligned}
$$
and 
$$
\Delta^+_3 = \Delta^+_5 = \Delta^+_6 = \Delta^+_7 = \emptyset.
$$
In each case if $\alpha, \alpha' \in \Delta^+_r$ and $\alpha + \alpha'$ is a root
then $\alpha + \alpha' = \beta_r$.  The assertions of Lemma 
\ref{layers-nilpotent} follow by inspection.
\medskip

Finally suppose that $\Delta$ is of type $E_8$
\setlength{\unitlength}{.35 mm}
\begin{picture}(110,23)
\put(10,12){\circle{2}}
\put(8,15){$\psi_1$}
\put(11,12){\line(1,0){13}}
\put(25,12){\circle{2}}
\put(23,15){$\psi_3$}
\put(26,12){\line(1,0){13}}
\put(40,12){\circle{2}}
\put(38,15){$\psi_4$}
\put(41,12){\line(1,0){13}}
\put(55,12){\circle{2}}
\put(53,15){$\psi_5$}
\put(56,12){\line(1,0){13}}
\put(70,12){\circle{2}}
\put(68,15){$\psi_6$}
\put(71,12){\line(1,0){13}}
\put(85,12){\circle{2}}
\put(83,15){$\psi_7$}
\put(86,12){\line(1,0){13}}
\put(100,12){\circle{2}}
\put(98,15){$\psi_8$}
\put(40,11){\line(0,-1){13}}
\put(40,-3){\circle{2}}
\put(43,-3){$\psi_2$}
\end{picture}.
Then 
$$
\begin{aligned}
&\beta_1 = 2\psi_1 + 3\psi_2 + 4\psi_3 + 6\psi_4 + 5\psi_5
	+ 4\psi_6 + 3\psi_7 + 2\psi_8,\\
&\beta_2 = 2\psi_1 + 2\psi_2 + 3\psi_3 + 4\psi_4 + 3\psi_5 + 2\psi_6 + \psi_7,\\
&\beta_3 = \psi_2 + \psi_3 + 2\psi_4 + 2\psi_5 + 2\psi_6 + \psi_7,\\
&\beta_4 = \psi_7,\\
&\beta_5 = \psi_2 + \psi_3 + 2\psi_4 + \psi_5,\\
&\beta_6 = \psi_2, \\ 
&\beta_7 = \psi_3 \text{ and }\\
&\beta_8 = \psi_5.
\end{aligned}
$$
Now
$$
\begin{aligned}
\Delta^+_1 = \{&\{\psi_8 ,\phantom{X}
		2\psi_1+3\psi_2+4\psi_3+6\psi_4+5\psi_5+4\psi_6+3\psi_7+\psi_8\},\\
        &\{\psi_7+\psi_8 ,\phantom{X}
		2\psi_1+3\psi_2+4\psi_3+6\psi_4+5\psi_5+4\psi_6+2\psi_7+\psi_8\},\\
        &\{\psi_6+\psi_7+\psi_8 ,\phantom{X}
		2\psi_1+3\psi_2+4\psi_3+6\psi_4+5\psi_5+3\psi_6+2\psi_7+\psi_8\},\\
	&\{\psi_5+\psi_6+\psi_7+\psi_8 ,\phantom{X}
		2\psi_1+3\psi_2+4\psi_3+6\psi_4+4\psi_5+3\psi_6+2\psi_7+\psi_8\},\\
	&\{\psi_4+\psi_5+\psi_6+\psi_7+\psi_8 ,\phantom{X}
		2\psi_1+3\psi_2+4\psi_3+5\psi_4+4\psi_5+3\psi_6+2\psi_7+\psi_8\},\\
        &\{\psi_2+\psi_4+\psi_5+\psi_6+\psi_7+\psi_8 ,\phantom{X}
		2\psi_1+2\psi_2+4\psi_3+5\psi_4+4\psi_5+3\psi_6+2\psi_7+\psi_8\},\\
        &\{\psi_3+\psi_4+\psi_5+\psi_6+\psi_7+\psi_8 ,\phantom{X}
		2\psi_1+3\psi_2+3\psi_3+5\psi_4+4\psi_5+3\psi_6+2\psi_7+\psi_8\},\\
        &\{\psi_1+\psi_3+\psi_4+\psi_5+\psi_6+\psi_7+\psi_8 ,\phantom{X}
		\psi_1+3\psi_2+3\psi_3+5\psi_4+4\psi_5+3\psi_6+2\psi_7+\psi_8\},\\
        &\{\psi_2+\psi_3+\psi_4+\psi_5+\psi_6+\psi_7+\psi_8 ,\phantom{X}
		2\psi_1+2\psi_2+3\psi_3+5\psi_4+4\psi_5+3\psi_6+2\psi_7+\psi_8\},\\
        &\{\psi_1+\psi_2+\psi_3+\psi_4+\psi_5+\psi_6+\psi_7+\psi_8 ,\phantom{X}
		\psi_1+2\psi_2+3\psi_3+5\psi_4+4\psi_5+3\psi_6+2\psi_7+\psi_8\},\\
	&\{\psi_2+\psi_3+2\psi_4+\psi_5+\psi_6+\psi_7+\psi_8 ,\phantom{X}
		2\psi_1+2\psi_2+3\psi_3+4\psi_4+4\psi_5+3\psi_6+2\psi_7+\psi_8\},\\
	&\{\psi_1+\psi_2+\psi_3+2\psi_4+\psi_5+\psi_6+\psi_7+\psi_8 ,\phantom{X}
		\psi_1+2\psi_2+3\psi_3+4\psi_4+4\psi_5+3\psi_6+2\psi_7+\psi_8\},\\
	&\{\psi_2+\psi_3+2\psi_4+2\psi_5+\psi_6+\psi_7+\psi_8 ,\phantom{X}
		2\psi_1+2\psi_2+3\psi_3+4\psi_4+3\psi_5+3\psi_6+2\psi_7+\psi_8\},\\
	&\{\psi_1+\psi_2+2\psi_3+2\psi_4+\psi_5+\psi_6+\psi_7+\psi_8 ,\phantom{X}
		\psi_1+2\psi_2+2\psi_3+4\psi_4+4\psi_5+3\psi_6+2\psi_7+\psi_8\},\\
	&\{\psi_1+\psi_2+\psi_3+2\psi_4+2\psi_5+\psi_6+\psi_7+\psi_8 ,\phantom{X}
		\psi_1+2\psi_2+3\psi_3+4\psi_4+3\psi_5+3\psi_6+2\psi_7+\psi_8\},\\
	&\{\psi_2+\psi_3+2\psi_4+2\psi_5+2\psi_6+\psi_7+\psi_8 ,\phantom{X}
		2\psi_1+2\psi_2+3\psi_3+4\psi_4+3\psi_5+2\psi_6+2\psi_7+\psi_8\},\\
	&\{\psi_1+\psi_2+2\psi_3+2\psi_4+2\psi_5+\psi_6+\psi_7+\psi_8 ,\phantom{X}
		\psi_1+2\psi_2+2\psi_3+4\psi_4+3\psi_5+3\psi_6+2\psi_7+\psi_8\},\\
	&\{\psi_1+\psi_2+\psi_3+2\psi_4+2\psi_5+2\psi_6+\psi_7+\psi_8 ,\phantom{X}
		\psi_1+2\psi_2+3\psi_3+4\psi_4+3\psi_5+2\psi_6+2\psi_7+\psi_8\},\\
	&\{\psi_2+\psi_3+2\psi_4+2\psi_5+2\psi_6+2\psi_7+\psi_8 ,\phantom{X}
		2\psi_1+2\psi_2+3\psi_3+4\psi_4+3\psi_5+2\psi_6+\psi_7+\psi_8\},\\
	&\{ \psi_1+\psi_2+2\psi_3+3\psi_4+2\psi_5+\psi_6+\psi_7+\psi_8 ,\phantom{X}
		\psi_1+2\psi_2+2\psi_3+3\psi_4+3\psi_5+3\psi_6+2\psi_7+\psi_8\},\\
	&\{ \psi_1+\psi_2+2\psi_3+2\psi_4+2\psi_5+2\psi_6+\psi_7+\psi_8 ,\phantom{X}
		\psi_1+2\psi_2+2\psi_3+4\psi_4+3\psi_5+2\psi_6+2\psi_7+\psi_8\},\\
	&\{ \psi_1+\psi_2+\psi_3+2\psi_4+2\psi_5+2\psi_6+2\psi_7+\psi_8 ,\phantom{X}
		\psi_1+2\psi_2+3\psi_3+4\psi_4+3\psi_5+2\psi_6+\psi_7+\psi_8\},\\
	&\{ \psi_1+2\psi_2+2\psi_3+3\psi_4+2\psi_5+\psi_6+\psi_7+\psi_8 ,\phantom{X}
		\psi_1+\psi_2+2\psi_3+3\psi_4+3\psi_5+3\psi_6+2\psi_7+\psi_8\},\\
	&\{ \psi_1+\psi_2+2\psi_3+3\psi_4+2\psi_5+2\psi_6+\psi_7+\psi_8 ,\phantom{X}
		\psi_1+2\psi_2+2\psi_3+3\psi_4+3\psi_5+2\psi_6+2\psi_7+\psi_8\},\\
	&\{ \psi_1+\psi_2+2\psi_3+2\psi_4+2\psi_5+2\psi_6+2\psi_7+\psi_8 ,\phantom{X}
		\psi_1+2\psi_2+2\psi_3+4\psi_4+3\psi_5+2\psi_6+\psi_7+\psi_8\},\\
	&\{ \psi_1+2\psi_2+2\psi_3+3\psi_4+2\psi_5+2\psi_6+\psi_7+\psi_8 ,\phantom{X}
		\psi_1+\psi_2+2\psi_3+3\psi_4+3\psi_5+2\psi_6+2\psi_7+\psi_8\},\\
	&\{ \psi_1+\psi_2+2\psi_3+3\psi_4+3\psi_5+2\psi_6+\psi_7+\psi_8 ,\phantom{X}
		\psi_1+2\psi_2+2\psi_3+3\psi_4+2\psi_5+2\psi_6+2\psi_7+\psi_8\},\\
	&\{ \psi_1+\psi_2+2\psi_3+3\psi_4+2\psi_5+2\psi_6+2\psi_7+\psi_8 ,\phantom{X}
		\psi_1+2\psi_2+2\psi_3+3\psi_4+3\psi_5+2\psi_6+\psi_7+\psi_8 \}\}
\end{aligned}
$$
while
$$
\begin{aligned}
\Delta^+_2 = \{&\{ \psi_1, \phantom{X}
		\psi_1+2\psi_2+3\psi_3+4\psi_4+3\psi_5+2\psi_6+\psi_7\},\\
	&\{ \psi_1+\psi_3, \phantom{X}
		\psi_1+2\psi_2+2\psi_3+4\psi_4+3\psi_5+2\psi_6+\psi_7\},\\
	&\{ \psi_1+\psi_3+\psi_4, \phantom{X}
		\psi_1+2\psi_2+2\psi_3+3\psi_4+3\psi_5+2\psi_6+\psi_7\},\\
	&\{ \psi_1+\psi_2+\psi_3+\psi_4, \phantom{X}
		\psi_1+\psi_2+2\psi_3+3\psi_4+3\psi_5+2\psi_6+\psi_7\},\\
	&\{ \psi_1+\psi_3+\psi_4+\psi_5, \phantom{X}
		\psi_1+2\psi_2+2\psi_3+3\psi_4+2\psi_5+2\psi_6+\psi_7\},\\
	&\{ \psi_1+\psi_2+\psi_3+\psi_4+\psi_5, \phantom{X}
		\psi_1+\psi_2+2\psi_3+3\psi_4+2\psi_5+2\psi_6+\psi_7\},\\
	&\{ \psi_1+\psi_3+\psi_4+\psi_5+\psi_6, \phantom{X}
		\psi_1+2\psi_2+2\psi_3+3\psi_4+2\psi_5+\psi_6+\psi_7\},\\
	&\{ \psi_1+\psi_2+\psi_3+2\psi_4+\psi_5, \phantom{X}
		\psi_1+\psi_2+2\psi_3+2\psi_4+2\psi_5+2\psi_6+\psi_7\},\\
	&\{ \psi_1+\psi_2+\psi_3+\psi_4+\psi_5+\psi_6, \phantom{X}
		\psi_1+\psi_2+2\psi_3+3\psi_4+2\psi_5+\psi_6+\psi_7\},\\
	&\{ \psi_1+\psi_3+\psi_4+\psi_5+\psi_6+\psi_7, \phantom{X}
		\psi_1+2\psi_2+2\psi_3+3\psi_4+2\psi_5+\psi_6\},\\
	&\{ \psi_1+\psi_2+2\psi_3+2\psi_4+\psi_5, \phantom{X}
		\psi_1+\psi_2+\psi_3+2\psi_4+2\psi_5+2\psi_6+\psi_7\},\\
	&\{ \psi_1+\psi_2+\psi_3+2\psi_4+\psi_5+\psi_6, \phantom{X}
		\psi_1+\psi_2+2\psi_3+2\psi_4+2\psi_5+\psi_6+\psi_7\},\\
	&\{ \psi_1+\psi_2+\psi_3+\psi_4+\psi_5+\psi_6+\psi_7, \phantom{X}
		\psi_1+\psi_2+2\psi_3+3\psi_4+2\psi_5+\psi_6\},\\
	&\{ \psi_1+\psi_2+2\psi_3+2\psi_4+\psi_5+\psi_6, \phantom{X}
		\psi_1+\psi_2+\psi_3+2\psi_4+2\psi_5+\psi_6+\psi_7\},\\
	&\{ \psi_1+\psi_2+\psi_3+2\psi_4+2\psi_5+\psi_6, \phantom{X}
		\psi_1+\psi_2+2\psi_3+2\psi_4+\psi_5+\psi_6+\psi_7\},\\
	&\{ \psi_1+\psi_2+\psi_3+2\psi_4+\psi_5+\psi_6+\psi_7, \phantom{X}
		\psi_1+\psi_2+2\psi_3+2\psi_4+2\psi_5+\psi_6 \}\},
\end{aligned}
$$
and
$$
\begin{aligned}
\Delta^+_3 = \{&\{ \psi_6,\,   \psi_2+\psi_3+2\psi_4+2\psi_5+\psi_6+\psi_7 \}, \\
  &\{ \psi_5+\psi_6 ,\,  \psi_2+\psi_3+2\psi_4+\psi_5+\psi_6+\psi_7 \}, \\
  &\{ \psi_6+\psi_7 ,\,  \psi_2+\psi_3+2\psi_4+2\psi_5+\psi_6 \}, \\
  &\{ \psi_4+\psi_5+\psi_6 ,\,  \psi_2+\psi_3+\psi_4+\psi_5+\psi_6+\psi_7 \}, \\
  &\{ \psi_5+\psi_6+\psi_7 ,\,  \psi_2+\psi_3+2\psi_4+\psi_5+\psi_6 \}, \\
  &\{ \psi_2+\psi_4+\psi_5+\psi_6 ,\,  \psi_3+\psi_4+\psi_5+\psi_6+\psi_7 \}, \\
  &\{ \psi_3+\psi_4+\psi_5+\psi_6 ,\,  \psi_2+\psi_4+\psi_5+\psi_6+\psi_7 \}, \\
  &\{ \psi_4+\psi_5+\psi_6+\psi_7 ,\,  \psi_2+\psi_3+\psi_4+\psi_5+\psi_6 \}\},
\end{aligned}
$$
and
$$
\begin{aligned}
\Delta^+_5 = \{&\{ \psi_4,\,           \psi_2+\psi_3+\psi_4+\psi_5\}, \\
        &\{ \psi_2+\psi_4,\,    \psi_3+\psi_4+\psi_5\}, \\
        &\{ \psi_3+\psi_4,\,    \psi_2+\psi_4+\psi_5\}, \\
        &\{ \psi_4+\psi_5,\,    \psi_2+\psi_3+\psi_4\} \},
\end{aligned}
$$
while
$$
\Delta^+_4 = \Delta^+_6 = \Delta^+_7 = \Delta^+_8 = \emptyset .
$$
This completes a direct computational verification of
the assertions of Lemma \ref{layers-nilpotent}, as opposed to the
more direct structural argument in Section \ref{iwasawa}.

\enddocument
\end